\theoremstyle{plain}
\newtheorem*{bigtheo}{Theorem}
\newtheorem{theo}{Theorem}[section]
\newtheorem{prop}[theo]{Proposition}
\newtheorem{lemm}[theo]{Lemma}
\newtheorem{coro}[theo]{Corollary}
\theoremstyle{definition}
\newtheorem{defi}[theo]{Definition}
\theoremstyle{remark}
\newtheorem{rema}[theo]{Remark}
\newtheorem{ex}[theo]{Example}
\newtheorem{hypo}[theo]{Hypothesis}
\title{Partial Hasse invariants, partial degrees and the canonical subgroup}
\author{Stéphane Bijakowski}
\address{
Imperial College \\
Department of Mathematics \\
180 Queen's Gate \\
London SW7 2AZ UK}
\email{s.bijakowski@imperial.ac.uk}
\keywords{Canonical subgroup, Hasse invariants, $p$-divisible groups}
\subjclass[2010]{Primary 11F85, Secondary 11F46,11S15}
\begin{document}
\selectlanguage{english}

\begin{abstract}
If the Hasse invariant of a $p$-divisible group is small enough, then one can construct a canonical subgroup inside its $p$-torsion. We prove that, assuming the existence of a subgroup of adequate height in the $p$-torsion with high degree, the expected properties of the canonical subgroup can be easily proved, especially the relation betwwen its degree and the Hasse invariant. When one considers a $p$-divisible group with an action of the ring of integers of a (possibly ramified) finite extension of $\mathbb{Q}_p$, then much more can be said. We define partial Hasse invariants (they are natural in the unramified case, and generalize a construction of Reduzzi and Xiao in the general case), as well as partial degrees. After studying these functions, we compute the partial degrees of the canonical subgroup.
\end{abstract}

\maketitle

\tableofcontents

\section*{Introduction}

Let $p$ be a prime number. Let $K$ be a finite extension of $\mathbb{Q}_p$ and $O_K$ its ring of integers. If $A$ is an abelian scheme over $O_K$, we say that $A$ is ordinary at $p$ if the $p$-divisible group $A[p^\infty]$ is an extension of a multiplicative $p$-divisible group and an étale one. If it is the case, then there is only one subgroup of $A[p]$ of height the dimension of $A$ which is multiplicative. It lifts the kernel of the Frobenius in the special fiber. \\
\indent When $A$ is close to being ordinary at $p$, then a similar result holds. The fact that $A$ is ordinary at $p$ is equivalent to the fact that the Hasse invariant of $A$ is zero (the Hasse invariant is an element in $[0,1]$). The theory of the canonical subgroup says that if the Hasse invariant of $A$ is small enough, then one can construct a canonical subgroup inside $A[p]$, which is equal to the multiplicative part of $A[p]$ when $A$ is ordinary. This construction has been done by Katz \cite{Katz} and Lubin \cite{Lu} for the elliptic curves, and by Abbes and Mokrane (\cite{A-M}) for general abelian schemes.  \\
\indent The problem actually makes sense for a general $p$-divisible group (not necessarily attached to an abelian scheme) : one can define the Hasse invariant for a $p$-divisible group, and try to construct a canonical subgroup when the Hasse invariant is small enough. This has been done by Tian (\cite{Ti}), using global methods and resolutions of $p$-divisible groups by abelian schemes. In \cite{Fa}, a purely local construction has been made explicit. The canonical subgroup has been a very active research topic, let us mention the contributions of Andreatta-Gasbarri (\cite{A-Ga}), Conrad (\cite{Co}), Goren-Kassaei (\cite{G-K}), Hattori (\cite{Ha}) and Scholze (\cite{Sch}). \\
\indent Once the canonical subgroup has been constructed, it is important to have some extra information for it. Fargues has defined in \cite{Fa_deg} the degree of a finite flat group scheme over $O_K$. The main result of \cite{Fa} is then the construction of a canonical subgroup $C \subset G[p]$, where $G$ is a $p$-divisible group whose Hasse invariant is strictly less than $1/2$. Moreover the height of $C$ is the dimension of $G$, and the degree of the dual of $C$ is equal to the Hasse invariant.  \\
\indent We prove that the canonical subgroup is in fact characterized by these properties. Indeed, one has the following result.

\begin{bigtheo}
Let $K$ be a complete valuated extension of $\mathbb{Q}_p$, and let $G$ be a $p$-divisible group over $O_K$. Let $C$ be a finite flat subgroup of $G[p]$ whose height is the dimension of $G$. Suppose that $\deg C^D < 1/2$, where $C^D$ is the dual of $C$. Then $C$ is uniquely determined by these properties. One has the relation
$$\deg C^D = ha(G) \text{,} $$
where $ha(G)$ is the Hasse invariant of $G$. Moreover, $C$ is the kernel of the Frobenius modulo $p^{1-ha(G)}$.
\end{bigtheo}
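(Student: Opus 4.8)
The plan is to treat uniqueness as the main content and then to deduce the two remaining assertions by identifying $C$ with the canonical subgroup of Fargues \cite{Fa}. Throughout I will use the basic properties of the degree function: additivity in short exact sequences of finite flat group schemes, the bounds $0 \le \deg H \le \operatorname{ht} H$, the duality relation $\deg H + \deg H^D = \operatorname{ht} H$, the monotonicity statement that a morphism $f : H_1 \to H_2$ which is an isomorphism on generic fibres satisfies $\deg H_1 \le \deg H_2$ with equality iff $f$ is an isomorphism, and the fact that a finite flat closed subgroup is the schematic closure of its generic fibre. I will also use $\deg G[p] = \dim G = d$. From $\deg C^D < 1/2$ and the duality relation one gets at once $\deg C > d - 1/2$, i.e. $C$ has nearly maximal degree.

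For uniqueness, let $C_1, C_2 \subset G[p]$ both have height $d$ and satisfy $\deg C_i^D < 1/2$, and suppose $C_{1,K} \ne C_{2,K}$; since $K$ has characteristic $0$ these generic fibres are $\mathbb{F}_p$-subspaces of the \'etale group $G[p]_K$, so $a := \dim_{\mathbb{F}_p}(C_{1,K} \cap C_{2,K}) \le d - 1$. Let $D_1 \subset C_1$ be the schematic closure of $C_{1,K} \cap C_{2,K}$ and let $\bar C_1 \subset G[p]/C_2$ be the schematic closure of the image of $C_1$ under $\pi : G[p] \to G[p]/C_2$. The quotient $C_1/D_1$ has height $d - a$ and the induced map $C_1/D_1 \to \bar C_1$ is an isomorphism on generic fibres, so monotonicity gives $\deg(C_1/D_1) \le \deg \bar C_1$, whence by additivity
$$ \deg C_1 = \deg D_1 + \deg(C_1/D_1) \le \deg D_1 + \deg \bar C_1 . $$
Now $\deg D_1 \le \operatorname{ht} D_1 = a$, while $\bar C_1 \subseteq G[p]/C_2$ is a closed subgroup, so $\deg \bar C_1 \le \deg(G[p]/C_2) = d - \deg C_2 < 1/2$. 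Combining with $\deg C_1 > d - 1/2$ yields $d - 1/2 < a + 1/2$, i.e. $a > d - 1$, contradicting $a \le d - 1$. Hence $C_{1,K} = C_{2,K}$, and since each $C_i$ is the schematic closure of its generic fibre, $C_1 = C_2$.

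It remains to pin down $\deg C^D$ and to identify $C$ modulo $p^{1 - ha(G)}$. The strategy is to show $ha(G) < 1/2$, invoke Fargues's existence theorem to produce a subgroup $C_0 \subset G[p]$ of height $d$ with $\deg C_0^D = ha(G) < 1/2$ which is the kernel of Frobenius modulo $p^{1 - ha(G)}$, and then to conclude $C = C_0$ by the uniqueness just proved, transporting all three properties of $C_0$ to $C$. To obtain $ha(G) < 1/2$ I would establish the inequality $ha(G) \le \deg C^D$, valid for every height-$d$ subgroup $C \subset G[p]$: reducing modulo $p$, the subgroup $\bar C \subset \bar G[p]$ is compared with $\ker(F : \bar G \to \bar G^{(p)})$, and the Hasse invariant — being the minimal value of $\deg(\cdot)^D$ over height-$d$ subgroups, realized by the Frobenius kernel — bounds $\deg C^D$ from below. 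With $ha(G) \le \deg C^D < 1/2$ in hand, Fargues applies and the argument closes.

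The main obstacle is precisely this comparison inequality $ha(G) \le \deg C^D$ together with the Frobenius characterization modulo $p^{1 - ha(G)}$: both require a careful analysis over the truncated bases $O_K/p^{\lambda}$, relating the Hodge filtration of $C$ to the Verschiebung and Frobenius and controlling how far the reduction of $C$ agrees with $\ker F$. By contrast the uniqueness step is purely formal, resting only on the combinatorics of the degree function; it is this formal input that makes the high-degree hypothesis do all the work, in keeping with the paper's stated philosophy.
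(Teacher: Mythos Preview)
Your uniqueness argument is correct and is essentially the paper's: the paper condenses it into the single line $\deg H + \deg C \le \deg(H+C) + \deg(H\cap C) \le \deg G[p] + h \le 2d_0 - 1$, but your schematic-closure unpacking amounts to the same inequality.

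The remainder of your strategy, however, both misses the point of the theorem and has a genuine gap. You propose to deduce $\deg C^D = ha(G)$ and the Frobenius description by first proving $ha(G) \le \deg C^D$ and then invoking Fargues's existence theorem. But Fargues's theorem (Theorem~\ref{main_theo} here) carries hypotheses on $p$: it excludes $p=2$ altogether, and for $p=3$ requires $ha(G) < 1/3$. So even granting your comparison inequality (which you do not prove, and which you yourself flag as the main obstacle), your argument would establish nothing for $p=2$, nor for $p=3$ in the range $1/3 \le ha(G) < 1/2$. The paper explicitly stresses that its result has \emph{no} hypothesis on $p$; the whole purpose of this theorem is to give a self-contained argument that does not appeal to Fargues's construction.

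The paper's route to $\deg C^D = ha(G)$ is direct. Set $w = \deg C^D = \deg(G[p]/C)$; the exact sequence $0 \to \omega_{G[p]/C} \to \omega_{G,\{1\}} \to \omega_C \to 0$ then yields an isomorphism $\omega_{G,\{1-w\}} \simeq \omega_{C,\{1-w\}}$. Filter $C$ by finite flat subgroups $0 = H_0 \subset H_1 \subset \cdots \subset H_{d_0} = C$ with each $H_i/H_{i-1}$ of height~$1$, hence an Oort--Tate group scheme. One checks that the induced filtration on $\omega_{C,\{1-w\}}$ is by free $O_{K,\{1-w\}}$-submodules, and by Oort--Tate the Verschiebung acts on the $i$-th graded piece by multiplication by an element of valuation $\deg(H_i/H_{i-1})^D$. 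Taking determinants gives, as an equality in $[0,1-w]$,
\[
ha(G) = \sum_{i=1}^{d_0} \deg(H_i/H_{i-1})^D = \deg C^D = w,
\]
and since $w < 1/2 < 1-w$ this is a genuine equality. For the Frobenius-kernel statement the paper observes that $\omega_{C^D} \otimes_{O_K} O_{K,\{1-w\}} \to \omega_{G^D,\{1-w\}}$ vanishes and then applies a general criterion (Proposition~1 of \cite{Fa}); this is not the existence theorem and carries no restriction on $p$.
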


If one supposes the existence of a subgroup of the right height, and whose dual has small degree, then it can easily be proved that this subgroup is canonical in some sense. The proof of the theorem is relatively simple, and relies on the properties of the degree function, together with the description of Tate-Oort (\cite{T-O}) for finite flat group schemes of order $p$. Note that there is no assumption on $p$ in this theorem, unlike the result in \cite{Fa}. It is then very natural to define the canonical subgroup as being a subgroup with prescribed height, and whose dual has sufficiently small degree. \\
$ $\\
\indent A key feature for the canonical subgroup is the relation between its degree and the Hasse invariant of the $p$-divisible group. When one considers a $p$-divisible group with additional structures, then much more can be said. Let $F$ be a finite extension of $\mathbb{Q}_p$, $O_F$ its ring of integers and suppose that $G$ is a $p$-divisible group with an action of $O_F$. Then it is possible to define partial Hasse invariants for $G$, partial degrees for the subgroups of $G[p]$, and to relate all these elements for the canonical subgroup. We will describe these invariants and the relations in the case where $F$ is either unramified or totally ramified, the general case being of combination of these two cases. \\
$ $\\
\indent Let $F$ be a finite unramified extension of $\mathbb{Q}_p$ and let $K$ be a complete valuated extension of $\mathbb{Q}_p$ containing $F$. Let $\mathcal{T}$ be the set of embeddings of $F$ into $\overline{\mathbb{Q}_p}$, and let $G$ be a $p$-divisible group over $O_K$ with an action of $O_F$. We recall that the Hasse invariant of $G$ is defined as the valuation of the determinant of the map
$$V : \omega_{\overline{G}} \to \omega_{\overline{G}^{(p)}} \text{,} $$
where $\overline{G} := G \times_{O_K} O_K /p$, the superscript $(p)$ means a twist by the Frobenius and $V$ is the Verschiebung. Since $G$ has an action of $O_F$, the $O_K$ module $\omega_G$ admits a direct sum decomposition according to the elements of $\mathcal{T}$ : 
$$\omega_G = \bigoplus_{\tau \in \mathcal{T}} \omega_{G,\tau} \text{.} $$
The map $V$ induces maps 
$$V_\tau : \omega_{\overline{G},\tau} \to \omega_{\overline{G}^{(p)},\sigma^{-1}\tau} \text{,} $$
where $\sigma \in \mathcal{T}$ is the Frobenius. One can then define partial Hasse invariants $ha_\tau (G) \in [0,1]$ as the valuation of the determinant of $V_\tau$ for all $\tau \in \mathcal{T}$. The sum of the partial Hasse invariants is the Hasse invariant $ha(G)$. \\
\indent If $H$ is a $O_F$-stable finite flat subgroup of $G[p]$, then one can define partial degrees $(\deg_\tau H)_\tau$ for $H$, as well as for its dual $H^D$. The sum of the partial degrees is the total degree. One has the following information concerning the canonical subgroup in that case.

\begin{bigtheo}
Let $F$ be a finite unramified extension of $\mathbb{Q}_p$ and let $K$ be a complete valuated extension of $\mathbb{Q}_p$ containing $F$. Let $G$ be a $p$-divisible group over $O_K$ with an action of $O_F$. Suppose that there exists a canonical subgroup $C$ for $G[p]$. Then one has 
$$\deg_\tau C^D = ha_\tau(G) \text{,} $$
for all $\tau \in \mathcal{T}$. If moreover the Hasse invariant of $G$ is strictly less than $1/(p+1)$ then 
$$ha_\tau (G/C) = p \cdot ha_{\sigma^{-1} \tau}(G) \text{,} $$
for all $\tau \in \mathcal{T}$.
\end{bigtheo}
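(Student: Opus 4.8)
The plan is to prove the two relations separately, in each case using the $O_F$-equivariant decomposition $\omega = \bigoplus_{\tau} \omega_{\cdot,\tau}$ together with the characterization of $C$ furnished by the first theorem, namely that $C$ is the kernel of the Frobenius modulo $p^{1-ha(G)}$.

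For the identity $\deg_\tau C^D = ha_\tau(G)$, I would first exhibit the two isogenies attached to $C$. The projection $\pi : G \to G/C$ has kernel $C$, and since $C$ is killed by $p$ the multiplication $[p]$ factors as $[p] = \pi' \circ \pi$ for a unique $\pi' : G/C \to G$ whose kernel is $G[p]/C$. Because $C$ reduces to $\ker F$ modulo $p^{1-ha(G)}$, the maps $\pi$ and $\pi'$ reduce to the Frobenius and the Verschiebung to that precision. Reading the cotangent exact sequence of $\pi'$ in the $\tau$-component identifies $\deg_\tau(G[p]/C)$ with the valuation of $\det(\pi')^*_\tau$, and the latter is congruent to $\det V_\tau$ modulo $p^{1-ha(G)}$; since $ha_\tau(G) \le ha(G) < 1/2 < 1 - ha(G)$, this congruence pins the valuation down exactly, giving $\deg_\tau(G[p]/C) = ha_\tau(G)$. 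Feeding this into the additivity of the partial degree along $0 \to C \to G[p] \to G[p]/C \to 0$ (and $\deg_\tau G[p] = \dim_{O_K}\omega_{G,\tau}$) yields $\deg_\tau C$, and the $O_F$-equivariant partial duality $\deg_\tau C + \deg_\tau C^D = \dim_{O_K}\omega_{G,\tau}$ then delivers $\deg_\tau C^D = ha_\tau(G)$. As a safeguard against mis-tracking the grading under duality, one may instead prove only the one-sided bound $\deg_\tau C^D \le ha_\tau(G)$ for every $\tau$ and invoke the total equality $\sum_\tau \deg_\tau C^D = \deg C^D = ha(G) = \sum_\tau ha_\tau(G)$ from the first theorem to force equality in each component.

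For the second relation, set $G' = G/C$. Since $\pi$ reduces to the Frobenius modulo $p^{1-ha(G)}$, the groups $G'$ and $G^{(p)}$ have the same reduction to that level, hence the same partial Verschiebung matrices to precision $p^{1-ha(G)}$. I would compute $ha_\tau(G^{(p)})$ directly: the Verschiebung of $G^{(p)}$ is the Frobenius twist of the Verschiebung of $G$, so it shifts the embedding index by $\sigma^{-1}$ and raises the entries of the representing matrix to the $p$-th power, multiplying the valuation of the determinant by $p$. This gives the exact identity $ha_\tau(G^{(p)}) = p \cdot ha_{\sigma^{-1}\tau}(G)$.

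The crux, and the point where the hypothesis $ha(G) < 1/(p+1)$ is indispensable, is the transfer of this computation from $G^{(p)}$ to $G'$. The invariant $ha_\tau(G')$ is a determinant valuation computed modulo $p$, whereas I only control $G' \equiv G^{(p)}$ modulo $p^{1-ha(G)}$; the reduction modulo $p^{1-ha(G)}$ determines such a valuation precisely when it is strictly smaller than $1 - ha(G)$. The value in question is $ha_\tau(G^{(p)}) = p \cdot ha_{\sigma^{-1}\tau}(G) \le p \cdot ha(G)$, and the inequality $p \cdot ha(G) < 1 - ha(G)$ is exactly $(p+1)\,ha(G) < 1$. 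Under the hypothesis the reduction therefore detects the valuation, forcing $ha_\tau(G') = ha_\tau(G^{(p)}) = p \cdot ha_{\sigma^{-1}\tau}(G)$. I expect the main technical care to lie here, in verifying that the determinant valuation of the partial Verschiebung of $G'$ is genuinely visible at level $p^{1-ha(G)}$ and is not inflated by the quotient construction; the strict inequality coming from the hypothesis is precisely what rules this out.
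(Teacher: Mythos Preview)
Your argument is correct, and for the second relation it coincides with the paper's: both use that $G/C$ and $G^{(p)}$ agree over $O_{K,\{1-ha(G)\}}$, compute $ha_\tau(G^{(p)}) = p\cdot ha_{\sigma^{-1}\tau}(G)$, and invoke $p\cdot ha(G) < 1-ha(G)$ to transfer the valuation.

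For the first relation the routes diverge. The paper does \emph{not} use the kernel-of-Frobenius characterization here. Instead it uses the isomorphism $\omega_{G,i,\{1-w\}}\simeq\omega_{C,i,\{1-w\}}$ (where $w=ha(G)$) and filters $C$ by $O_{F^{ur}}$-stable subgroups $0=H_0\subset H_1\subset\cdots\subset H_d=C$ whose successive quotients are Raynaud group schemes of type $(p,\dots,p)$; Raynaud's explicit description says that $V$ acts on the $i$-th piece of $\omega_{H_k/H_{k-1}}$ by multiplication by an element of valuation $\deg_i(H_k/H_{k-1})^D$, so the determinant of $V_i$ on $\omega_{C,i,\{1-w\}}$ has valuation $\sum_k\deg_i(H_k/H_{k-1})^D=\deg_i C^D$, giving $ha_i(G)=\deg_i C^D$ in $[0,1-w]$. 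Your approach instead factors $[p]=\pi'\circ\pi$, identifies $\pi'$ with $V$ over $O_{K,\{1-w\}}$ via the Frobenius-kernel statement, and reads off $\deg_\tau(G[p]/C)=v(\det(\pi')^*_\tau)=ha_\tau(G)$; the passage to $\deg_\tau C^D$ is then immediate (in fact $\deg_\tau(G[p]/C)=\deg_\tau C^D$ directly, so your additivity/duality detour is not even needed). Your route is shorter and avoids Raynaud's classification, at the cost of consuming the Frobenius-kernel input from the first theorem. The paper's route, by contrast, is the one that generalizes to the ramified case: the Raynaud filtration can be refined so that $H_{kd}=C[\pi^k]$, and this is precisely what yields the finer relations $\deg_i(C[\pi^k]/C[\pi^{k-1}])^D = ha_i^{[e+1-k]}(G)$ in the sequel.
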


Note that the computations of the partial Hasse invariants $ha_\tau (G/C)$ were already done in \cite{G-K} for the Hilbert modular variety. \\
\indent The definition of the partial Hasse invariants and the partial degrees is very natural when $F$ is unramified. The situation is more involved in the ramified case. Suppose now that $F$ is a totally ramified extension of $\mathbb{Q}_p$ of degree $e \geq 2$ with uniformizer $\pi$. Let $K$ be a complete valuated extension of $\mathbb{Q}_p$ containing the Galois closure of $F$ and let $G$ be a $p$-divisible group over $O_K$ with an action of $O_F$. The $O_K$-module $\omega_G$ do not split under the action of $O_F$, but one has a filtration
$$0 \subset \omega_{G}^{[1]} \subset \dots \subset \omega_G^{[e]}=\omega_G \text{,} $$
where $\omega_{G}^{[j]}/ \omega_G^{[j-1]}$ is free over $O_K$ and with $O_F$ acting on it by a fixed embedding. This filtration is well defined once we have fixed an ordering on $\Sigma$, the set of embeddings of $F$ into $\overline{\mathbb{Q}_p}$. The construction of the partial Hasse invariants for the special fiber of the Hilbert modular variety has been done by Reduzzi and Xiao (\cite{R-X}), and the generalization of their method is straightforward. Let us describe briefly this construction. The Verschiebung map respects the filtration on $\omega_G \otimes_{O_K}~O_K / \pi$; the valuations of the determinants of $V$ acting on the graded pieces give elements $ha^{[1]}(G), \dots, ha^{[e]}(G)$ in $[0,1/e]$ that we call the partial Hasse invariants. Moreover, one can decompose each of these invariants. The action of $O_F$ gives a map $[\pi] : \omega_G \to \omega_G$. If we denote by $\omega_{G,\{1/e\}} := \omega_G \otimes O_K / \pi O_K$, and similarly for $\omega_{G,\{1/e\}}^{[j]}$, then the maps $[\pi]$ sends $\omega_{G,\{1/e\}}^{[j]}$ into $\omega_{G,\{1/e\}}^{[j-1]}$ for all $1 \leq j \leq e$. One then gets a map
$$\omega_{G,\{1/e\}}^{[j]} / \omega_{G,\{1/e\}}^{[j-1]} \to \omega_{G,\{1/e\}}^{[j-1]} / \omega_{G,\{1/e\}}^{[j-2]} \text{,} $$
for all $2 \leq j \leq e$. The valuation of the determinant of this map will be denoted by $m^{[j]}(G)$. One also gets a map
$$\omega_{G,\{1/e\}}^{[1]} \to (\omega_{G,\{1/e\}}/ \omega_{G,\{1/e\}}^{[e-1]})^{(p)} \text{,} $$
where the superscript $(p)$ means a twist by a Frobenius. This map can be thought as the composition of the division by $[\pi]^{e-1}$ and the Verschiebung map. The valuation of the determinant of this map will be written $hasse(G)$. \\
\indent One can then define primitive Hasse invariants $(hasse(G), m^{[2]}(G), \dots, m^{[e]}(G))$; the partial invariants $ha^{[j]}(G)$ can moreover be expressed as linear combinations of these primitive Hasse invariants. Note that the element $hasse(G)$ is related to the Verschiebung, whereas the elements $m^{[j]}(G)$ depend only on the structure of $\omega_G$ as an $O_K \otimes_{\mathbb{Z}_p} O_F$-module. The relations $m^{[j]}(G)=0$ for all $2 \leq j \leq e$ are equivalent to the fact that $\omega_G$ is free over $O_K \otimes_{\mathbb{Z}_p} O_F$ (this is usually called the Rapoport condition). We prove a duality result for these partial and primitive Hasse invariants (section \ref{dual}). We also show that they do not depend on the choice of any ordering for $\Sigma$ if the total Hasse invariant is strictly less than $1/e$ (see Proposition \ref{final_indep}). \\
\indent If $H$ is an $O_F$-stable finite flat subgroup of $G[p]$, then one can define partial degrees $\deg^{[j]} H$. Indeed, one has a map
$$\omega_{G/H} \to \omega_G \text{,} $$
and the valuation of the determinant of this map is the degree of $H$. This map respects the filtration on each of the two modules, so one gets maps
$$\omega_{G/H}^{[j]} / \omega_{G/H}^{[j-1]} \to \omega_{G}^{[j]} / \omega_G^{[j-1]}$$ 
for all $1 \leq j \leq e$. The valuation of the determinant of this map is by the definition the partial degree of $H$. Considering the map $\omega_{G^D} \to \omega_{(G/H)^D}$, one defines similarly the partial degrees of the dual of $H$. We prove some properties for these partial degrees (additivity, compatibility with duality), and we also prove that if the degree of $H$ (or its dual) is sufficiently small, then the partial degrees do not depend on any choice for the set $\Sigma$ (see section \ref{deg_prop}).  \\
\indent In this setting, we prove the following properties for the canonical subgroup.
 
\begin{bigtheo}
Let $F$ be a totally ramified extension of $\mathbb{Q}_p$ of degree $e \geq 2$ with uniformizer $\pi$, and let $K$ be a complete valuated extension of $\mathbb{Q}_p$ containing the Galois closure of $F$. Let $G$ be a $p$-divisible group over $O_K$ with an action of $O_F$. Suppose that there exists a canonical subgroup $C \subset G[p]$, and suppose that the Hasse invariant of $G$ is strictly less than $1/e$. Then 
$$\deg (C[\pi^k] / C[\pi^{k-1}])^D = ha^{[e-k+1]}(G)$$
for all $1 \leq k \leq e$. Moreover, one has
$$\deg^{[1]} C[\pi]^D = hasse(G) \text{ and } \deg^{[j]} C[\pi]^D = m^{[j]}(G)$$
for all $2 \leq j \leq e$. If $ha(G) < 1/(pe)$ and if there is a canonical subgroup for $G/C$, then one has
$$ha^{[1]}(G/C[\pi]) = p \cdot ha^{[e]}(G) \text{ and } ha^{[j]}(G/C[\pi]) = ha^{[j-1]} (G)$$
for all $2 \leq j \leq e$.
\end{bigtheo}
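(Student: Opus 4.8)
The plan is to reduce all three families of identities to the behaviour of the Verschiebung on the graded pieces of the $\pi$-filtration, exploiting the characterization of $C$ furnished by the first theorem. First I would record that the canonical subgroup $C$ is $O_F$-stable: since $C$ is the unique subgroup of its height whose dual has degree $< 1/2$, and the $O_F$-action preserves both the height of a subgroup and the degree of its dual, each $[\pi]$ carries $C$ into itself. Consequently $C = C[\pi^e]$ inherits the $\pi$-filtration $C[\pi] \subset C[\pi^2] \subset \dots \subset C[\pi^e] = C$, and I may speak of the graded quotients $C[\pi^k]/C[\pi^{k-1}]$ and of their partial degrees.

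For the first family of identities I would compute the partial degree of $(C[\pi^k]/C[\pi^{k-1}])^D$ directly. The first theorem identifies $C$ with the kernel of Frobenius modulo $p^{1-ha(G)}$, so the conormal structure of $C^D$ is governed by the Verschiebung $V$, and the total relation $\deg C^D = ha(G)$ is refined by passing to graded pieces. The only subtlety is the index reversal $k \mapsto e-k+1$, which comes from the fact that dualizing reverses the ordering of $\Sigma$ defining the filtration on $\omega_G$: the bottom step $C[\pi]$ of the filtration on $C$ pairs with the top graded piece $\omega_G^{[e]}/\omega_G^{[e-1]}$, whence $\deg(C[\pi^k]/C[\pi^{k-1}])^D = ha^{[e-k+1]}(G)$. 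I would make this precise by writing $V$ in block-triangular form with respect to the two filtrations and reading off the valuations of the diagonal blocks, invoking the compatibility of the partial degrees with duality proved in section \ref{deg_prop}.

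The second family is the primitive refinement of the case $k=1$. Taking $k=1$ above gives $\deg C[\pi]^D = ha^{[e]}(G)$, and I would decompose this total degree into its partial degrees $\deg^{[j]} C[\pi]^D$. By the very construction of the primitive invariants, the composite that runs once around the filtration --- the chain of $[\pi]$-maps with determinant valuations $m^{[2]}(G), \dots, m^{[e]}(G)$ followed by the twisted Verschiebung of valuation $hasse(G)$ --- realizes $V$ on the top graded piece, so that $ha^{[e]}(G) = hasse(G) + \sum_{j=2}^{e} m^{[j]}(G)$. Matching the $j$-th partial degree of $C[\pi]^D$ with the $j$-th factor of this composite then yields $\deg^{[1]} C[\pi]^D = hasse(G)$ and $\deg^{[j]} C[\pi]^D = m^{[j]}(G)$ for $j \geq 2$. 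The hypothesis $ha(G) < 1/e$ is what forces this matching to be term-by-term and independent of the chosen ordering of $\Sigma$, via the independence statement of Proposition \ref{final_indep}.

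The last family, describing $G/C[\pi]$, is where I expect the real work to be, and it parallels the unramified relation $ha_\tau(G/C) = p \, ha_{\sigma^{-1}\tau}(G)$ of the second theorem. I would compute $\omega_{G/C[\pi]}$ together with its $\pi$-filtration in terms of $\omega_G$. Quotienting by the single layer $C[\pi]$, which modulo $p^{1-ha(G)}$ is the bottom step of the kernel of Frobenius, shifts the filtration index down by one; this already accounts for $ha^{[j]}(G/C[\pi]) = ha^{[j-1]}(G)$ for $2 \leq j \leq e$. The wrap-around piece is the delicate one: there the map computing $ha^{[1]}(G/C[\pi])$ factors through the Frobenius quotient, and the relation $V \circ F = [p]$ forces an extra factor of $p$, giving $ha^{[1]}(G/C[\pi]) = p \, ha^{[e]}(G)$. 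The main obstacle is to control this factor of $p$ exactly rather than up to a congruence: one must verify that on the bottom graded piece the Verschiebung of $G/C[\pi]$ is genuinely $p$ times the corresponding Verschiebung of $G$, not merely congruent to it. This is precisely what the stronger hypothesis $ha(G) < 1/(pe)$ secures, since it keeps every partial Hasse invariant small enough that the canonical subgroup of $G/C$ exists, that the graded contributions do not interfere, and that the factor of $p$ comes out as an equality.
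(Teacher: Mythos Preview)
Your proposal has the right broad shape, but two of the three parts contain genuine gaps, and your explanation of the index reversal in the first part is incorrect.

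For the first family, the reversal $k \mapsto e-k+1$ does \emph{not} come from duality reversing the ordering of $\Sigma$; indeed the paper's duality result gives $m^{[j]}(G) = m^{[j]}(G^D)$ with no index flip. The actual mechanism is this: since $\deg(G[p]/C) = ha(G) =: w$, one has $\omega_{G,\{1-w\}} \simeq \omega_{C,\{1-w\}}$, and the chain $0 \subset \omega_{C/C[\pi^{e-1}]} \subset \dots \subset \omega_{C/C[\pi]} \subset \omega_C$ is, after reduction modulo $\mathfrak{m}_{1/e}$, an \emph{adequate filtration} of $\omega_{G,\{1/e\}}$. Its $j$-th graded piece is $\omega_{C[\pi^{e-j+1}]/C[\pi^{e-j}]}$; the reversal is nothing more than contravariance of conormal sheaves in the quotient, so the smallest step corresponds to the top quotient $C/C[\pi^{e-1}]$. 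To identify the valuation of $V$ on these pieces with degrees of duals, the paper refines further so that the graded pieces are Raynaud group schemes of type $(p,\dots,p)$ and invokes Raynaud's structure theorem, by which $V$ on such a piece is multiplication by an element of valuation $\deg(\,\cdot\,)^D$. You do not mention this ingredient; without it the step from ``$V$ in block form'' to ``degree of the dual'' has no content. One then uses $ha(G) < 1/e$ and Proposition~\ref{final_indep} to know that the partial Hasse invariants computed from this adequate filtration agree with the $ha^{[j]}(G)$.

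For the second family, ``matching the $j$-th partial degree with the $j$-th factor of the composite'' is not an argument: you only know $\sum_j \deg^{[j]} C[\pi]^D = ha^{[e]}(G) = hasse(G) + \sum_{j\ge 2} m^{[j]}(G)$, and equality of sums gives no termwise equality. The paper establishes $\deg^{[j]} C[\pi]^D = m^{[j]}(G)$ for $j \ge 2$ by an explicit matrix computation on the map $\omega_{(G/C[\pi])^D}^\vee \to \omega_{G^D}^\vee$: writing $[\pi]$ on $\omega_{G^D,\{1/e\}}^\vee$ in block upper-triangular form with off-diagonal blocks $M^{[e]}, \dots, M^{[2]}$, it shows inductively that the intersection of the image with each step of the filtration is generated exactly by the corresponding $M^{[j]}$. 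The case $j=1$ then follows by subtraction. The hypothesis $ha(G) < 1/e$ enters as the inequality $ha^{[e]}(G) + \sum_{j=2}^{e} m^{[j]}(G) < 1/e$ needed for this induction to close, not merely via Proposition~\ref{final_indep}.

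Your approach to the third family --- directly analysing $\omega_{G/C[\pi]}$ and the wrap-around via $V \circ F = [p]$ --- is different from the paper's and may be workable, but the paper's route is cleaner and makes the role of the hypothesis on $G/C$ transparent. The paper first uses that $G/C$ is the Frobenius twist of $G$ modulo $p^{1-w}$ to get $ha^{[j]}(G/C) = p \cdot ha^{[j]}(G)$, then applies the already-proved first family to $G/C$ with its canonical subgroup $C_2/C$, obtaining $\deg(C_2[\pi^{e+k}]/C_2[\pi^{e+k-1}])^D = p \cdot ha^{[e+1-k]}(G)$. Observing that $C_2[\pi^{k+e}]/C[\pi^k]$ is the canonical subgroup of $G/C[\pi^k]$, one applies the first family once more and the formulas for $ha^{[j]}(G/C[\pi])$ drop out by bookkeeping. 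In your version the hypothesis that $G/C$ has a canonical subgroup is never actually used, and the $V \circ F = [p]$ heuristic would have to be made precise at the level of individual filtered pieces of $\omega_{G/C[\pi]}$, which is not straightforward.
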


One can then relate the degree of the groups $C[\pi^k] / C[\pi^{k-1}]$ to the partial Hasse invariants, and the partial degrees of $C[\pi]$ to the primitive Hasse invariants. One can also compute the partial Hasse invariants of $G/C[\pi]$. Actually, one can have more relations, and compute the partial degrees of $C[\pi^k]/C[\pi^{k-1}]$, and the partial and primitive Hasse invariants of $G/C[\pi^k]$ for all $1 \leq k \leq e$ (see tables $\ref{tab}$ and $\ref{tabl}$). \\
$ $\\
\indent Let us now talk about the organization of the paper. In the first part, we define the partial and primitive Hasse invariants for a $p$-divisible group with an action, and prove certain properties for these invariants. In the second part, we define and study the partial degrees for a finite flat subgroup of such a $p$-divisible group. In the third section, we first describe an alternative approach to the canonical subgroup. Then we consider the canonical subgroup of a $p$-divisible group with an action, and relate its partial degrees to the primitive and partial Hasse invariants. \\
$ $\\
\indent The author would like to thank Valentin Hernandez for helpful discussions.

\section*{Notations}

Let $F$ be a finite extension of $\mathbb{Q}_p$. Let $f$ and $e$ be respectively the residual degree and the ramification index, and let $O_F$ denote the ring of integer of $F$. We will write $F^{ur}$ for the maximal unramified extension of $\mathbb{Q}_p$ contained in $F$ and $O_{F^{ur}}$ for its ring of integer; it is an extension of $\mathbb{Q}_p$ of degree $f$. Let $\pi$ be an uniformizer of $F$. \\
\indent Let $\mathcal{T}$ and $\Sigma$ be respectively the set of embeddings of $F^{ur}$ and $F$ into $\overline{\mathbb{Q}_p}$. For each $\tau \in \mathcal{T}$, we denote by $\Sigma_\tau$ the set of $\sigma \in \Sigma$, such that $\sigma$ is equal to $\tau$ by restriction to $F^{ur}$. We will write $\mathcal{T} = \{\tau_1, \dots, \tau_f\}$, such that $\tau_{i+1}=\sigma \tau_i$ for $1 \leq i \leq f-1$, where $\sigma$ is the Frobenius. We thus have an identification between $\mathcal{T}$ and $\{1, \dots, f\}$. \\
\indent Let $K$ be a complete valuated field which is an extension of $\mathbb{Q}_p$. We suppose that $K$ contains the Galois closure of $F$. We normalize the valuation of $K$ such that $v(p)=1$. Let $O_K$ be the valuation ring of $K$, and $k$ the residue field. If $M$ is an $O_K$-module with an action of $O_{F^{ur}}$, then there is a decomposition
$$M = \bigoplus_{i=1}^f M_i \text{,} $$
where $M_i$ consists of the elements of $M$ where $O_{F^{ur}}$ acts by $\tau_i$. \\
\indent For all $\alpha >0$ we will write $\mathfrak{m}_\alpha := \{x \in O_K, v(x) \geq \alpha \}$. If $M$ is a $O_K$-module, we write $M_{\{\alpha\}} : =M  \otimes_{O_K} O_K / \mathfrak{m}_\alpha$. If $M$ is a free $O_{K,\{\alpha\}}$-module of finite rank, with $\alpha \leq 1$, we define $M^{(p)} := M \otimes_{O_{K,\{\alpha\}},\varphi} O_{K,\{\alpha\}}$, where $\varphi$ is the Frobenius acting on $O_{K,\{\alpha\}}$. This is still a free $O_{K,\{\alpha\}}$ module of the same rank. \\
$ $\\
\indent In this paper, we will consider a $p$-divisible group $G$ defined over $O_K$ endowed with an action of $O_F$. In other words, we suppose the existence of a morphism
$$O_F \to \text{End}(G) \text{.} $$
The height of $G$ is thus divisible by $ef$, we will denote by $h$ this height divided by $ef$. Let $\omega_G$ be the conormal sheaf of $G$ along its unit section; it is a free $O_K$-module of rank the dimension of $G$ which has an action of $O_F$. We will make the following hypothesis throughout this article.

\begin{hypo} \label{hypothese}
The $K$-vector space $\omega_G \otimes_{O_K} K$ is a free $K \otimes_{\mathbb{Z}_p} O_F$-module. 
\end{hypo}

This condition says that there is no obstruction for $G$ to be ordinary. In general, there exists a decomposition $\omega_G \otimes_{O_K} K = \oplus_{\sigma \in \Sigma} V_\sigma$, with $O_F$ acting on $V_\sigma$ by $\sigma$. The hypothesis is then equivalent to the fact that the dimension of $V_\sigma$ is independent of $\sigma$. Let $d$ be the dimension of any $V_\sigma$; the dimension of $G$ is then $efd$. If $\omega_G$ is a free $O_K \otimes_{\mathbb{Z}_p} O_F$-module, then we say that $G$ satisfies the Rapoport condition. \\
$ $\\
\indent The module $\omega_G$ has an action of $O_{F^{ur}}$, thus has a decomposition
$$\omega_G = \bigoplus_{i=1}^f \omega_{G,i} \text{,} $$
where $\omega_{G,i}$ is a free $O_K$-module of rank $ed$, with $O_{F^{ur}}$ acting on it by $\tau_i$. To simplify the notations, we will just write $\omega$ and $\omega_i$ for $\omega_G$ and $\omega_{G,i}$ when there is no possible confusion.

\section{Partial Hasse invariants}

\subsection{Definition of the invariants}

Let $\overline{G}$ denote the reduction of $G$ to $O_K/p$, and let $\overline{G}^{(p)}$ be the twist of $G$ by the Frobenius. We have the Verschiebung map
$$V : \omega_{\overline{G}} \to \omega_{\overline{G}^{(p)}} \text{.} $$
But we have $\omega_{\overline{G}} = \omega_{\{1\}} $ and $\omega_{\overline{G}^{(p)}} = (\omega_{\{1\}})^{(p)}$. We thus get a map
$$V : \omega_{\{1\}} \to (\omega_{\{1\}})^{(p)} \text{.} $$

The module $\omega_{\{1\}}$ is free of rank $efd$ over $O_{K,\{1\}}$. By fixing bases and taking the determinant of the previous map, one gets an element that we will denote by $Ha(G)$. It is an element of $O_{K,\{1\}}$. Its truncated valuation is the Hasse invariant and will be denote by $ha(G) \in [0,1]$. Since $G$ has an action of $O_F$, one can refine this invariant, and define partial Hasse invariants. They are natural in the unramified case. \\
\indent Recall that we have a decomposition $\omega = \oplus_{i=1}^f \omega_i$. The Verschiebung map induce maps
$$V_i : \omega_{i,\{1\}} \to (\omega_{i-1,\{1\}})^{(p)}$$
for all $1 \leq i \leq f$ (here and later, we set $\omega_{0} = \omega_{f}$). Each module $\omega_{i,\{1\}}$ is free of rank $ed$ over $O_{K,\{1\}}$.

\begin{defi}
The element ${Ha}_i(G)$ is defined as the determinant of $V_i$. It is an element of $O_{K,\{1\}}$. Its truncated valuation will be denoted by $ha_i(G) \in [0,1]$.
\end{defi}

We call the elements $(ha_i(G))_i$ the \textit{unramified partial Hasse invariants}. If $F$ is unramified over $\mathbb{Q}_p$, we have thus constructed all the partial Hasse invariants claimed in the introduction. The situation is more involved in the ramified case. Their definition is a straightforward generalization of a construction from Reduzzi and Xiao for the special fiber of the Hilbert modular variety (see \cite{R-X}). \\
$ $\\
\indent Let us fix an element $1 \leq i \leq f$ and consider the free $O_K$-module $\omega_{i}$. It has an action of $O_F$, and $O_{F^{ur}}$ acts on it by $\tau_i$. Let us write $\Sigma_i$ for $\Sigma_{\tau_i}$; we recall that it consists of the elements of $\Sigma$ which are equal to $\tau_i$ by restriction to $F^{ur}$. Let us fix an ordering on this set : $\Sigma_i = \{\sigma_{i,1}, \dots, \sigma_{i,e} \}$. The $K$-vector space $\omega_{i} \otimes_{O_K} K$ has a natural decomposition
$$\omega_{i} \otimes_{O_K} K = \bigoplus_{j=1}^e N_{i,j} \text{,} $$
where $N_{i,j}$ consists of the elements of $\omega_i \otimes_{O_K} K$ with $O_F$ acting on them by $\sigma_{i,j}$. This gives a filtration on $\omega_G \otimes_{O_K} K$, by considering the subspaces $F_{i,j} =\oplus_{k=1}^j N_{i,k}$. We can pull back this filtration to $\omega_{i}$, and get a filtration
$$0=\omega_{i}^{[0]} \subset \omega_{i}^{[1]} \subset \dots \subset \omega_{i}^{[e]}=\omega_{i} \text{.} $$
Each $\omega_{i}^{[j]}$ is a free $O_K$-module of rank $dj$, and we have $\omega_{i}^{[j]} \otimes_{O_K} K = F_{i,j}$. By consequence, each of the graded pieces $\omega_{i}^{[j]} / \omega_{i}^{[j-1]}$ is a free $O_K$-module of rank $d$, is isomorphic over $K$ to $N_{i,j}$ and $O_F$ acts by $\sigma_{i,j}$ on it for all $1 \leq j \leq e$. \\
\indent The uniformizer $\pi$ of $F$ acts on $\omega_i$; we will denote $ [\pi] : \omega_i \to \omega_i$ the map induced by its action. This map acts on $\omega_{i}^{[j]} / \omega_{i}^{[j-1]}$ as the scalar $\sigma_{i,j} (\pi)$ for all $1 \leq j \leq e$. This element has valuation~$1/e$; thus if we reduce modulo $\mathfrak{m}_{1/e}$, the map $[\pi]$ will be trivial on the graded pieces. More precisely, for all $1 \leq j \leq e$, we have a map
$$[\pi] : \omega_{i,\{1/e\}}^{[j]} \to  \omega_{i,\{1/e\}}^{[j-1]} \text{.} $$

\begin{defi}
For all $1 \leq i \leq f$, and for all $2 \leq j \leq e$, we write $M_{i}^{[j]}$ the map
$$[\pi] : \omega_{i,\{1/e\}}^{[j]} / \omega_{i,\{1/e\}}^{[j-1]} \to \omega_{i,\{1/e\}}^{[j-1]} / \omega_{i,\{1/e\}}^{[j-2]} \text{.} $$
We write $M_i^{[j]} (G)$ the determinant of this map; it is an element of $O_{K,\{1/e\}}$. We also define $m_i^{[j]}(G) :=~v(M_i^{[j]}(G)) \in [0,1/e]$.
\end{defi}

Note that all the graded parts $\omega_{i,\{1/e\}}^{[j]} / \omega_{i,\{1/e\}}^{[j-1]}$ are free of rank $d$ over $O_{K,\{1/e\}}$. The element $M_i^{[j]}(G)$ depends on the choice of the uniformizer $\pi$, but its valuation $m_i^{[j]}(G)$ does not. These elements also depend on the choice of an ordering for the set $\Sigma_i$. To study this dependence, we first make a definition.

\begin{defi}
A filtration $0 = F_{i,\{1/e\}}^{[0]} \subset F_{i,\{1/e\}}^{[1]} \subset \dots \subset F_{i,\{1/e\}}^{[e]}=\omega_{i,\{1/e\}}$ is called \textit{adequate} if the following conditions are satisfied. 
\begin{itemize}
\item Each $F_{i,\{1/e\}}^{[j]}/F_{i,\{1/e\}}^{[j-1]}$ is a free $O_{K,\{1/e\}}$-module of rank $d$, for $1 \leq j \leq e$.
\item The map $[\pi]$ sends $F_{i,\{1/e\}}^{[j]}$ into $F_{i,\{1/e\}}^{[j-1]}$ for $1 \leq j \leq e$.
\end{itemize}
\end{defi}

The filtration $(\omega_{i,\{1/e\}}^{[j]})$ we constructed is thus adequate. Each adequate filtration gives maps
$$[\pi] : F_{i,\{1/e\}}^{[j]} / F_{i,\{1/e\}}^{[j-1]} \to F_{i,\{1/e\}}^{[j-1]} / F_{i,\{1/e\}}^{[j-2]} \text{,} $$
for $2 \leq j \leq e$ and thus elements $(m_i^{[j]})$, with $m_i^{[j]} \in [0,1/e]$. If these elements are small enough, then they do not depend on the adequate filtration. Indeed, we have the following propositions.

\begin{prop} \label{independance}
Let $(F_{i,\{1/e\}}^{[j]})$ be an adequate filtration of $\omega_{i,\{1/e\}}$, with invariants $m_i^{[j]}$. Let $r_i = \sum_{k=2}^e (k-1) m_i^{[k]}$ and suppose $r_i < 1/e$. If $({F_{i,\{1/e\}}^{[j]}}')$ is another adequate filtration, then we have
$$ F_{i,\{1/e-r_i\}}^{[j]} = {F_{i,\{1/e-r_i\}}^{[j]}}'$$
for all $1 \leq j \leq e$.
\end{prop}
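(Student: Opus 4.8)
The plan is to show that the two adequate filtrations agree modulo $\mathfrak{m}_{1/e - r_i}$ by exhibiting each step of one filtration inside the corresponding step of the other, up to the appropriate truncation. The key structural fact is that $[\pi]$ is an endomorphism of $\omega_{i,\{1/e\}}$ which, on any adequate filtration, maps the $j$-th piece into the $(j-1)$-th, so iterating $[\pi]$ pushes us down the filtration. Concretely, on an adequate filtration $[\pi]^{j}$ sends $F^{[j]}$ into $F^{[0]} = 0$, i.e. each $F^{[j]}$ is killed by $[\pi]^j$; conversely I would like to recover $F^{[j]}$ (up to truncation) as something like $\ker [\pi]^{j}$ or $\mathrm{im}\,[\pi]^{e-j}$. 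The invariants $m_i^{[k]}$ precisely measure the failure of $[\pi]$ to be an isomorphism on graded pieces, and $r_i = \sum_{k=2}^e (k-1) m_i^{[k]}$ is the total valuation lost when iterating $[\pi]$ all the way down. So the heuristic is: modulo $\mathfrak{m}_{1/e - r_i}$, the filtration is canonically determined by the kernels or images of powers of $[\pi]$, and hence independent of the chosen ordering.

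First I would fix the module $\omega_{i,\{1/e\}}$, free of rank $ed$ over $O_{K,\{1/e\}}$, and analyze the operator $[\pi]$ on it directly. I would compute, for the constructed filtration $(\omega_{i,\{1/e\}}^{[j]})$, the valuation of the determinant of $[\pi]^{e-j} : \omega_{i,\{1/e\}}^{[e]}/\omega_{i,\{1/e\}}^{[j]} \to \omega_{i,\{1/e\}}^{[e-j]}$ (or the analogous map), expressing it in terms of the $m_i^{[k]}$; this is where the combinatorial weight $(k-1)$ appears, since the $k$-th graded piece gets hit by $[\pi]$ exactly $k-1$ times before reaching the bottom step of its descent. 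Summing these contributions yields $r_i$. Then, for an arbitrary adequate filtration $({F^{[j]}}')$, the same computation applies by the definition of its invariants $m_i^{[k]}$, and the proposition's uniqueness of the total degree (already implicit in the constructed case) would force the two sets of invariants to be comparable.

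The crux is then a commutative-algebra comparison: given two adequate filtrations with the same underlying module and the bound $r_i < 1/e$, I would argue that $F^{[j]}$ and ${F^{[j]}}'$ must coincide after reducing modulo $\mathfrak{m}_{1/e - r_i}$. The cleanest route is to show that $\omega_{i,\{1/e\}}^{[j]}$ can be characterized intrinsically (independently of the ordering) as the preimage under $[\pi]^{j}$ of a submodule that is determined up to the error $\mathfrak{m}_{1/e-r_i}$, or equivalently that both $F^{[j]}$ and ${F^{[j]}}'$ equal the submodule annihilated by $[\pi]^{j}$ up to that truncation. Establishing the two-sided containment $F^{[j]}_{\{1/e-r_i\}} \subset {F^{[j]}_{\{1/e-r_i\}}}'$ and its reverse would complete the argument, using that an element in one filtration satisfies $[\pi]^{j}(x) = 0$ in $\omega_{i,\{1/e\}}$, hence lands in the other filtration modulo the controlled error.

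\textbf{The main obstacle} I expect is the bookkeeping over the non-reduced ring $O_{K,\{1/e\}}$: since $[\pi]$ is not injective and we are working modulo $\mathfrak{m}_{1/e}$, kernels and images are delicate, and one must carefully track exactly how much valuation is lost at each application of $[\pi]$ so that the cumulative loss is controlled by $r_i$ and no more. In particular, proving the reverse containment requires showing that an element of ${F^{[j]}}'$, when pushed down by $[\pi]$ and compared against $F^{[j]}$, does not pick up error beyond $\mathfrak{m}_{1/e-r_i}$; this demands a precise lemma on the divisibility of $[\pi]^{k}x$ along the filtration, which is the technical heart of the proof. I would isolate that divisibility statement as the key step and verify it by descending induction on $j$.
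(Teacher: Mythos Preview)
Your plan is correct and is essentially the paper's approach: characterize each step of an adequate filtration as the kernel of a power of $[\pi]$ up to a truncation controlled by the $m_i^{[k]}$, and conclude that any two adequate filtrations agree after that truncation. Two small points of divergence are worth flagging so you do not get tangled when you write it out.

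First, the paper's execution is more concrete and avoids the abstract ``divisibility lemma'' you anticipate. It fixes a basis adapted to the first filtration $(F^{[j]})$, so that $[\pi]$ becomes a block strictly-upper-triangular matrix with the $M_i^{[k]}$ on the superdiagonal. Then for $x\in{F^{[1]}}'$ one has $[\pi]x=0$, and reading the bottom row of this matrix equation gives $M_i^{[e]}X_e=0$, forcing $X_e\in\mathfrak m_{1/e-m_i^{[e]}}$; feeding this into the next row gives $M_i^{[e-1]}X_{e-1}=0$ modulo that error, and so on. This back-substitution is the entire ``technical heart''; no separate lemma is needed.

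Second, the induction is \emph{ascending} in $j$, not descending: after identifying the first steps modulo $r_i^{[2]}:=\sum_{k\ge 2}m_i^{[k]}$, one passes to the quotient $\omega_{i,\{1/e-r_i^{[2]}\}}/F^{[1]}_{\{1/e-r_i^{[2]}\}}$ and repeats. The weight $(k-1)$ on $m_i^{[k]}$ then arises not because the $k$-th piece is hit $k-1$ times by $[\pi]$, but because $m_i^{[k]}$ appears in each of the partial sums $r_i^{[2]},\dots,r_i^{[k]}$ used at the first $k-1$ steps of this induction, and $r_i=r_i^{[2]}+\cdots+r_i^{[e]}$. Your heuristic gives the right number but a slightly misleading picture of where it comes from.
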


\begin{proof}
We fix a basis of $\omega_{i,\{1/e\}}$ adapted to the filtration $(F_{i,\{1/e\}}^{[j]})$. The map $[\pi]$ acts on $\omega_{i,\{1/e\}}$ by a matrix of the form
\begin{displaymath}
\left(
\begin{array}{ccccc}
0 & M_i^{[2]}  & \dots & * & * \\
 & 0  & \ddots  & * &* \\
& &  \ddots & M_i^{[e-1]} & N_{e-1,e} \\
& & & 0 & M_i^{[e]} \\
& & & & 0 
\end{array}
\right)  \text{.} 
\end{displaymath}
The elements $x \in {F_{i,\{1/e\}}^{[1]}}'$ verify $[\pi] x = 0$.  Let us write the coordinates of $x$ in the previous basis by
$$x = \left( \begin{array}{c}
X_1 \\
\vdots \\
X_e
\end{array}
 \right) \text{.} $$
The relation $[\pi] x = 0$ gives $M_i^{[e]} X_e = 0$. Since the determinant of $M_i^{[e]}$ has valuation $m_i^{[e]}$, the vector $X_e$ has coefficients in $\mathfrak{m}_{1/e - m_i^{[e]}}$. We thus have $X_e=0$ in $\omega_{i,\{1/e - m_i^{[e]}\}}$. We also have the relation $M_i^{[e-1]} X_{e-1} + N_{e-1,e} X_e = 0$. In $\omega_{i,\{1/e - m_i^{[e]}\}}$, we then have $M_i^{[e-1]} X_{e-1} = 0$. Thus $X_{e-1} = 0$ in $\omega_{i,\{1/e - m_i^{[e]} - m_i^{[e-1]}\}}$. Let us write $r_i^{[j]} = \sum_{k=j}^e m_i^{[k]}$ for all $2 \leq j \leq e$. Finally, we see that $x \in F_{i,\{1/e - r_i^{[2]}\}}^{[1]}$, so that
$${F_{i,\{1/e- r_i^{[2]}\}}^{[1]}}' = F_{i,\{1/e- r_i^{[2]}\}}^{[1]} \text{.} $$
We can then work by induction, considering $\omega_{i,\{1/e- r_i^{[2]} \}} / F_{i,\{1/e- r_i^{[2]}\}}^{[1]}$. We then get
$${F_{\{1/e- r_i^{[2]} - \dots - r_i^{[j+1]} \}}^{[j]}}' = F_{\{1/e- r_i^{[2]} - \dots - r_i^{[j+1]}\}}^{[j]}$$
for all $1 \leq j \leq e-1$. Hence the result since $r_i^{[2]} + \dots + r_i^{[e]} = r_i$.
\end{proof}

\noindent We will write $r_i(G) = \sum_{k=2}^e (k-1) m_i^{[k]} (G)$ for all $1 \leq i \leq f$.

\begin{coro} \label{indep_m}
Let $i$ be an integer between $1$ and $f$, and suppose that we have
$$m_{i}^{[j]}(G) + r_i(G) < 1/e$$
for all $1 \leq j \leq e$. Then the elements $(m_i^{[j]} (G))_{1 \leq j \leq e}$ can be computed using any adequate filtration on $\omega_{i,\{1/e\}}$. In particular, they do not depend on an ordering for the set $\Sigma_i$.
\end{coro}

\begin{proof}
Let $(F_{i,\{1/e\}}^{[j]})$ be an adequate filtration; from the previous proposition we get
$$F_{i,\{1/e - r_i(G)\}}^{[j]} = \omega_{i,\{1/e - r_i(G)\}}^{[j]}$$
for all $1 \leq j \leq e$. The map $[\pi] : \omega_{i,\{1/e - r_i(G)\}}^{[j]} / \omega_{i,\{1/e - r_i(G)\}}^{[j-1]} \to \omega_{i,\{1/e - r_i(G)\}}^{[j-1]} / \omega_{i,\{1/e - r_i(G)\}}^{[j-2]}$ has a determinant of valuation $m_i^{[j]}(G)$ for all $2 \leq j \leq e$. The determinant of 
$$[\pi] : F_{i,\{1/e - r_i(G)\}}^{[j]} / F_{i,\{1/e - r_i(G)\}}^{[j-1]} \to F_{i,\{1/e - r_i(G)\}}^{[j-1]} / F_{i,\{1/e - r_i(G)\}}^{[j-2]}$$
has thus also a determinant of valuation $m_i^{[j]}(G)$ for all $2 \leq j \leq e$. Since $m_i^{[j]}(G) < 1/e - r_i(G)$, the invariant $m_i^{[j]}$ associated to the adequate filtration is equal to $m_i^{[j]}(G)$.
\end{proof}

These invariants $(m_i^{[j]} (G))$ depend only on the structure of $\omega$ as an $O_K \otimes_{\mathbb{Z}_p} O_F$-module. We have the following characterization of the Rapoport condition.

\begin{prop}
The $p$-divisible group $G$ satisfies the Rapoport condition if and only if $m_i^{[j]}(G)=0$ for all $1 \leq i \leq f$ and $2 \leq j \leq e$.
\end{prop}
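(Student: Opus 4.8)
The plan is to reduce the statement to a claim about each $\tau_i$-component $\omega_i$ separately and then prove the two implications, the substantive one being that the vanishing of the $m_i^{[j]}(G)$ forces freeness. Since $K$ contains the Galois closure of $F$, the idempotents attached to $\tau_1,\dots,\tau_f$ yield $O_K\otimes_{\mathbb Z_p}O_F\cong\prod_{i=1}^f R_i$, where $R_i:=O_K\otimes_{O_{F^{ur}},\tau_i}O_F\cong O_K[x]/(E_i(x))$, with $E_i=\tau_i(E)$ the image of the Eisenstein polynomial $E$ of $\pi$ over $O_{F^{ur}}$ and $x$ acting as $[\pi]$. Under this the decomposition $\omega_G=\oplus_i\omega_i$ is exactly the decomposition into $\prod_i R_i$-components, so $\omega_G$ is free over $O_K\otimes_{\mathbb Z_p}O_F$ if and only if each $\omega_i$ is free of rank $d$ over $R_i$. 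Because $E$ is Eisenstein, all non-leading coefficients of $E_i$ lie in $\mathfrak m_1$, hence $E_i\equiv x^e$ modulo $\mathfrak m_{1/e}$ (and modulo $\mathfrak m$); thus $[\pi]$ is nilpotent of exponent $e$ on $\omega_{i,\{1/e\}}$ and on $\omega_i\otimes k$. I would also recast the hypothesis: $m_i^{[j]}(G)=0$ says $M_i^{[j]}$ is a map between free rank-$d$ modules with unit determinant, i.e. an isomorphism, for all $2\le j\le e$.

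For the forward implication, suppose $\omega_i=R_i^d$ and fix an $R_i$-basis. I would first identify the filtration modulo $\mathfrak m_{1/e}$: since $\omega^{[j]}_i=\{v:\ \prod_{k\le j}(x-\sigma_{i,k}(\pi))v=0\}$ and $\prod_{k\le j}(x-\sigma_{i,k}(\pi))\equiv x^j$ modulo $\mathfrak m_{1/e}$, reduction gives $\omega^{[j]}_{i,\{1/e\}}\subseteq\ker(x^j)=x^{e-j}\omega_{i,\{1/e\}}$; as both are direct summands of $\omega_{i,\{1/e\}}$ of the same rank $jd$ over the local ring $O_{K,\{1/e\}}$, they coincide. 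On the graded piece $x^{e-j}\omega_{i,\{1/e\}}/x^{e-j+1}\omega_{i,\{1/e\}}$ the operator $[\pi]$ is then identified with the identity of $\omega_{i,\{1/e\}}/x\,\omega_{i,\{1/e\}}$, so $M_i^{[j]}$ has unit determinant and $m_i^{[j]}(G)=0$.

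The reverse implication is the core step. Assuming every $M_i^{[j]}$ is an isomorphism, I would work over the residue field $k$, which is legitimate because a unit determinant over $O_{K,\{1/e\}}$ stays nonzero modulo $\mathfrak m$. Writing $\bar\omega:=\omega_i\otimes k$ with its nilpotent $x$ and reduced filtration $\bar\omega^{[j]}$ (of dimension $jd$, with graded maps still isomorphisms), I would lift a $k$-basis $v_1,\dots,v_d$ of $\mathrm{gr}^e=\bar\omega/\bar\omega^{[e-1]}$ and check that $\{x^kv_l:0\le k\le e-1,\ 1\le l\le d\}$ is a $k$-basis of $\bar\omega$: indeed $x^kv_l\in\bar\omega^{[e-k]}$, and since the composite $M_i^{[e-k+1]}\circ\cdots\circ M_i^{[e]}$ of graded isomorphisms carries the class of $v_l$ in $\mathrm{gr}^e$ to the class of $x^kv_l$ in $\mathrm{gr}^{e-k}$, the elements $x^kv_l$ induce bases of every graded piece and hence form a basis by the standard filtration argument. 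This exhibits $\bar\omega\cong(k[x]/x^e)^d$. Lifting $v_l$ to $e_l\in\omega_i$, the $R_i$-linear map $\phi:R_i^d\to\omega_i$ sending the standard basis to $(e_l)$ then satisfies $\det\phi\not\equiv0$ modulo $\mathfrak m$, so $\det\phi$ is a unit and $\phi$ is an isomorphism; thus $\omega_i$ is free over $R_i$.

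The main obstacle is exactly this reverse direction: the invariants $m_i^{[j]}$ record only information modulo $\mathfrak m_{1/e}$, whereas the Rapoport condition is an integral statement over the valuation ring $O_K$, which need not be a discrete valuation ring. The plan avoids any structure theory by passing to the residue field, using the graded isomorphisms to produce a cyclic-type basis of $\bar\omega$, and then upgrading to $O_K$ through a determinant/Nakayama argument. The one further point requiring care is the identification $\omega^{[j]}_{i,\{1/e\}}=\ker(x^j)$ used in the forward direction, which rests on the equal-rank-summand observation over the local ring $O_{K,\{1/e\}}$, together with the compatibility of the filtration with reduction.
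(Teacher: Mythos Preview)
Your proof is correct and follows essentially the same strategy as the paper: for the forward direction both identify the reduced filtration with the $[\pi]$-power filtration to see that each $M_i^{[j]}$ is an isomorphism, and for the reverse direction both take a basis of the top graded piece, propagate it down via $[\pi]$ using invertibility of the $M_i^{[j]}$, and then check the resulting family is a basis of $\omega_i$ by reducing modulo the maximal ideal. The only cosmetic difference is packaging: the paper writes an explicit change-of-basis matrix over $O_K$ and checks its reduction to $k$ is invertible, whereas you phrase the same step as an $R_i$-linear map $R_i^d\to\omega_i$ and invoke Nakayama.
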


\begin{proof}
The Rapoport condition is equivalent to the fact that each $\omega_{i}$ is free over $O_K \otimes_{O_{F^{ur}},\tau_i} O_F$ for all $1 \leq i \leq f$. Suppose that $G$ satisfies the Rapoport condition. Then we have 
$$\omega_{i} \simeq (O_K \otimes_{O_{F^{ur}},\tau_i} O_F)^d$$
as $O_K \otimes_{O_{F^{ur}},\tau_i} O_F$-module. One easily reduces to the case $d=1$. Since we want to prove that the invariants $m_i^{[j]}(G)$ are units, one can make the computation in the special fiber. But we have
$$\omega_{i} \otimes_{O_K} k \simeq k[X]/X^e$$
as $k \otimes_{\mathbb{Z}_p} O_F$-module, with $\pi$ acting on $k[X]/X^e$ by $X$. We get that $\omega_{i}^{[j]} \otimes_{O_K} k$ is generated as a $k$-vector space by $X^{e-j}, \dots, X^{e-1}$. The result follows. \\
Suppose now that we have $m_{i}^{[j]}(G)=0$ for all $1 \leq i \leq f$ and $2 \leq j \leq e$. The map $[\pi]$ acting on $\omega_{i}$ is then of the form
\begin{displaymath}
\left(
\begin{array}{cccc}
\sigma_{i,1} (\pi) I & M_i^{[2]}  & \dots & * \\
 & \sigma_{i,2} (\pi) I & \ddots  & * \\
& &  \ddots & M_i^{[e]} \\
& & & \sigma_{i,e} (\pi) I 
\end{array}
\right) \text{.} 
\end{displaymath}
All the blocks are of size $d$, $I$ is the identity matrix, and by assumption the matrixes $M_i^{[j]}$ are invertible. Let $v_1 \dots, v_d$ be a basis of the last block. Consider the family $(v_k, [\pi] v_k, \dots, [\pi]^{e-1} v_k)$, and let $N$ be the change-coordinate matrix for this family. The image of this matrix in the residue field $k$ is of the form
\begin{displaymath}
\left(
\begin{array}{ccccc}
0 & * & * & \dots & M_i^{[2]} \dots M_i^{[e]} \\
0 & * & *  & \reflectbox{$\ddots$} &\\
0 & * & M_i^{[e-1]} M_i^{[e]}  &  &\\
0 & M_i^{[e]} &   & &  \\
I &  &  & &  
\end{array}
\right) \text{.} 
\end{displaymath}
This matrix is invertible, so the previous family is a basis for $\omega_{i}$. This concludes the fact that this module is free over $O_K \otimes_{O_{F^{ur}},\tau_i} O_F$.
\end{proof}

\begin{rema}
If $G$ satisfies the Rapoport condition, there is only one adequate filtration on $\omega_{G,i,\{1/e\}}$ for all $1 \leq i \leq f$.
\end{rema}

We will now define another invariant related to the Verschiebung. 

\begin{prop} \label{hasse}
There exists a map $Hasse_i : \omega_{i,\{1/e\}}^{[1]} \to (\omega_{i-1,\{1/e\}} / \omega_{i-1,\{1/e\}}^{[e-1]})^{(p)}$. 
\end{prop}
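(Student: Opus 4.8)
The plan is to realize $Hasse_i$ literally as ``division by $[\pi]^{e-1}$ followed by the Verschiebung'', i.e. as the composite of $V_i$ with an inverse of the map induced by $[\pi]^{e-1}$. The structural fact I would isolate first is that the Verschiebung on conormal modules is $O_{K,\{1\}}$-linear (the Frobenius twist on the target is exactly what absorbs the semilinearity) and commutes with the $O_F$-action, since $O_F \to \mathrm{End}(G)$ and endomorphisms commute with $V$. In particular $V_i \circ [\pi] = [\pi]^{(p)} \circ V_i$, where $[\pi]^{(p)}$ is the induced action on the twist $(\omega_{i-1})^{(p)}$, and iterating, $V_i \circ [\pi]^{e-1} = ([\pi]^{(p)})^{e-1}\circ V_i$.

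Next I reduce modulo $\mathfrak{m}_{1/e}$ and build the two halves of the composite. On the source, because $[\pi]$ sends $\omega_{i,\{1/e\}}^{[j]}$ into $\omega_{i,\{1/e\}}^{[j-1]}$, the power $[\pi]^{e-1}$ kills $\omega_{i,\{1/e\}}^{[e-1]}$ and lands in $\omega_{i,\{1/e\}}^{[1]}$, so it induces
\[ \overline{[\pi]^{e-1}} : \omega_{i,\{1/e\}}/\omega_{i,\{1/e\}}^{[e-1]} \to \omega_{i,\{1/e\}}^{[1]}, \]
a map of free $O_{K,\{1/e\}}$-modules of rank $d$ whose determinant is, up to a unit, $M_i^{[2]}(G)\cdots M_i^{[e]}(G)$. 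On the target, let $p_{i-1}$ denote the projection $(\omega_{i-1,\{1/e\}})^{(p)} \to (\omega_{i-1,\{1/e\}}/\omega_{i-1,\{1/e\}}^{[e-1]})^{(p)}$ onto the top graded quotient. Since $[\pi]^{(p)}$ maps $(\omega_{i-1,\{1/e\}})^{(p)}$ into $(\omega_{i-1,\{1/e\}}^{[e-1]})^{(p)} = \ker p_{i-1}$, the commutation relation gives $p_{i-1}\circ V_i \circ [\pi] = p_{i-1}\circ [\pi]^{(p)}\circ V_i = 0$, so $p_{i-1}\circ V_i$ kills $\mathrm{im}\,[\pi]$ and hence descends to
\[ \overline{V_i} : \omega_{i,\{1/e\}}/\omega_{i,\{1/e\}}^{[e-1]} \to (\omega_{i-1,\{1/e\}}/\omega_{i-1,\{1/e\}}^{[e-1]})^{(p)}. \]
I then set $Hasse_i := \overline{V_i}\circ(\overline{[\pi]^{e-1}})^{-1}$, which has exactly the source and target claimed.

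The main obstacle is that this recipe is clean only in the Rapoport case: there $\overline{[\pi]^{e-1}}$ is an isomorphism and $\mathrm{im}\,[\pi] = \omega_{i,\{1/e\}}^{[e-1]}$, so $\overline{V_i}$ genuinely descends from the top quotient and the inverse exists. In general $\det \overline{[\pi]^{e-1}}$ has positive valuation $\sum_{j=2}^{e} m_i^{[j]}(G)$ and $\mathrm{im}\,[\pi]$ is strictly smaller than $\omega_{i,\{1/e\}}^{[e-1]}$, so neither the inversion nor the descent is automatic. To handle this I would phrase well-definedness as the kernel inclusion $\ker \overline{[\pi]^{e-1}} \subseteq \ker \overline{V_i}$: feeding $[\pi]^{e-1}y = 0$ into the commutation relation yields $([\pi]^{(p)})^{e-1}V_i(y)=0$, and one must then identify $\ker ([\pi]^{(p)})^{e-1}$ with $\ker p_{i-1} = (\omega_{i-1,\{1/e\}}^{[e-1]})^{(p)}$. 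Controlling these $m_i^{[j]}$-defects is the real work, and I expect to do it by passing to the truncation $O_{K,\{1/e - r_i\}}$, where Proposition~\ref{independance} and Corollary~\ref{indep_m} make the adequate filtration rigid and force the relevant maps to behave as in the Rapoport case. Once the defects are under control the composite is canonical, and its determinant is the primitive invariant $hasse(G)$.
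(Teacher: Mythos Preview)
Your approach has a genuine gap that cannot be repaired by further truncation. The map you call $\overline{[\pi]^{e-1}}:\omega_{i,\{1/e\}}/\omega_{i,\{1/e\}}^{[e-1]}\to\omega_{i,\{1/e\}}^{[1]}$ is, outside the Rapoport case, not surjective: its determinant has valuation $\sum_{j=2}^{e}m_i^{[j]}(G)>0$, so its image is a proper submodule of $\omega_{i,\{1/e\}}^{[1]}$. The kernel-inclusion trick $\ker\overline{[\pi]^{e-1}}\subseteq\ker\overline{V_i}$ therefore only yields a map on $\mathrm{im}\,\overline{[\pi]^{e-1}}$, not on all of $\omega_{i,\{1/e\}}^{[1]}$. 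Passing to $O_{K,\{1/e-r_i\}}$ does not save you: even there $\overline{[\pi]^{e-1}}$ still has determinant of positive valuation and remains non-surjective, and in any case you would at best obtain a map over the smaller ring $O_{K,\{1/e-r_i\}}$, not the map over $O_{K,\{1/e\}}$ that the proposition asserts. The same obstruction blocks the other half of your construction: $p_{i-1}\circ V_i$ kills $\mathrm{im}\,[\pi]$, but $\mathrm{im}\,[\pi]\subsetneq\omega_{i,\{1/e\}}^{[e-1]}$ in general, so $\overline{V_i}$ does not descend to the quotient $\omega_{i,\{1/e\}}/\omega_{i,\{1/e\}}^{[e-1]}$ as you need.

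The missing idea is to perform the division by $[\pi]^{e-1}$ not inside $\omega_i$ but inside the full Dieudonn\'e crystal $\mathcal{E}_{\{1\}}$ of $\overline{G}$. The point is that $\mathcal{E}_{i,\{1/e\}}$ is a \emph{free} $O_{K,\{1/e\}}[X]/X^e$-module (with $X$ acting by $[\pi]$), so there $\ker X=\mathrm{im}\,X^{e-1}$ holds exactly, with no Rapoport hypothesis and no loss of precision. Given $y\in\omega_{i,\{1/e\}}^{[1]}$, view it in $\mathcal{E}_{i,\{1/e\}}$; since $Xy=0$ one can write $y=X^{e-1}z$ with $z\in\mathcal{E}_{i,\{1/e\}}$ well defined modulo $X\mathcal{E}_{i,\{1/e\}}$. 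Now use that the Verschiebung on the crystal lands in the Hodge filtration, $V:\mathcal{E}_{i,\{1/e\}}\to(\omega_{i-1,\{1/e\}})^{(p)}$, and that $X$ maps $(\omega_{i-1,\{1/e\}})^{(p)}$ into $(\omega_{i-1,\{1/e\}}^{[e-1]})^{(p)}$; hence $Vz$ is well defined in the quotient $(\omega_{i-1,\{1/e\}}/\omega_{i-1,\{1/e\}}^{[e-1]})^{(p)}$. This gives $Hasse_i$ over $O_{K,\{1/e\}}$ unconditionally.
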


\begin{proof}
Let $\mathcal{E}_{\{1\}}$ be the contravariant Dieudonné crystal of $\overline{G}$ evaluated at $O_{K,\{1\}}$ (\cite{BBM} section $3.3$). It is a free $O_{K,\{1\}}$-module of rank $efh$. It is endowed with an action of $O_F$ and we claim it is free of rank $h$ over $O_{K,\{1\}} \otimes_{\mathbb{Z}_p} O_F$. Let us justify briefly this assertion. It suffices to prove that $\mathcal{E}_{\{1\}} \otimes_{O_K} k$ is a free $k \otimes_{\mathbb{F}_p} O_F/p$-module. But this module lifts to a $W(k)$-module $\tilde{\mathcal{E}}$, where $W(k)$ is the ring of Witt vectors of $k$ ($\tilde{\mathcal{E}}$ is the classical Dieudonné module of $G \times_{O_K} k$). Since the module $\tilde{\mathcal{E}}$ is automatically free over $W(k) \otimes_{\mathbb{Z}_p} O_F$, this proves the claim. We have a decomposition
$$\mathcal{E}_{\{1\}} = \bigoplus_{i=1}^f \mathcal{E}_{i,\{1\}} \text{,} $$
with $O_{F^{ur}}$ acting on $\mathcal{E}_{i,\{1\}}$ by $\tau_i$. Let us denote by $[\pi]$ the action of $1 \otimes \pi$ on $\mathcal{E}_{\{1\}}$. Each $\mathcal{E}_{i,\{1\}}$ is a free $O_{K,\{1\}} [X]/ X^e$-module of rank $h$, with $X$ acting on it by $[\pi]$. Moreover, the Hodge filtration (\cite{BBM} corollary $3.3.5$) gives an exact sequence
$$0 \to \omega_{i,\{1\}} \to \mathcal{E}_{i,\{1\}} \to \omega_{G^D,i,\{1\}}^\vee \to 0 \text{,} $$
where $G^D$ is the Cartier dual of $G$. We have a Verschiebung map
$$V : \mathcal{E}_{\{1\}} \to \omega_{\{1\}}^{(p)} \text{.} $$
It induces maps 
$$V : \mathcal{E}_{i,\{1\}} \to (\omega_{i-1,\{1\}})^{(p)}$$ 
for all $1 \leq i \leq f$. We can now define the map of the proposition. Let $y \in \omega_{i,\{1/e\}}^{[1]}$; we then have $[\pi] y = 0$. We see $y$ as an element of $\mathcal{E}_{i,\{1/e\}}$, which is a free $O_{K,\{1/e\}} [X]/X^e$-module, with $X$ acting by $[\pi]$. Thus there exists $z \in \mathcal{E}_i$ such that $y = X^{e-1} z$; this element is defined modulo an element of $X \mathcal{E}_i$. Applying $V$, we get an element $Vz \in (\omega_{i-1,\{1/e\}})^{(p)}$. Since $X$ sends $(\omega_{i-1,\{1/e\}})^{(p)}$ into $(\omega_{i-1,\{1/e\}}^{[e-1]})^{(p)}$, the element 
$$Hasse_i(y) := Vz \in (\omega_{i-1,\{1/e\}} / \omega_{i-1,\{1/e\}}^{[e-1]})^{(p)}$$
is well defined.
\end{proof}

The map $Hasse_i$ can then be thought as the composition of the division by $[\pi]^{e-1}$ and the Verschiebung map. Taking the determinant of this map, on get an element $Hasse_i(G) \in O_{K,\{1/e\}}$. The valuation of this element will be noted $hasse_i(G) \in [0,1/e]$. Actually, each choice of an adequate filtration $(F_{i,\{1/e\}}^{[j]})$ for $\omega_{i,\{1/e\}}$ give a map
$$H_i : F_{i,\{1/e\}}^{[1]} \to (\omega_{i-1,\{1/e\}} / F_{i-1,\{1/e\}}^{[e-1]})^{(p)} \text{,} $$
and thus an element $hasse_i \in [0,1/e]$. Fortunately, this element does not depend on the adequate filtration under certain hypotheses.

\begin{prop} \label{indep_h}
Suppose that $hasse_i(G) + \max(r_i(G),r_{i-1}(G)) < 1/e$. Then the element $hasse_i(G)$ can be computed using any adequate filtration on $\omega_{i,\{1/e\}}$ and $\omega_{i-1,\{1/e\}}$.
\end{prop}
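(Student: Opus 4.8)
The plan is to deduce this from the filtration comparison of Proposition~\ref{independance}, the point being that once the relevant pieces of two adequate filtrations agree far enough, the two versions of the Hasse map agree modulo a level below which the valuation we are after is already determined.

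First I would set $\beta = 1/e - \max(r_i(G), r_{i-1}(G))$, so that the hypothesis reads exactly $hasse_i(G) < \beta$. Since $hasse_i(G) \geq 0$, this forces $\max(r_i(G), r_{i-1}(G)) < 1/e$, hence $r_i(G) < 1/e$ and $r_{i-1}(G) < 1/e$, so Proposition~\ref{independance} applies to both $\omega_{i,\{1/e\}}$ and $\omega_{i-1,\{1/e\}}$. Given an adequate filtration $(F_{i,\{1/e\}}^{[j]})$ on $\omega_{i,\{1/e\}}$ and $(F_{i-1,\{1/e\}}^{[j]})$ on $\omega_{i-1,\{1/e\}}$, that proposition (comparing each with the canonical filtration $(\omega^{[j]})$, which is itself adequate) yields $F_{i,\{\beta\}}^{[1]} = \omega_{i,\{\beta\}}^{[1]}$ and $F_{i-1,\{\beta\}}^{[e-1]} = \omega_{i-1,\{\beta\}}^{[e-1]}$, using $\beta \leq 1/e - r_i(G)$ and $\beta \leq 1/e - r_{i-1}(G)$ together with the fact that equality modulo $\mathfrak{m}_{1/e - r}$ passes to equality modulo the smaller truncation $\mathfrak{m}_\beta$.

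Next I would note that the recipe defining $Hasse_i$ in Proposition~\ref{hasse} — lift $y$ with $[\pi]y = 0$ to $z \in \mathcal{E}_i$ with $y = X^{e-1}z$, then take $Vz$ — is a single map that does not itself reference any filtration; the adequate filtration enters only through the choice of source (the submodule $F_{i,\{1/e\}}^{[1]}$, which lies in $\ker[\pi]$ by adequacy) and through the quotient by $F_{i-1,\{1/e\}}^{[e-1]}$ in the target. This construction is compatible with the reduction $O_{K,\{1/e\}} \to O_{K,\{\beta\}}$: the socle relation $\ker(X) = X^{e-1}\mathcal{E}_i$ and the map $V$ both base-change, and the ambiguity in $z$ (modulo $X\mathcal{E}_i$) is still killed in the reduced target. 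Combining this with the previous step, the reductions modulo $\mathfrak{m}_\beta$ of $H_i$ and of $Hasse_i$ become literally the same map $\omega_{i,\{\beta\}}^{[1]} \to (\omega_{i-1,\{\beta\}} / \omega_{i-1,\{\beta\}}^{[e-1]})^{(p)}$.

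Finally, choosing bases of the two sources and of the two target quotients that reduce to a common basis modulo $\mathfrak{m}_\beta$ (possible exactly because the modules coincide there), the matrices of $H_i$ and $Hasse_i$ agree modulo $\mathfrak{m}_\beta$, so $\det H_i \equiv \det Hasse_i \pmod{\mathfrak{m}_\beta}$. As $v(\det Hasse_i) = hasse_i(G) < \beta$, any element congruent to $\det Hasse_i$ modulo $\mathfrak{m}_\beta$ has the same valuation, whence the invariant $hasse_i$ attached to the adequate filtration equals $hasse_i(G)$. The step I expect to demand the most care is the middle one: checking that the ``divide by $[\pi]^{e-1}$ then apply $V$'' construction genuinely reduces modulo $\mathfrak{m}_\beta$ to a single filtration-independent map, i.e.\ that neither the choice of lift $z$ nor the Verschiebung introduces dependence on the filtration beyond the source and the target denominator. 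Everything after that is the soft remark that congruence modulo a level strictly above $hasse_i(G)$ preserves the valuation.
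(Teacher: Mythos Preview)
Your proposal is correct and follows essentially the same approach as the paper: set $\beta = 1/e - \max(r_i(G),r_{i-1}(G))$, invoke Proposition~\ref{independance} so that the two adequate filtrations coincide modulo $\mathfrak{m}_\beta$, observe that the construction of $Hasse_i$ depends on the filtration only through the source $F_{i}^{[1]}$ and the target denominator $F_{i-1}^{[e-1]}$, and conclude because $hasse_i(G) < \beta$. The paper's proof is terser (it takes your ``middle step'' for granted), but the logic is identical.
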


\begin{proof}
Let $r = \max(r_i(G),r_{i-1}(G))$, and $(F_{k,\{1/e\}}^{[j]})$ be adequate filtrations of $\omega_{k,\{1/e\}}$, for $k \in~\{i-~1,i\}$. From Proposition $\ref{independance}$, we get
$$F_{k,\{1/e-r\}}^{[j]} = \omega_{k,\{1/e-r\}}^{[j]}$$
for all $k \in \{i-1,i\}$ and $2 \leq j \leq e$. The map 
$$H_i : F_{i,\{1/e-r\}}^{[1]} \to (\omega_{i-1,\{1/e-r\}} / F_{i-1,\{1/e-r\}}^{[e-1]})^{(p)}$$
has thus a determinant of valuation $hasse_i(G)$. Since this element is strictly less than $1/e - r$, we can conclude. 
\end{proof}

In the ramified case, one can then construct the invariants $m_i^{[j]}(G)$ for $1 \leq i \leq f$, $2 \leq j \leq e$, which depend on the action of $O_F$ on $\omega_G$, and another invariant $hasse_i(G)$ related to the Verschiebung for $1 \leq i \leq f$. One can relate the unramified partial Hasse invariants to these ones.

\begin{prop} \label{compute_hasse}
The Verschiebung induce maps
$$V_i^{[j]} : \omega_{i,\{1/e\}}^{[j]} / \omega_{i,\{1/e\}}^{[j-1]} \to (\omega_{i-1,\{1/e\}}^{[j]} / \omega_{i-1,\{1/e\}}^{[j-1]})^{(p)}$$
for all $1 \leq i \leq f$ and $1 \leq j \leq e$. This map is equal to the composition
$$(M_{i-1}^{[j+1]})^{(p)} \circ \dots \circ (M_{i-1}^{[e]})^{(p)} \circ Hasse_i \circ M_i^{[2]} \circ \dots \circ M_i^{[j]} \text{.} $$
Let $Ha_i^{[j]}(G) \in O_{K,\{1/e\}}$ be the determinant of this map, and $ha_i^{[j]}(G) \in [0,1/e]$ its valuation.
We have the following equalities in $[0,1/e]$
$$ha_i^{[j]}(G) = hasse_i(G) + \sum_{k=2}^j m_i^{[k]}(G) + p \sum_{k=j+1}^e m_{i-1}^{[k]}(G)$$
$$ha_i(G) = e \cdot hasse_i(G) + \sum_{k=2}^e (e+1-k) m_i^{[k]}(G) + p \sum_{k=2}^e (k-1) m_{i-1}^{[k]}(G)$$
for all $1 \leq i \leq f$ and $1 \leq j \leq e$ (we say that the equality $a=b$ holds in $[0,r]$ if $\min(a,r) = \min(b,r)$).
\end{prop}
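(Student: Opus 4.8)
The plan is to reduce everything modulo $\mathfrak{m}_{1/e}$ and to work inside the Dieudonné crystal $\mathcal{E}_{i,\{1/e\}}$, which by the proof of Proposition \ref{hasse} is free over $O_{K,\{1/e\}}[X]/X^e$ with $X$ acting as $[\pi]$. Two facts drive the argument. First, the Verschiebung is $O_F$-equivariant, so $V \circ [\pi] = [\pi]^{(p)} \circ V$ as maps $\mathcal{E}_{i,\{1/e\}} \to (\omega_{i-1,\{1/e\}})^{(p)}$. Second, since $[\pi]$ shifts the filtration down by one step modulo $\mathfrak{m}_{1/e}$, we have $[\pi]^j \omega_{i,\{1/e\}}^{[j]} = 0$, whence $\omega_{i,\{1/e\}}^{[j]} \subseteq \ker(X^j) = X^{e-j}\mathcal{E}_{i,\{1/e\}}$ by freeness. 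Because both asserted equalities are only claimed in $[0,1/e]$, it suffices to compute determinants of $O_{K,\{1/e\}}$-linear maps, and I may use freely that $v$ is additive up to truncation at $1/e$ and that the Frobenius twist multiplies valuations by $p$ (up to the same truncation).

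First I would establish that $V_i^{[j]}$ is well defined together with the composition formula in one stroke. Given $y \in \omega_{i,\{1/e\}}^{[j]}$, the inclusion above lets me write $y = X^{e-j}w$ for some $w \in \mathcal{E}_{i,\{1/e\}}$. Then $V y = ([\pi]^{(p)})^{e-j}(Vw)$ lies in $([\pi]^{(p)})^{e-j}((\omega_{i-1,\{1/e\}})^{(p)}) \subseteq (\omega_{i-1,\{1/e\}}^{[j]})^{(p)}$, since $[\pi]^{(p)}$ shifts the target filtration down by one; applying this with $j$ and with $j-1$ shows that $V$ carries the filtration on $\omega_{i,\{1/e\}}$ into that on $(\omega_{i-1,\{1/e\}})^{(p)}$, which defines $V_i^{[j]}$. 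For the composition formula, note that $M_i^{[2]} \circ \dots \circ M_i^{[j]}$ sends the class of $y$ to $[\pi]^{j-1}y = X^{j-1}y = X^{e-1}w$; by the very definition of $Hasse_i$ (division by $X^{e-1}$ followed by $V$) this element maps to $Vw \bmod (\omega_{i-1,\{1/e\}}^{[e-1]})^{(p)}$. Applying $(M_{i-1}^{[e]})^{(p)}, \dots, (M_{i-1}^{[j+1]})^{(p)}$, that is $e-j$ further copies of $[\pi]^{(p)}$, turns $Vw$ into $([\pi]^{(p)})^{e-j}(Vw) = Vy$ modulo $(\omega_{i-1,\{1/e\}}^{[j-1]})^{(p)}$, which is exactly $V_i^{[j]}(\bar y)$. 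I would check along the way that the ambiguities in $w$ (well defined modulo $X^j\mathcal{E}_{i,\{1/e\}} \subseteq X\mathcal{E}_{i,\{1/e\}}$) and in the lift of $\bar y$ are killed in the relevant quotients.

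Taking determinants of the composition and using multiplicativity gives $ha_i^{[j]}(G)$ as the truncated sum of $hasse_i(G)$, the valuations $m_i^{[k]}(G)$ for $2 \le k \le j$, and the valuations of the twisted factors $(M_{i-1}^{[k]})^{(p)}$ for $j+1 \le k \le e$, the latter contributing $p\,m_{i-1}^{[k]}(G)$ each; this is the first formula. For the total invariant I would use that $\min(ha_i(G),1/e)$ equals the valuation of $\det \overline{V_i}$, where $\overline{V_i}$ is $V_i$ reduced modulo $\mathfrak{m}_{1/e}$. Since $\overline{V_i}$ respects the filtration, choosing bases adapted to $(\omega_{i,\{1/e\}}^{[j]})$ and $(\omega_{i-1,\{1/e\}}^{[j]})^{(p)}$ makes its matrix block triangular with diagonal blocks $V_i^{[j]}$, so $\det \overline{V_i} = \prod_{j=1}^e \det V_i^{[j]}$ and hence $ha_i(G) = \sum_{j=1}^e ha_i^{[j]}(G)$ in $[0,1/e]$. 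Summing the first formula over $j$ and counting, for fixed $k$, that $m_i^{[k]}$ occurs for $j \ge k$ (coefficient $e+1-k$) and $m_{i-1}^{[k]}$ occurs for $j < k$ (coefficient $k-1$) yields the second formula.

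The main obstacle is the composition-formula step: one must manipulate the division by $X^{e-1}$ defining $Hasse_i$ simultaneously with the filtration and the Frobenius twist, and verify that the single element $w$ with $y = X^{e-j}w$ can be used both to express $V_i^{[j]}(\bar y)$ and as the representative in the definition of $Hasse_i$, with all lift ambiguities landing in the correct quotients. The remaining points — the inclusion $\omega_{i,\{1/e\}}^{[j]} \subseteq X^{e-j}\mathcal{E}_{i,\{1/e\}}$, the block-triangular determinant factorization, and the truncated additivity of valuations together with the factor $p$ from the twist — are routine once the reduction modulo $\mathfrak{m}_{1/e}$ is in place.
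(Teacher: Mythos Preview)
Your proposal is correct and follows essentially the same approach as the paper: both work inside $\mathcal{E}_{i,\{1/e\}}$ (free over $O_{K,\{1/e\}}[X]/X^e$), use the inclusion $\omega_{i,\{1/e\}}^{[j]} \subseteq X^{e-j}\mathcal{E}_{i,\{1/e\}}$ to lift $y$ to $w$ with $y = X^{e-j}w$, and then track $Vy = X^{e-j}Vw$ through the composition. Your treatment is in fact slightly more explicit than the paper's for the second formula, spelling out the block-triangular factorization $\det \overline{V_i} = \prod_j \det V_i^{[j]}$ and the combinatorial count, whereas the paper simply says the remaining equalities are obtained by taking valuations of determinants.
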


\begin{proof}
We first prove that the Verschiebung sends $\omega_{i,\{1/e\}}^{[j]}$ into $(\omega_{i-1,\{1/e\}}^{[j]})^{(p)}$. We keep the notations from the previous proposition. Let $y \in \omega_{i,\{1/e\}}^{[j]}$. We see $y$ as an element of $\mathcal{E}_{i,\{1/e\}}$; we have $X^j y =0$ so there exists $z \in \mathcal{E}_i$ such that $y = X^{e-j} z$. Therefore, we have $Vy = X^{e-j} Vz$. But $Vz \in (\omega_{i-1,\{1/e\}})^{(p)}$, and $X$ maps $(\omega_{i-1,\{1/e\}}^{[k]})^{(p)}$ into $(\omega_{i-1,\{1/e\}}^{[k-1]})^{(p)}$ for all $1 \leq k \leq e$. Thus 
$$Vy \in (\omega_{i-1,\{1/e\}}^{[j]})^{(p)} \text{.} $$
Since the Verschiebung respects the filtration on $\omega_{i,\{1/e\}}$, it induces maps on the graded pieces as claimed. \\
Let us write ${V_i^{[j]}}' = (M_{i-1}^{[j+1]})^{(p)} \circ \dots \circ (M_{i-1}^{[e]})^{(p)} \circ Hasse_i \circ M_i^{[2]} \circ \dots \circ M_i^{[j]}$. We will prove that ${V_i^{[j]}}' = V_i^{[j]}$. Let $y \in \omega_{i,\{1/e\}}^{[j]}$; then $y_1:=(M_i^{[2]} \circ \dots \circ M_i^{[j]})(y)$ is equal to $[\pi]^{j-1} y$. Since $[\pi]^j y = 0$, there exists $z \in \mathcal{E}_{i,\{1/e\}}$ such that $ y = X^{e-j} z$. Thus $y_1 = X^{e-1} z$, and $Hasse_i(y_1) = Vz$. Finally, we get
$${V_i^{[j]}}' (y) = [\pi]^{e-j} Hasse_i(y_1) = [\pi]^{e-j} Vz = V (X^{e-j} z) = Vy \text{.} $$
The rest of the equalities are obtained by taking the valuation of the determinant of the previous relation.
\end{proof}

We will also set for $1 \leq j \leq e$
$$ha^{[j]}(G) = \sum_{i=1}^f ha_i^{[j]}(G) \text{.} $$
From the previous proposition, we have 
$$ha^{[j]}(G) = \sum_{i=1}^f \left( hasse_i(G) + \sum_{k=2}^j m_i^{[k]}(G) + p \sum_{k=j+1}^e m_i^{[k]}(G)    \right ) \text{.} $$

The elements $(ha_i^{[j]}(G))_{i,j}$ will be called the \it{partial Hasse invariants}\normalfont. We will call the elements $(ha^{[j]}(G))_j$ the \it ramified partial Hasse invariants\normalfont. Finally, the elements $(hasse_i(G),m_i^{[j]}(G))_{i,j}$ will be called the \it{primitive Hasse invariants}\normalfont.

\begin{rema}
We have the following inequalities
$$ha^{[e]} \leq ha^{[e-1]} \leq \dots \leq ha^{[1]} \leq p \cdot ha^{[e]} \text{.} $$
\end{rema}

If the Hasse invariant is small enough, then so are the invariants $m_i^{[j]}(G)$ and $hasse_i(G)$. In particular, they do not depend on the choice of an ordering for the sets $\Sigma_i$ and can be computed using any adequate filtrations.

\begin{prop} \label{final_indep}
Suppose that $ha(G) < 1/e$. Then the elements $m_i^{[j]}(G)$ and $hasse_i(G)$ can be computed using any adequate filtrations on the modules $\omega_{i,\{1/e\}}$. 
\end{prop}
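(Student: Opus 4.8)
The plan is to show that the hypothesis $ha(G) < 1/e$ forces each primitive invariant to lie well below the threshold $1/e$ needed to invoke the independence results already established, namely Corollary \ref{indep_m} for the $m_i^{[j]}(G)$ and Proposition \ref{indep_h} for the $hasse_i(G)$. The key link is the formula from Proposition \ref{compute_hasse}, which expresses the total (unramified) invariant $ha_i(G)$ as a nonnegative linear combination of $hasse_i(G)$ and the $m_i^{[k]}(G)$, $m_{i-1}^{[k]}(G)$, with strictly positive integer coefficients. Since all the primitive invariants are nonnegative, each one is bounded above by a suitable partial sum, and these partial sums are in turn controlled by $ha(G) = \sum_i ha_i(G)$.

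First I would fix $i$ and read off from the second displayed equality of Proposition \ref{compute_hasse},
$$ha_i(G) = e \cdot hasse_i(G) + \sum_{k=2}^e (e+1-k)\, m_i^{[k]}(G) + p \sum_{k=2}^e (k-1)\, m_{i-1}^{[k]}(G),$$
that since every term on the right is nonnegative and the total $ha(G) = \sum_{i=1}^f ha_i(G) < 1/e$, we must have each $ha_i(G) < 1/e$ as well. Because the coefficients of $hasse_i(G)$ and of each $m_i^{[k]}(G)$ (for $2 \le k \le e$) are positive integers, this already gives crude bounds such as $hasse_i(G) < 1/(e^2)$ and $m_i^{[k]}(G) < ha_i(G) < 1/e$. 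The subtle point is that to apply Corollary \ref{indep_m} and Proposition \ref{indep_h} I do not merely need each invariant to be less than $1/e$; I need the \emph{combined} quantities $m_i^{[j]}(G) + r_i(G)$ and $hasse_i(G) + \max(r_i(G), r_{i-1}(G))$ to be strictly less than $1/e$, where $r_i(G) = \sum_{k=2}^e (k-1)\, m_i^{[k]}(G)$.

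Next I would verify these stronger bounds directly. For the $m$-invariants, observe that $m_i^{[j]}(G) + r_i(G) = m_i^{[j]}(G) + \sum_{k=2}^e (k-1)\, m_i^{[k]}(G)$ is a nonnegative combination of the $m_i^{[k]}(G)$ in which the coefficient of each $m_i^{[k]}(G)$ is at most $e$; comparing coefficient by coefficient against the expression for $ha_i(G)$ above (where $m_i^{[k]}(G)$ appears with coefficient $e+1-k \ge 1$ inside the second sum, and where the third sum contributing to $ha_{i+1}(G)$ gives $m_i^{[k]}(G)$ the coefficient $p(k-1)$), I expect to dominate $m_i^{[j]}(G) + r_i(G)$ by a fixed multiple of $ha_i(G) + ha_{i+1}(G) \le ha(G) < 1/e$. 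The cleanest route is to bound $r_i(G) \le \sum_{k=2}^e (k-1)\, m_i^{[k]}(G) \le \sum_{k=2}^e p(k-1)\, m_i^{[k]}(G) = ha_{i+1}(G) - e\cdot hasse_{i+1}(G) - \sum_{k=2}^e(e+1-k) m_{i+1}^{[k]}(G) \le ha_{i+1}(G)$, and similarly to estimate $hasse_i(G) \le ha_i(G)/e$, so that both combined quantities are bounded by $ha(G) < 1/e$.

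The main obstacle I anticipate is purely bookkeeping: matching up the index shifts ($i$ versus $i-1$ versus $i+1$) correctly so that the upper bounds genuinely land below $1/e$ rather than merely below $1$, and being careful that the equalities in Proposition \ref{compute_hasse} hold only ``in $[0,1/e]$'' (i.e. after truncation), so the comparison of valuations is valid precisely in the regime where all quantities stay below $1/e$. Once the inequalities $m_i^{[j]}(G) + r_i(G) < 1/e$ and $hasse_i(G) + \max(r_i(G), r_{i-1}(G)) < 1/e$ are in hand, the conclusion is immediate: Corollary \ref{indep_m} shows the $m_i^{[j]}(G)$ are independent of the ordering of $\Sigma_i$ and computable from any adequate filtration, and Proposition \ref{indep_h} does the same for $hasse_i(G)$.
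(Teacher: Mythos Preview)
Your proposal is correct and follows the same approach as the paper: use the formula from Proposition~\ref{compute_hasse} to bound the primitive invariants by $ha(G)$, then feed those bounds into Corollary~\ref{indep_m} and Proposition~\ref{indep_h}. The only place your sketch wobbles is the step $ha_i(G) + ha_{i+1}(G) \le ha(G)$, which fails when $f=1$; the paper sidesteps this by observing directly that $hasse_{i'}(G) + p\cdot r_i(G) \le ha(G) < 1/e$ for \emph{any} pair $i,i'$ (both summands occur with coefficient $\ge 1$ in the expansion of $ha(G)$), and then using $m_i^{[j]}(G) \le r_i(G)$ to get $m_i^{[j]}(G) + r_i(G) \le 2r_i(G) \le p\cdot r_i(G) < 1/e$.
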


\begin{proof}
From the assumption $ha(G) < 1/e$, we easily get
$$ hasse_{i'} (G) + p \cdot r_i(G) < 1/e$$
for any elements $i,i'$ between $1$ and $f$. We get $1/e > p \cdot r_i(G) \geq 2 r_i(G) \geq r_i(G) + m_i^{[j]}(G)$ for any $1 \leq i \leq f$ and $2 \leq j \leq e$, so that the hypothesis of Corollary $\ref{indep_m}$ is satisfied. We also get $1/e >~hasse_i(G) +~\max(r_i(G),r_{i-1}(G))$ and the hypothesis of Proposition $\ref{indep_h}$ is satisfied for all $1 \leq i \leq f$.
\end{proof}

\subsection{Compatibility with duality} \label{dual}

The Hasse invariants we defined satisfy a compatibility with duality. We write $G^D$ for the Cartier dual of $G$. It is a $p$-divisible group over $O_K$ with an action of $O_F$. It has height $efh$ and dimension $ef(h-d)$. We start with the following lemma.

\begin{lemm} \label{dieudonne}
There exists a free $O_K \otimes_{\mathbb{Z}_p} O_F$-module $\mathcal{E}$ of rank $h$ with an exact sequence of $O_K \otimes_{\mathbb{Z}_p} O_F$-modules
$$0 \to \omega_G \to \mathcal{E} \to \omega_{G^D}^\vee \to 0   \text{.} $$
\end{lemm}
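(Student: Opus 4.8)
The plan is to realise $\mathcal{E}$ as the integral (i.e.\ over $O_K$, not merely over $O_K/p$) evaluation of the very Dieudonn\'e crystal whose mod $p$ incarnation $\mathcal{E}_{\{1\}}$ already served in the proof of Proposition \ref{hasse}; the exact sequence will then be the Hodge filtration read integrally, and the freeness over $O_K \otimes_{\mathbb{Z}_p} O_F$ will be checked by reduction to the special fibre. Concretely, I would evaluate the contravariant Dieudonn\'e crystal of $\overline{G} = G \times_{O_K} O_K/p$ at the divided power thickening $O_K \twoheadrightarrow O_K/p$, the ideal $(p)$ carrying its canonical divided powers since $v(p^n/n!) \to \infty$. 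This produces a free $O_K$-module $\mathcal{E}$ of rank the height $efh$ of $G$, reducing modulo $p$ to the module of Proposition \ref{hasse}, and the Hodge filtration (\cite{BBM} corollary $3.3.5$), now over $O_K$, gives
$$0 \to \omega_G \to \mathcal{E} \to \omega_{G^D}^\vee \to 0 \text{.}$$
Functoriality of the crystal equips $\mathcal{E}$ with an $O_F$-action making all three maps equivariant, so this is a sequence of $O_K \otimes_{\mathbb{Z}_p} O_F$-modules. (If one prefers not to evaluate on the non-$p$-nilpotent base $O_K$ directly, one evaluates on each $O_K/p^n \twoheadrightarrow O_K/p$ and passes to the inverse limit, using that $O_K$ is $p$-adically complete and separated.)

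It remains to show that $\mathcal{E}$ is free of rank $h$ over $R := O_K \otimes_{\mathbb{Z}_p} O_F$. Since $F^{ur} \subset K$ we have $O_K \otimes_{\mathbb{Z}_p} O_{F^{ur}} \cong \prod_{i=1}^f O_K$, hence $R \cong \prod_{i=1}^f R_i$ with $R_i := O_K \otimes_{O_{F^{ur}}, \tau_i} O_F$, and correspondingly $\mathcal{E} = \bigoplus_{i=1}^f \mathcal{E}_i$, each $\mathcal{E}_i$ being a finitely generated $R_i$-module that is an $O_K$-direct summand of $\mathcal{E}$, hence $O_K$-free. Writing $O_F = O_{F^{ur}}[\pi]$ with $\pi$ a root of an Eisenstein polynomial $E \in O_{F^{ur}}[X]$ and letting $E_i \in O_K[X]$ be its image under $\tau_i$, one gets $R_i = O_K[X]/(E_i)$, which is finite free of rank $e$ over $O_K$ and local with reduction $R_i \otimes_{O_K} k = k[X]/(X^e)$ and residue field $k$. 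It therefore suffices to prove that each $\mathcal{E}_i$ is free of rank $h$ over $R_i$.

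For this I pass to the special fibre exactly as in Proposition \ref{hasse}: $\mathcal{E} \otimes_{O_K} k$ is the reduction $\tilde{\mathcal{E}} \otimes_{W(k)} k$ of the classical Dieudonn\'e module $\tilde{\mathcal{E}}$ of $G \times_{O_K} k$. Now $W(k) \otimes_{\mathbb{Z}_p} O_F \cong \prod_i \bigl( W(k) \otimes_{O_{F^{ur}}, \tau_i} O_F \bigr)$, and each factor is $W(k)[\pi]$ with $\pi$ satisfying an Eisenstein equation over the discrete valuation ring $W(k)$, hence is itself a discrete valuation ring; as $\tilde{\mathcal{E}}$ is $W(k)$-free it is torsion-free and so free over $W(k) \otimes_{\mathbb{Z}_p} O_F$, necessarily of rank $h$. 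Reducing modulo $p$, $\mathcal{E}_i \otimes_{O_K} k$ is free of rank $h$ over $k[X]/(X^e)$, so $\mathcal{E}_i$ has $O_K$-rank $eh$, the same as $R_i^{\,h}$. Lifting a $k[X]/(X^e)$-basis of $\mathcal{E}_i \otimes_{O_K} k$ to elements $e_1, \dots, e_h \in \mathcal{E}_i$ and forming the $R_i$-linear map $\phi : R_i^{\,h} \to \mathcal{E}_i$ sending the standard basis to the $e_j$, the reduction $\phi \otimes_{O_K} k$ is an isomorphism of $k$-vector spaces; since $\phi$ is a map of free $O_K$-modules of equal finite rank, its determinant is a unit and $\phi$ is an isomorphism. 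Thus $\mathcal{E}_i \cong R_i^{\,h}$, and summing over $i$ yields $\mathcal{E} \cong R^h$.

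The only genuinely delicate point is the first step: making sense of the crystal evaluation and of the Hodge exact sequence integrally over $O_K$ (whereas Proposition \ref{hasse} needed them only over $O_K/p$), and identifying the reduction of $\mathcal{E}$ on the special fibre with the classical Dieudonn\'e module. Granting these standard inputs, the freeness is a routine Nakayama/determinant argument whose crux is the numerical coincidence that $R_i^{\,h}$ and $\mathcal{E}_i$ share the $O_K$-rank $eh$.
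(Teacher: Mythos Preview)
Your proof is correct and follows essentially the same route as the paper: the paper also defines $\mathcal{E}$ as the inverse limit of the crystalline evaluations $\mathcal{E}_{\{n\}}$ over $O_{K}/p^{n}$, invokes the Hodge filtration at each finite level, and deduces freeness over $O_K\otimes_{\mathbb{Z}_p}O_F$ by the special-fibre argument via the classical Dieudonn\'e module over $W(k)$ (referring back to Proposition~\ref{hasse}). Your version simply spells out the Nakayama/determinant lift that the paper leaves implicit, and your observation that each $R_i=O_K[X]/(E_i)$ is local with residue ring $k[X]/(X^e)$ is exactly what makes that lift work.
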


\begin{proof}
For all integer $n \geq 1$, let $\mathcal{E}_{\{n\}}$ be the contravariant Dieudonné crystal of $G \times_{O_K} O_{K,\{n\}}$ evaluated at $O_{K,\{n\}}$ (\cite{BBM} section $3.3$). It is a free $O_{K,\{n\}}$-module of rank $efh$ with an action of $O_F$. As we have seen in the proof of Proposition $\ref{hasse}$, it is free as an $O_{K,\{n\}} \otimes_{\mathbb{Z}_p} O_F$-module. Define
$$\mathcal{E} := \varprojlim_n \mathcal{E}_{\{n\}} \text{.} $$
It is a free $O_K \otimes_{\mathbb{Z}_p} O_F$-module of rank $h$. The Hodge filtration (\cite{BBM} corollary $3.3.5$) gives exact sequences for all integer $n \geq 1$
$$0 \to \omega_{G,\{n\}} \to \mathcal{E}_{\{n\}} \to \omega_{G^D,\{n\}}^\vee \to 0  \text{.}  $$
This concludes the proof.
\end{proof}

We now state the duality property verified by the Hasse invariants.

\begin{prop}
We have the equalities $ha(G)=ha(G^D)$ and $ha_i(G) = ha_i(G^D)$ for all $1 \leq~i \leq~f$. We also have $m_i^{[j]}(G) = m_i^{[j]}(G^D)$ for all $1 \leq i \leq f$ and $2 \leq j \leq e$. Suppose moreover that $ha(G) < 1/e$. Then $hasse_i(G) = hasse_i(G^D)$ for all $1 \leq i \leq f$.
\end{prop}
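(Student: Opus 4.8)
The key structural input is Lemma \ref{dieudonne}: there is a free $O_K \otimes_{\mathbb{Z}_p} O_F$-module $\mathcal{E}$ of rank $h$ sitting in an exact sequence $0 \to \omega_G \to \mathcal{E} \to \omega_{G^D}^\vee \to 0$, compatibly with the $O_F$-action. The whole proof should rest on the observation that the three quantities $ha$, $ha_i$, and $m_i^{[j]}$ are defined purely in terms of the $O_K \otimes O_F$-module structure of $\omega_G$ together with the Verschiebung, and that this structure is \emph{self-dual} in a suitable sense through $\mathcal{E}$. First I would treat the two easy cases, then the subtle one ($hasse_i$).

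\textbf{Step 1: the invariants $m_i^{[j]}$.} These depend only on $\omega_G$ as an $O_K \otimes_{\mathbb{Z}_p} O_F$-module (this is stated explicitly in the excerpt). Decomposing under $O_{F^{ur}}$, the exact sequence of Lemma \ref{dieudonne} gives $0 \to \omega_{G,i} \to \mathcal{E}_i \to \omega_{G^D,i}^\vee \to 0$ with $\mathcal{E}_i$ free over $O_K[X]/X^e$ (where $X$ acts by $[\pi]$). I would reduce modulo $\mathfrak{m}_{1/e}$ and compute the $[\pi]$-action on the graded pieces of both $\omega_{G,i}$ and $\omega_{G^D,i}^\vee$. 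Because $\mathcal{E}_i$ is free over $O_{K,\{1/e\}}[X]/X^e$, the $[\pi]$-maps on the graded pieces of the sub and of the quotient are governed by the same structure constants; concretely, the elementary divisors of $[\pi]$ on a free $O_{K,\{1/e\}}[X]/X^e$-module are distributed symmetrically between any saturated submodule and its quotient. This should force $m_i^{[j]}(G) = m_i^{[j]}(G^D)$ directly, with the dualization $(\cdot)^\vee$ reversing the indexing in a way that preserves each individual determinant.

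\textbf{Step 2: the total and unramified invariants.} For $ha(G) = ha(G^D)$ and $ha_i(G) = ha_i(G^D)$, the standard tool is the compatibility of Verschiebung and Frobenius under Cartier duality: the Verschiebung of $G$ corresponds to the Frobenius of $G^D$. Working with $\mathcal{E}_{\{1\}}$ and its Frobenius $F$ and Verschiebung $V$ satisfying $FV = VF = p$, the determinant of $V_i$ on $\omega_{i,\{1\}}$ and the determinant of the dual map on $\omega_{G^D,i,\{1\}}$ have complementary valuations summing to the valuation of $\det$ of multiplication by $p$, and a short bookkeeping argument with the exact sequence yields equality of the truncated valuations index by index. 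This is essentially the classical fact that the Hasse invariant is self-dual, refined along the $\tau_i$; I expect this to be routine.

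\textbf{Step 3: the invariant $hasse_i$, under $ha(G) < 1/e$.} This is where I expect the main obstacle. Unlike $m_i^{[j]}$, the element $hasse_i$ genuinely involves the Verschiebung, so self-duality of $\omega_G$ as a module does not suffice; one needs the $F$–$V$ duality together with the hypothesis $ha(G) < 1/e$, which (via Proposition \ref{final_indep}) guarantees $hasse_i$ is well defined independently of the adequate filtration and that all the primitive invariants are small. The natural strategy is to combine the two displayed identities of Proposition \ref{compute_hasse}, namely $ha_i(G) = e\cdot hasse_i(G) + \sum_{k=2}^e(e+1-k)m_i^{[k]}(G) + p\sum_{k=2}^e(k-1)m_{i-1}^{[k]}(G)$, with the already-established equalities $ha_i(G) = ha_i(G^D)$ and $m_i^{[k]}(G) = m_i^{[k]}(G^D)$. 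Solving for $hasse_i$ should then express it, in the range $[0,1/e)$, as a fixed linear combination of the $ha_i$ and the $m_{i'}^{[k]}$; since every term on the right is duality-invariant and the smallness hypothesis keeps everything inside the interval where the truncated-valuation arithmetic is honest (i.e.\ the ``$=$ in $[0,r]$'' relations become genuine equalities), I would conclude $hasse_i(G) = hasse_i(G^D)$. The delicate point to check carefully is that the coefficient of $hasse_i$ in the inverted system is a unit and that the required linear combination stays below $1/e$, so that the truncation does not destroy the equality; verifying this compatibility of the smallness bounds with the coefficients appearing in Proposition \ref{compute_hasse} is the crux of the argument.
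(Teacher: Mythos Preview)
Your plan is essentially the paper's own: it cites Fargues for $ha(G)=ha(G^D)$ and its $\tau_i$-refinement, carries out the computation of $m_i^{[j]}(G)=m_i^{[j]}(G^D)$ via the exact sequence of Lemma~\ref{dieudonne}, and then deduces $hasse_i(G)=hasse_i(G^D)$ exactly as you do in Step~3, by solving the displayed relation of Proposition~\ref{compute_hasse} once $ha(G)<1/e$ forces it to be a genuine equality. The only place where your sketch is looser than the paper is Step~1: the slogan about elementary divisors being ``distributed symmetrically between any saturated submodule and its quotient'' is not a theorem one can invoke, and the paper instead defines an explicit intermediate filtration $\mathcal{F}_i^{[j]}\subset\mathcal{E}_i$ with $\mathcal{F}_i^{[j]}/\omega_{G,i}=(\omega_{G^D,i}/\omega_{G^D,i}^{[j]})^\vee$, then computes the $[\pi]$-map on the graded pieces by choosing adapted bases and using the elementary divisors theorem on $M_i^{[2]}$, proceeding by induction on $j$.
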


\begin{proof}
The relation $ha(G) = ha(G^D)$ is proved in \cite{Fa} Proposition $2$. The same proof (decomposing each module according to the elements of $\mathcal{T}$) gives the equalities $ha_i(G) =~ha_i(G^D)$ for $1 \leq i \leq f$. \\
We will now prove that $m_i^{[j]} (G^D) = m_i^{[j]} (G)$ for all $1 \leq i \leq f$ and $2 \leq j \leq e$. This will allow us to conclude thanks to the relation in $[0,1/e]$
$$ha_i(G) = e \cdot hasse_i(G) + \sum_{k=2}^e (e+1-k) m_i^{[k]}(G) + p \sum_{k=2}^e (k-1) m_{i-1}^{[k]}(G) \text{.} $$
Let us fix an integer $i$ between $1$ and $f$. The free $O_K \otimes_{\mathbb{Z}_p} O_F$-module $\mathcal{E}$ decomposes in 
$$\mathcal{E} = \bigoplus_{i=1}^f \mathcal{E}_i \text{,} $$
where $\mathcal{E}_i$ is a free $O_K \otimes_{O_{F^{ur}},\tau_i} O_F$-module. Note the equality
$$O_K \otimes_{O_{F^{ur}},\tau_i} O_F = O_K[X] / \prod_{k=1}^e (X - \sigma_{i,k}(\pi)) \text{.} $$
We will denote by $[\pi]$ the action of $\pi$ in $\mathcal{E}_i$. Recall the exact sequence
$$0 \to \omega_{G,i} \to \mathcal{E}_i \to \omega_{G^D,i}^\vee \to 0   \text{.} $$
To ease the notations, let us write $\pi_k:= \sigma_{i,k}(\pi)$ for $1 \leq k \leq e$. Let us define
$$\mathcal{F}_i^{[j]} := \left\{ y \in \mathcal{E}_i, \left( \prod_{k=j+1}^e ([\pi] - \pi_k) \right) \cdot y \in \omega_{G,i}^{[j]} \right\}$$
for $0 \leq j \leq e$. The module $\mathcal{F}_i^{[j]}$ is free of rank $he-j(h-d)$ over $O_K$. Furthermore, since $([\pi] - \pi_k) \omega_{G,i}^{[k]} \subset \omega_{G,i}^{[k-1]}$ for all $1 \leq k \leq e$, we have inclusions
$$0 \subset \omega_{G,i}^{[1]} \subset \dots \subset \omega_{G,i}^{[e]} = \mathcal{F}_i^{[e]} \subset \dots \subset \mathcal{F}_i^{[0]} = \mathcal{E}_i \text{.} $$
Moreover, the map $[\pi]$ acts by $\pi_j$ on $\mathcal{F}_i^{[j-1]} / \mathcal{F}_i^{[j]}$ for all $1 \leq j \leq e$. We thus have a filtration
$$0 \subset (\mathcal{E}_i / \mathcal{F}_i^{[1]})^\vee \subset \dots \subset (\mathcal{E}_i / \mathcal{F}_i^{[e]})^\vee = \omega_{G^D,i} \text{,} $$
with $(\mathcal{E}_i / \mathcal{F}_i^{[j]})^\vee$ free of rank $j(h-d)$, and $[\pi]$ acts on the quotient $(\mathcal{E}_i / \mathcal{F}_i^{[j]})^\vee / (\mathcal{E}_i / \mathcal{F}_i^{[j-1]})^\vee$ by $\pi_j$, for $1 \leq j \leq e$. This proves that $(\mathcal{E}_i / \mathcal{F}_i^{[j]})^\vee = \omega_{G^D,i}^{[j]}$, or in other terms, 
$$\mathcal{F}_i^{[j]} / \omega_{G,i} = (\omega_{G^D,i} / \omega_{G^D,i}^{[j]})^\vee \text{.} $$
We have thus related the filtration on $\omega_{G^D,i}$ to the one on $\omega_{G,i}$. We want to compute the element $m_i^{[2]}(G^D)$. For this, one can work with $\mathcal{E}_{i,\{1/e\}}$, which is a free $O_{K,\{1/e\}}[X]/X^e$-module, with $X$ acting by $[\pi]$. Note that since $\omega_{G,i}^{[j]}$ is contained in the set of elements killed by $\prod_{k=1}^j ([\pi] - \pi_k)$, we have
$$\mathcal{F}_{i,\{1/e\}}^{[j]} = \left\{ y \in \mathcal{E}_{i,\{1/e\}} ,  X^{e-j} y \in \omega_{G,i,\{1/e\}}^{[j]} \right\} \text{.} $$
The action of $[\pi]$ on $\omega_{G,i,\{1/e\}}^{[2]}$ is of the form
 \begin{displaymath}
\left(
\begin{array}{cc}
0 & M_i^{[2]} \\
& 0 
\end{array}
\right) \text{,} 
\end{displaymath}
with the valuation of the determinant of $M_i^{[2]}$ equal to $m_i^{[2]}(G)$. From the elementary divisors theorem for valuation rings, one can moreover suppose that $M_i^{[2]}$ is diagonal. Let us write $y_1, \dots, y_d$ the diagonal coefficients; we order them so that $y_1, \dots, y_{r}$ are not units, and $y_{r+1}, \dots, y_d$ are. We can thus find a basis $(e_1, \dots, e_{2d})$ of $\omega_{G,i,\{1/e\}}^{[2]}$ such that $\omega_{G,i,\{1/e\}}^{[1]}$ is generated by $(e_1, \dots, e_d)$, and $[\pi] e_{d+k}=y_k e_k$ for all $1 \leq k \leq d$. One may then find a basis $(\varepsilon_1, \dots, \varepsilon_h)$ of $\mathcal{E}_{i,\{1/e\}}$ over $O_{K,\{1/e\}}[X] /X^e$ such that 
\begin{itemize}
\item $e_k = X^{e-1} \varepsilon_k$ for $1 \leq k \leq d$.
\item $e_{d+k} = X^{e-1} \varepsilon_{d+k} + X^{e-2} y_{k} \varepsilon_{k}$ for $1 \leq k \leq r$.
\item $e_{d+k} = X^{e-2} y_k \varepsilon_k$ for $r+1 \leq k \leq d$.
\end{itemize}
Note that one has necessarily $d+r \leq h$. We then see that $\mathcal{F}_{i,\{1/e\}}^{[1]}$ is generated by $X \mathcal{E}_{i,\{1/e\}}$ and $(\varepsilon_k)_{1 \leq k \leq d}$. The module $\mathcal{F}_{i,\{1/e\}}^{[2]}$ is generated by $X^2 \mathcal{E}_{i,\{1/e\}}$ and $(X \varepsilon_k)_{1 \leq k \leq d}$, $(X \varepsilon_{d+k} + y_k \varepsilon_k)_{1 \leq k \leq r}$, $(y_k \varepsilon_k)_{r+1 \leq k \leq d}$. We then may take $(\varepsilon_{d+1}, \dots, \varepsilon_{h})$ for a basis of $\mathcal{E}_{i,\{1/e\}} / \mathcal{F}_{i,\{1/e\}}^{[1]}$, and $(\varepsilon_1, \dots, \varepsilon_r, X \varepsilon_{d+r+1}, \dots, X \varepsilon_h)$ for a basis of $\mathcal{F}_{i,\{1/e\}}^{[1]} / \mathcal{F}_{i,\{1/e\}}^{[2]}$. With these bases, the matrix of $[\pi] : \mathcal{E}_{i,\{1/e\}} / \mathcal{F}_{i,\{1/e\}}^{[1]} \to \mathcal{F}_{i,\{1/e\}}^{[1]} / \mathcal{F}_{i,\{1/e\}}^{[2]}$ is equal to
\begin{displaymath}
\left(
\begin{array}{cccccc}
- y_1 & & & & & \\
& \ddots & & & & \\
& & - y_r & & & \\
& & & 1 & & \\
& & & & \ddots & \\
& & & & & 1
\end{array}
\right) \text{.} 
\end{displaymath}
Indeed, we have the relation $X \varepsilon_{d+k} + y_k \varepsilon_k =0$ in $\mathcal{F}_{i,\{1/e\}}^{[1]} / \mathcal{F}_{i,\{1/e\}}^{[2]}$ for all $1 \leq k \leq r$. In particular, the determinant of this matrix has valuation $m_i^{[2]}(G)$. This proves that $m_i^{[2]}(G) = m_i^{[2]}(G^D)$. \\
Considering $\mathcal{F}_{i,\{1/e\}}^{[1]} / \omega_{G,i,\{1/e\}}^{[1]}$, which is a free $O_{K,\{1/e\}}[X]/X^{e-1}$-module of rank $h$, one can prove by induction that the action of $[\pi]$ on $\omega_{G^D,i,\{1/e\}}^\vee$ is of the form
 \begin{displaymath}
\left(
\begin{array}{cccc}
0 & {M_i^{[e]}}' & \dots & * \\
 & 0 & \ddots & \vdots \\
 & & \ddots & {M_i^{[2]}}' \\
& & & 0
\end{array}
\right) \text{,} 
\end{displaymath}
with the property that the determinant of ${M_i^{[j]}}'$ has valuation $m_i^{[j]}(G)$ for $2 \leq j \leq e$. This concludes the proof.
\end{proof}

\subsection{Partial Hasse invariants in family}

Let $S$ be an $O_K$-scheme. In this section only, we will consider a $p$-divisible group $G \to S$ of height $efh$ with an action of $O_F$. Let $\omega_{G/S}$ be the conormal sheaf of $G$ along its unit section; it is a locally free sheaf over $S$. It also has an action of $O_F$, and thus decomposes into $\omega_{G/S} = \oplus_{i=1}^f \omega_{G/S,i}$. We also make the following hypothesis.

\begin{hypo}
For each integer $1 \leq i \leq f$, there exists a filtration 
$$0 = \omega_{G/S,i}^{[0]} \subset \omega_{G/S,i}^{[1]} \subset \dots \subset \omega_{G/S,i}^{[e]} = \omega_{G/S,i} \text{,} $$
such that for all $1 \leq j \leq e$, $\omega_{G/S,i}^{[j]} / \omega_{G/S,i}^{[j-1]}$ is a locally free sheaf of rank $d$, and $O_F$ acts by $\sigma_{i,j}$ on it.
\end{hypo}

This hypothesis is for example satisfied when one considers certain moduli spaces of abelian varieties satisfying the Pappas-Rapoport condition (\cite{P-R}). It implies that the dimension of $G$ over $S$ is equal to $def$. Each $\omega_{G/S,i}$ is then a locally free sheaf of rank $ed$. We will also define
$$\mathcal{L}_{S,i}^{[j]} := \det (\omega_{G/S,i}^{[j]} / \omega_{G/S,i}^{[j-1]})$$
for all $1 \leq i \leq f$ and $1 \leq j \leq e$. It is an invertible sheaf over $S$. We will define the partial Hasse invariants as sections of certain products of these invertible sheaves. For this, we need to work over $O_{K,\{1/e\}}$. Let $S_{\{1/e\}} := S \times_{O_K} O_{K,\{1/e\}}$ 

\begin{prop}
The $Verschiebung$ map induces sections 
$$Ha_i^{[j]} \in H^0(S_{\{1/e\}},(\mathcal{L}_{S_{\{1/e\}},i-1}^{[j]})^{\otimes p} \otimes  (\mathcal{L}_{S_{\{1/e\}},i}^{[j]})^{-1})$$
for all $1 \leq i \leq f$ and $1 \leq j \leq e$. The primitive Hasse invariants are sections
$$Hasse_i \in H^0(S_{\{1/e\}},(\mathcal{L}_{S_{\{1/e\}},i-1}^{[e]})^{\otimes p} \otimes  (\mathcal{L}_{S_{\{1/e\}},i}^{[1]})^{-1})$$
and 
$$M_i^{[j]} \in H^0(S_{\{1/e\}},\mathcal{L}_{S_{\{1/e\}},i}^{[j-1]} \otimes  (\mathcal{L}_{S_{\{1/e\}},i}^{[j]})^{-1})$$
for $1 \leq i \leq f$ and $2 \leq j \leq e$. Moreover, one has the relations for all $1 \leq i \leq f$ and $1 \leq j \leq e$
$$Ha_i^{[j]} = (M_{i-1}^{[j+1]})^{ p} \dots (M_{i-1}^{[e]})^{ p} \cdot  Hasse_i \cdot M_i^{[2]} \dots M_i^{[j]}$$
\end{prop}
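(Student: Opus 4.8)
The plan is to carry out, in the relative setting over $S_{\{1/e\}}$, the three constructions of the previous subsection, replacing free $O_{K,\{1/e\}}$-modules by locally free $\mathcal{O}_{S_{\{1/e\}}}$-sheaves and replacing the determinant of a map in $O_{K,\{1/e\}}$ by the determinant of a morphism between two locally free sheaves of the same rank, which is a global section of the tensor product of the determinant of the target with the inverse of the determinant of the source. The single new ingredient is family Dieudonn\'e theory: for each $n$ one has the contravariant Dieudonn\'e crystal of $G \times_S S_{\{n\}}$ evaluated on $\mathcal{O}_{S_{\{n\}}}$, a locally free sheaf $\mathcal{E}_{\{n\}}$ of rank $efh$ with an $O_F$-action, together with the Hodge filtration $0 \to \omega_{G/S,\{n\}} \to \mathcal{E}_{\{n\}} \to \omega_{G^D/S,\{n\}}^\vee \to 0$ and the Verschiebung $V \colon \mathcal{E}_{\{n\}} \to \omega_{G/S,\{n\}}^{(p)}$, exactly as in Proposition \ref{hasse} and Lemma \ref{dieudonne} but now over $S_{\{n\}}$. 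As in the pointwise case, $\mathcal{E}_{\{n\}}$ is Zariski-locally free of rank $h$ over $\mathcal{O}_{S_{\{n\}}} \otimes_{\mathbb{Z}_p} O_F$; this is checked fibrewise by reduction to the special fibre and the automatic freeness of the classical Dieudonn\'e module over $W(k) \otimes_{\mathbb{Z}_p} O_F$, so it holds after passing to a suitable open cover.

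The maps $M_i^{[j]}$ and $V_i^{[j]}$ present no difficulty. Decomposing $\mathcal{E}_{\{1/e\}} = \oplus_i \mathcal{E}_{i,\{1/e\}}$ under $O_{F^{ur}}$, the operator $[\pi] = 1 \otimes \pi$ preserves the filtration $(\omega_{G/S,i,\{1/e\}}^{[j]})_j$ of the hypothesis and sends $\omega_{G/S,i,\{1/e\}}^{[j]}$ into $\omega_{G/S,i,\{1/e\}}^{[j-1]}$; it therefore induces morphisms of locally free rank-$d$ sheaves $M_i^{[j]} \colon \omega_{G/S,i,\{1/e\}}^{[j]}/\omega_{G/S,i,\{1/e\}}^{[j-1]} \to \omega_{G/S,i,\{1/e\}}^{[j-1]}/\omega_{G/S,i,\{1/e\}}^{[j-2]}$, whose determinant is a section of $\mathcal{L}_{S_{\{1/e\}},i}^{[j-1]} \otimes (\mathcal{L}_{S_{\{1/e\}},i}^{[j]})^{-1}$. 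Likewise the argument of Proposition \ref{compute_hasse} shows, word for word, that $V$ carries $\omega_{G/S,i,\{1/e\}}^{[j]}$ into $(\omega_{G/S,i-1,\{1/e\}}^{[j]})^{(p)}$, hence induces $V_i^{[j]}$ on the graded pieces; since the Frobenius twist of an invertible sheaf is its $p$-th tensor power, $\det V_i^{[j]}$ is the claimed section $Ha_i^{[j]}$ of $(\mathcal{L}_{S_{\{1/e\}},i-1}^{[j]})^{\otimes p} \otimes (\mathcal{L}_{S_{\{1/e\}},i}^{[j]})^{-1}$.

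The construction of $Hasse_i$ is the only point requiring care, and it is where I expect the main work to lie. Locally on $S_{\{1/e\}}$, choose a trivialisation exhibiting $\mathcal{E}_{i,\{1/e\}}$ as free over $\mathcal{O}_{S_{\{1/e\}}}[X]/X^e$ with $X$ acting by $[\pi]$; then $\omega_{G/S,i,\{1/e\}}^{[1]}$ is killed by $[\pi]$, hence contained in $X^{e-1}\mathcal{E}_{i,\{1/e\}}$, so each local section $y$ may be written $y = X^{e-1}z$ and one sets $Hasse_i(y) := Vz$ in $(\omega_{G/S,i-1,\{1/e\}}/\omega_{G/S,i-1,\{1/e\}}^{[e-1]})^{(p)}$. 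The essential verification is that this is independent of the local choices: $z$ is well defined up to $X\mathcal{E}_{i,\{1/e\}}$, and $X$ maps $(\omega_{G/S,i-1,\{1/e\}})^{(p)}$ into $(\omega_{G/S,i-1,\{1/e\}}^{[e-1]})^{(p)}$, so $Vz$ is unambiguous in the quotient; the local maps therefore glue to a global morphism $Hasse_i$ of locally free rank-$d$ sheaves, whose determinant lies in $(\mathcal{L}_{S_{\{1/e\}},i-1}^{[e]})^{\otimes p} \otimes (\mathcal{L}_{S_{\{1/e\}},i}^{[1]})^{-1}$. The delicate part is precisely securing the local freeness over $\mathcal{O}_{S_{\{1/e\}}}[X]/X^e$ used to perform the division by $X^{e-1}$, and checking that the gluing is compatible across the cover.

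Finally, the factorisation is obtained exactly as in the proof of Proposition \ref{compute_hasse}, now read as an identity of morphisms of sheaves: for a local section $y$ of $\omega_{G/S,i,\{1/e\}}^{[j]}$ one has $(M_i^{[2]}\circ\dots\circ M_i^{[j]})(y) = [\pi]^{j-1}y = X^{e-1}z$ with $y = X^{e-j}z$, so $Hasse_i$ of it equals $Vz$, and applying $[\pi]^{e-j}$, which on the relevant graded pieces factors as $(M_{i-1}^{[j+1]})^{(p)}\circ\dots\circ(M_{i-1}^{[e]})^{(p)}$, recovers $V(X^{e-j}z) = Vy = V_i^{[j]}(y)$. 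Taking determinants of this composition and using $\det((M_{i-1}^{[k]})^{(p)}) = (M_{i-1}^{[k]})^p$ yields the stated relation $Ha_i^{[j]} = (M_{i-1}^{[j+1]})^p\dots(M_{i-1}^{[e]})^p\cdot Hasse_i\cdot M_i^{[2]}\dots M_i^{[j]}$; both sides are sections of $(\mathcal{L}_{S_{\{1/e\}},i-1}^{[j]})^{\otimes p} \otimes (\mathcal{L}_{S_{\{1/e\}},i}^{[j]})^{-1}$, as the intermediate determinant bundles telescope.
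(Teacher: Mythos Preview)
Your proposal is correct and in fact gives considerably more detail than the paper does: the paper's entire proof is the single sentence ``This is exactly the construction done in \cite{R-X}.'' What you have written is a faithful relative version of the pointwise constructions of Propositions \ref{hasse} and \ref{compute_hasse}, and this is indeed the content of the cited reference. The one place where your sketch is slightly informal is the claim that $\mathcal{E}_{i,\{1/e\}}$ is Zariski-locally free over $\mathcal{O}_{S_{\{1/e\}}}[X]/X^e$: checking freeness fibrewise does not by itself yield local freeness, so one should invoke Nakayama over the local rings of $S_{\{1/e\}}$ after knowing freeness on residue fields (exactly as in the pointwise proof of Proposition \ref{hasse}), together with the fact that $\mathcal{E}_{\{1/e\}}$ is already known to be locally free as an $\mathcal{O}_{S_{\{1/e\}}}$-module. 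With that clarification your argument is complete.
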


\begin{proof}
This is exactly the construction done in \cite{R-X}.
\end{proof}

\section{Partial degrees}

\subsection{Definitions}

We are still considering a $p$-divisible $G$ endowed with an action of $O_F$ satisfying Hypothesis $\ref{hypothese}$. Let $N \geq 1$ be an integer, and let $H$ be a finite flat subgroup of $G[p^N]$ stable by $O_F$. Its height is thus a multiple of $f$, that we write $fh_0$. Let $\omega_H$ be the conormal sheaf of $H$ along its unit section; it is a finitely generated $O_K$-module of $p^N$-torsion. We have an exact sequence
$$0 \to \omega_{G/H} \to \omega_{G} \to \omega_H \to 0 \text{.} $$

The degree of $H$ (defined in \cite{Fa_deg}), written $\deg H$ can be defined as the valuation of the determinant of the map $\omega_{G/H} \to \omega_G$. Alternatively, we have $\deg H = v(Fitt_0$ $\omega_H)$, where $Fitt_0$ is the Fitting ideal, and the valuation of an ideal $x O_K$ is the valuation of $x$. \\
\indent The definition of the partial degrees according to the elements of $\mathcal{T}$ is very natural. It has already been done in \cite{Bi}. We have a decomposition $\omega_H = \oplus_{i=1}^f \omega_{H,i}$, where $O_{F^{ur}}$ acts on $\omega_{H,i}$ by $\tau_i$, and exact sequences
$$0 \to \omega_{G/H,i} \to \omega_{G,i} \to \omega_{H,i} \to 0$$
for all $1 \leq i \leq f$.

\begin{defi}
The \textit{unramified partial degree} $\deg_i H$ is defined as the valuation of the determinant of the map $\omega_{G/H,i} \to \omega_{G,i}$ for all $1 \leq i \leq f$. Alternatively, we have $\deg_i H = v(Fitt_0$ $\omega_{H,i})$.
\end{defi}

\begin{ex}
We have $\deg_i G[p^N] = Ned$ and $\deg_i G[\pi^N] = Nd$ for all $1 \leq i \leq f$. If $H$ is multiplicative, then $\deg_i H =h_0$ for all $i$; if $H$ is étale, then $\deg_i H = 0$ for all $i$.  
\end{ex}

The unramified partial degree of $H^D$ can be defined either by using the module $\omega_{H^D,i}$ or the map $\omega_{G^D,i} \to \omega_{(G/H)^D,i}$ for all $1 \leq i \leq f$. The unramified partial degrees are thus canonically defined, and depend only on the subgroup $H$ and not on the $p$-divisible group $G$.. This will not be the case for the general partial degrees. \\ 
\indent We will now refine the unramified partial degrees, to take into account the full action of $O_F$. These general partial degrees have already been defined by Sasaki for the Hilbert modular variety (\cite{Sa}), although very few properties were known. We recall that we have filtrations $(\omega_{G,i}^{[j]})_{1 \leq j \leq e}$ and $(\omega_{G/H,i}^{[j]})_{1 \leq j \leq e}$. The map $\omega_{G/H,i} \to \omega_{G,i}$ respect this filtration; we thus get maps
$$\omega_{G/H,i}^{[j]}  \to \omega_{G,i}^{[j]}$$
for all $1 \leq i \leq f$ and $1 \leq j \leq e$.

\begin{defi}
The \textit{partial degree} $\deg_i^{[j]} H$ is defined as the valuation of the determinant of the map 
$$\omega_{G/H,i}^{[j]} / \omega_{G/H,i}^{[j-1]} \to \omega_{G,i}^{[j]} / \omega_{G,i}^{[j-1]}$$
for all $1 \leq i \leq f$ and $1 \leq j \leq e$.
\end{defi}

\begin{ex}
We have $\deg_i^{[j]} G[\pi^N] = N d/e$ for all $1 \leq i \leq f$ and $1 \leq j \leq e$.
\end{ex}

Define $\omega_{H,i}^{[j]}$ as the image of $\omega_{G,i}^{[j]}$ in $\omega_H$ for all $1 \leq i \leq f$ and $1 \leq j \leq e$. Then we also have $\deg_i^{[j]} H = v(Fitt_0$ $(\omega_{H,i}^{[j]} / \omega_{H,i}^{[j-1]}))$ for $1 \leq i \leq f$ and $1 \leq j \leq e$. We define the element $\deg_i^{[j]} H^D$ as the valuation of the determinant of the map
$$\omega_{G^D,i}^{[j]} / \omega_{G^D,i}^{[j-1]} \to \omega_{(G/H)^D,i}^{[j]} / \omega_{(G/H)^D,i}^{[j-1]} \text{.} $$

\begin{rema}
One can define the partial degrees of a finite flat $O_F$-stable subgroup $H \subset G[p^N]$ even if the $p$-divisible group $G$ does not satisfy Hypothesis $\ref{hypothese}$.
\end{rema}

\subsection{Properties} \label{deg_prop}

The unramified partial degrees enjoy the following properties.

\begin{prop}
Let $H$ be an $O_F$-stable finite flat subgroup of $G[p^N]$ of height $fh_0$.
\begin{itemize}
\item We have $\deg H = \sum_{i=1}^h \deg_i H$.
\item The unramified partial degrees are additive : if $H_1 \subset H_2$ are two finite flat $O_F$-stable subgroups of $G[p^N]$, then
$$\deg_i H_2 = \deg_i H_1 + \deg_i H_2 / H_1$$
for all $1 \leq i \leq f$.
\item We have $\deg_i H^D = h_0 - \deg_i H$ for all $1 \leq i \leq f$.
\item The unramified partial degree $\deg_i H$ is in $[0,h_0]$ for all $1 \leq i \leq f$.
\end{itemize}
\end{prop}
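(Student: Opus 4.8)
The plan is to prove each of the four properties in turn, relying throughout on the basic fact that the degree of a finite flat group scheme equals the valuation of the determinant of the map $\omega_{G/H} \to \omega_G$, together with the $O_{F^{ur}}$-decomposition of all the modules in sight.

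\textbf{Additivity and summation.} First I would establish additivity, since the first bullet follows from it once we know $\deg_i G[p^N]/H$ has the expected value, or alternatively follows directly from the block-diagonal structure of the map. For additivity, given $H_1 \subset H_2$, the map $\omega_{G/H_1} \to \omega_G$ factors as $\omega_{G/H_1} \to \omega_{G/(H_2/H_1)} \to \omega_G$ — more precisely one uses the isomorphism $G/H_2 \simeq (G/H_1)/(H_2/H_1)$ to obtain a composition $\omega_{G/H_2,i} \to \omega_{G/H_1,i} \to \omega_{G,i}$. The determinant of a composition is the product of the determinants, so taking valuations gives additivity in each graded piece indexed by $i$. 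This is the cleanest route and I expect it to be essentially formal. The summation formula $\deg H = \sum_i \deg_i H$ is then immediate: since $\omega_{G/H} \to \omega_G$ respects the $O_{F^{ur}}$-decomposition, its determinant is the product of the determinants of the pieces $\omega_{G/H,i} \to \omega_{G,i}$, and valuations add.

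\textbf{Duality.} For the third bullet, the key input is the duality for the total degree, namely $\deg H + \deg H^D = h_0$ (this is Fargues' result in \cite{Fa_deg}), refined according to $\mathcal{T}$. I would run Fargues' argument decomposed piece-by-piece: using the perfect pairing identifying $\omega_{H^D,i}$ with a dual of the cotangent complex of $H$ in the $\tau_i$-component, or more concretely using the exact sequence relating $\omega_{G^D,i} \to \omega_{(G/H)^D,i}$ to $\omega_{G/H,i} \to \omega_{G,i}$ via Cartier duality. Since $H$ has height $fh_0$ and is $O_F$-stable, each component $H_i$ contributes height $h_0$, and the degree plus dual degree in each $\tau_i$-component must sum to $h_0$. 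The main thing to check is that the Cartier duality is compatible with the $O_{F^{ur}}$-grading in the expected way (the Frobenius may permute the $\tau_i$, so one must verify the indexing matches, i.e.\ that duality pairs the $i$-component with itself and not with a Frobenius-twist); I expect this is the one place requiring genuine care.

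\textbf{The range.} The last bullet, $\deg_i H \in [0,h_0]$, is then an immediate consequence: $\deg_i H \geq 0$ because it is the valuation of a nonzero element of $O_K$ (the map $\omega_{G/H,i} \to \omega_{G,i}$ is injective with finite cokernel, being an isomorphism after $\otimes K$), and $\deg_i H \leq h_0$ follows from applying the same nonnegativity to $H^D$ together with the duality relation $\deg_i H = h_0 - \deg_i H^D$ just proved. The main obstacle overall is the duality step, specifically confirming the compatibility of Cartier duality with the $\tau_i$-indexing; the additivity, summation, and range assertions are then routine consequences of the determinant-of-composition formula and the exact sequences already set up in the text.
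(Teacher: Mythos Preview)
Your outline for summation, additivity, and the range is essentially identical to the paper's: the paper also derives summation from the decomposition $\omega_H=\oplus_i\omega_{H,i}$, additivity from the factorization (phrased there as the exact sequence $0\to\omega_{H_2/H_1,i}\to\omega_{H_2,i}\to\omega_{H_1,i}\to 0$), and the range from duality plus nonnegativity.

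For the duality step your plan is vaguer than the paper's and differs in emphasis. The paper first reduces to the $p$-torsion case by additivity, then uses the Dieudonn\'e crystal: with $\mathcal{E}_G$, $\mathcal{E}_{G/H}$ as in Lemma~\ref{dieudonne} and $\mathcal{E}_H$ the crystal of $H$ over $O_K/p$, one obtains a short exact sequence $0\to\omega_{H,i}\to\mathcal{E}_{H,i}\to\nu_{H^D,i}\to 0$ with $\mathcal{E}_{H,i}$ free of rank $h_0$ over $O_K/p$, from which $\deg_i H+\deg_i H^D=h_0$ is immediate by taking Fitting ideals. Your proposal to ``run Fargues' argument piece-by-piece'' amounts to the same idea but leaves the crucial input (the existence of a free rank-$h_0$ middle term in each $\tau_i$-component) implicit; you would want to name that module and justify its freeness, which is exactly what the Hodge filtration of the crystal provides. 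Your worry about Frobenius permuting the $\tau_i$ is misplaced here: Cartier duality is $O_F$-linear, so the $\tau_i$-component of $G$ pairs with the $\tau_i$-component of $G^D$; the Frobenius shift only enters when the Verschiebung is involved, not in the duality relation itself.
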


\begin{proof}
The first relation comes from the decomposition $\omega_H = \oplus_{i=1}^f \omega_{H,i}$. The second relation is implied by the exact sequences
$$0 \to \omega_{H_2/H_1,i} \to \omega_{H_2,i} \to \omega_{H_1,i} \to 0$$
for $1 \leq i \leq f$. For the third equation, one reduces to the case where $H$ is $p$-torsion by additivity. Let $\mathcal{E}_{G}$ and $\mathcal{E}_{G/H}$ be the free $O_K$-modules constructed in Lemma \ref{dieudonne} for $G$ and $G/H$ respectively. Let $\mathcal{E}_{H}$ be the Dieudonné crystal associated to $H$ evaluated at $O_K/p$ (\cite{BBM} section $3.1$). Finally, let $\nu_{H^D}$ be the cokernel of the map $\omega_{(G/H)^D}^\vee \to \omega_{G^D}^\vee$. We have a commutative diagram

\begin{displaymath}
\xymatrix{
 & 0 \ar[d] & 0 \ar[d] & 0 \ar[d] & \\
0 \ar[r] & \omega_{G/H} \ar[r] \ar[d] & \omega_G \ar[r] \ar[d] & \omega_H \ar[r] \ar[d] & 0 \\
0 \ar[r] & \mathcal{E}_{G/H} \ar[r] \ar[d] & \mathcal{E}_G \ar[r] \ar[d] & \mathcal{E}_H \ar[r] \ar[d] & 0 \\
0 \ar[r] & \omega_{(G/H)^D}^\vee \ar[r] \ar[d] & \omega_{G^D}^\vee \ar[r] \ar[d] & \nu_{H^D} \ar[r] \ar[d] & 0 \\
 & 0 & 0 & 0 &
} 
\end{displaymath}
Moreover, one checks that the horizontal and vertical lines are exact sequences. The modules $\mathcal{E}_H$ and $\nu_{H^D}$ have an action of $O_F$, and thus decompose in $\mathcal{E}_H = \oplus_{i=1}^f \mathcal{E}_{H,i}$, $\nu_{H^D} = \oplus_{i=1}^f \nu_{H^D,i}$. We have an exact sequence
$$0 \to \omega_{H,i} \to \mathcal{E}_{H,i} \to \nu_{H^D,i} \to 0$$
for all $i$ between $1$ and $f$. We deduce the third equality, since $\deg_i H^D =  v(Fitt_0$ $\nu_{H^D,i})$ and $\mathcal{E}_{H,i}$ is a free $O_{K}/p$-module of rank $h_0$. From this relation one can easily deduce the last assertion.
\end{proof}

The properties verified by the general partial degrees are similar, but the proofs of these properties are more difficult.

\begin{prop}
Let $H \subset G[p^N]$ be a finite flat $O_F$-stable subgroup, and let $i$ be an integer between $1$ and $f$. 
\begin{itemize}
\item We have $\sum_{j=1}^e \deg_i^{[j]} H = \deg_i H$. \\
\item The partial degrees are additive : if $H_1 \subset H_2$ are two finite flat $O_F$-stable subgroups of $G[p^N]$, then
$$\deg_i^{[j]} H_2 = \deg_i^{[j]} H_1 + \deg_i^{[j]} H_2 / H_1$$
for all $1 \leq j \leq e$.
\item We have $\deg_i^{[j]} H^D = h_0/e - \deg_i^{[j]} H$ for all $1 \leq j \leq e$.
\item The partial degree $\deg_i^{[j]} H$ is in $[0,h_0/e]$ for all $1 \leq j \leq e$.
\end{itemize}
\end{prop}

\begin{proof}
For the first assertion, one has just to observe that the determinant of the map $\omega_{G/H,i} \to \omega_{G,i}$ is the product of the determinant on the graded pieces $\omega_{G/H,i}^{[j]} / \omega_{G/H,i}^{[j-1]} \to \omega_{G,i}^{[j]} / \omega_{G,i}^{[j-1]}$. \\
The second relation is obtained by remarking that the map $\omega_{G/H_2,i} \to \omega_{G,i}$ factorizes as 
$$\omega_{G/H_2,i} \to \omega_{G/H_1,i} \to \omega_{G,i} \text{,} $$
and that this factorization respects the filtrations on the three modules. \\
Let us now prove the third relation. Note that it implies the last assertion. By additivity, one reduces to the case where $H$ is a subgroup of $G[\pi]$. Let $\mathcal{E}$ be the free $O_K \otimes_{\mathbb{Z}_p} O_F$-module constructed in the section $\ref{dual}$. We keep the notations from that section. Let us fix an integer $i$ between $1$ and $f$. The module $\mathcal{E}_i$ is a free $O_K [X] / \prod_{k=1}^e (X - \sigma_{i,k}(\pi))$-module, with $X$ acting by $1 \otimes \pi$. To simplify the notations, let us write $\pi_k := \sigma_{i,k}(\pi)$ for all $1 \leq k \leq e$. Recall that we have exact sequences coming from the Hodge filtration
$$0 \to \omega_{G,i} \to \mathcal{E}_i \to \omega_{G^D,i}^\vee \to 0 \text{.} $$
Moreover we have the following filtration on $\mathcal{E}_i$ : 
$$0 \subset \omega_{G,i}^{[1]} \subset \dots \subset \omega_{G,i}^{[e]} = \mathcal{F}_i^{[e]} \subset \dots \subset \mathcal{F}_i^{[0]} = \mathcal{E}_i \text{,} $$
with $\mathcal{F}_i^{[j]} / \omega_{G,i} = (\omega_{G^D,i} / \omega_{G^D,i}^{[j]})^\vee$ for all $1 \leq j \leq e$. Let $\mathcal{E}_{G/H,i}$ be the module constructed for $G/H$; it is a free $O_K [X] / \prod_{k=1}^e (X - \pi_k)$ contained in $\mathcal{E}_i$ and containing $X \mathcal{E}_i$. Moreover, $\mathcal{E}_i / X \mathcal{E}_i \simeq (O_K /p )^{h}$, and the quotient $\mathcal{E}_i / \mathcal{E}_{G/H,i}$ is free over $O_K/p$ of rank $h_0$. The module $\mathcal{E}_{G/H,i} / X \mathcal{E}_i$ is thus a direct factor of $\mathcal{E}_i / X \mathcal{E}_i$.\\
Let $\varepsilon_1, \dots, \varepsilon_h$ be a basis of $\mathcal{E}_i$ over $O_K [X] / \prod_{k=1}^e (X - \pi_k)$, such that $\omega_{G,i}^{[1]}$ is generated by 
$$\prod_{k=2}^e (X-~\pi_k) \varepsilon_1, \dots, \prod_{k=2}^e (X - \pi_k) \varepsilon_d \text{.} $$
Thus, $\mathcal{F}_i^{[1]}$ is generated by $(X-\pi_1) \mathcal{E}_i$ and $\varepsilon_1, \dots, \varepsilon_d$. Let $\overline{\omega_{G,i}^{[1]}}$ and $\overline{\mathcal{F}_i^{[1]}}$ be respectively the images of $\omega_{G,i}^{[1]}$ and $\mathcal{F}_i^{[1]}$ in $\mathcal{E}_i / X \mathcal{E}_i$.  We then have
\begin{itemize}
\item the module $\overline{\omega_{G,i}^{[1]}}$ is generated by $(\prod_{k=2}^e \pi_k) \varepsilon_1, \dots, (\prod_{k=2}^e \pi_k) \varepsilon_d$
\item the module $\overline{\mathcal{F}_i^{[1]}}$ is generated by $\varepsilon_1, \dots, \varepsilon_d, \pi_1 \varepsilon_{d+1}, \dots, \pi_1 \varepsilon_h$.
\end{itemize}
The quotient $\overline{\mathcal{F}_i^{[1]}} / \overline{\omega_{G,i}^{[1]}}$ is thus a free $O_K / \prod_{k=2}^e \pi_k$-module of rank $h$. \\
Let $\mathcal{E}_{H,i}$ be the quotient $\mathcal{E}_i / \mathcal{E}_{G/H,i}$; it is a free $O_K/p$-module of rank $h_0$. The image of $\omega_{G,i}^{[1]}$ in $\mathcal{E}_{H,i}$ is equal to $\omega_{H,i}^{[1]}$, and if $\mathcal{F}_{H,i}^{[1]}$ is the image of $\mathcal{F}_{i}^{[1]}$, then $\mathcal{E}_{H,i} / \mathcal{F}_{H,i}^{[1]}$ is isomorphic to $(\omega_{H^D,i}^{[1]})^\vee$. To sum up, one has a filtration
$$0 \subset \omega_{H,i}^{[1]} \subset \mathcal{F}_{H,i}^{[1]} \subset \mathcal{E}_{H,i} \text{.} $$
From the calculations made above, one sees that $\mathcal{F}_{H,i}^{[1]} / \omega_{H,i}^{[1]}$ is a free $O_K / \prod_{k=2}^e \pi_k$ module of rank $h_0$. This implies the relation
$$\deg_i^{[1]} H + h_0(1 - \frac{1}{e}) + \deg_i^{[1]} H^D = h_0 \text{.} $$
This gives the relation for the first partial degree. Considering the module $\mathcal{F}_i^{[1]} / \omega_{G,i}^{[1]}$, which is free over $O_K[X] / \prod_{k=2}^e (X - \pi_k)$, one gets all the other relations by induction.
\end{proof}

The elements $(\deg_i^{[j]} H)_{1 \leq j \leq e}$ depend on a choice of an ordering for the set $\Sigma_i$. However, we have the following property.

\begin{prop} \label{indep_deg}
Let $H$ be a finite flat $O_F$-stable subgroup of $G[p^N]$, and let $i$ be an integer between $1$ and $f$. Suppose that $\min ( \deg_i H , \deg_i H^D) + r_i(G) < 1/e$. Then the elements $(\deg_i^{[j]} H)_{1 \leq j \leq e}$ do not depend on any choice. 
\end{prop}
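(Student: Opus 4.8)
The plan is to reduce the statement to the case of a subgroup of small partial degree, where the partial degrees are governed by a single filtration that is pinned down by Proposition \ref{independance}. The starting point is the duality relation $\deg_i^{[j]} H^D = h_0/e - \deg_i^{[j]} H$ proved above, which shows that the tuple $(\deg_i^{[j]} H)_{1 \le j \le e}$ is independent of the ordering of $\Sigma_i$ if and only if $(\deg_i^{[j]} H^D)_{1 \le j \le e}$ is. Together with the duality of the structural invariants, $m_i^{[j]}(G^D) = m_i^{[j]}(G)$, hence $r_i(G^D) = r_i(G)$, this lets me treat the two halves of the hypothesis symmetrically: if $\deg_i H + r_i(G) < 1/e$ I argue with $H \subset G$, and if $\deg_i H^D + r_i(G) < 1/e$ I run the same argument on the dual side and pass back through the relation above.

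For the first case I use the Fitting description $\deg_i^{[j]} H = v(\mathrm{Fitt}_0(\omega_{H,i}^{[j]}/\omega_{H,i}^{[j-1]}))$, where $\omega_{H,i}^{[j]}$ is the image of $\omega_{G,i}^{[j]}$ in $\omega_{H,i} = \omega_{G,i}/\omega_{G/H,i}$. The key elementary point is that $\omega_{H,i}$, a torsion $O_K$-module with $v(\mathrm{Fitt}_0\,\omega_{H,i}) = \deg_i H$, is annihilated by $\mathfrak{m}_{\deg_i H}$: by the elementary divisors theorem over the valuation ring $O_K$ its cyclic summands have valuations bounded by their sum $\deg_i H$. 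Hence $\mathfrak{m}_{\deg_i H}\,\omega_{G,i} \subseteq \omega_{G/H,i}$, and the image filtration $\omega_{H,i}^{[j]} = (\omega_{G,i}^{[j]} + \omega_{G/H,i})/\omega_{G/H,i}$ depends only on the reduction of $\omega_{G,i}^{[j]}$ modulo $\mathfrak{m}_{\deg_i H}$.

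Now two orderings of $\Sigma_i$ give two filtrations of $\omega_{G,i}$ whose reductions modulo $\mathfrak{m}_{1/e}$ are adequate filtrations of $\omega_{G,i,\{1/e\}}$ with the common invariants $m_i^{[j]}(G)$ (Corollary \ref{indep_m}), hence the same $r_i(G) < 1/e$. Proposition \ref{independance} then yields $\omega_{G,i,\{1/e - r_i(G)\}}^{[j]} = {\omega_{G,i,\{1/e - r_i(G)\}}^{[j]}}'$ for every $j$. Since $\deg_i H < 1/e - r_i(G)$ we have $\mathfrak{m}_{1/e - r_i(G)} \subseteq \mathfrak{m}_{\deg_i H}$, so the two filtrations already agree modulo $\mathfrak{m}_{\deg_i H}$; by the previous paragraph the image filtrations $\omega_{H,i}^{[j]}$ coincide, and with them the Fitting ideals of the graded pieces and the partial degrees $\deg_i^{[j]} H$. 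The second case is identical in spirit: I compute $\deg_i^{[j]} H^D$ through the torsion module $\nu_{H^D,i}$ of the duality diagram, a quotient of $\omega_{G^D,i}^\vee$ killed by $\mathfrak{m}_{\deg_i H^D}$ and filtered by the (dual of the) filtration $\omega_{G^D,i}^{[j]}$; as $\deg_i H^D < 1/e - r_i(G) = 1/e - r_i(G^D)$, Proposition \ref{independance} applied to $G^D$ makes this filtration ordering-independent modulo $\mathfrak{m}_{\deg_i H^D}$.

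The routine ingredients are the annihilator bound from the structure of torsion $O_K$-modules and the inclusion of ideals $\mathfrak{m}_{1/e - r_i(G)} \subseteq \mathfrak{m}_{\deg_i H}$. I expect the only genuinely delicate step to be the dual case: one must check that $\deg_i^{[j]} H^D$ really is computed as the Fitting valuation of the graded pieces of $\nu_{H^D,i}$ for the filtration induced by $\omega_{G^D,i}^{[j]}$, that is, produce the graded refinement of the duality diagram built from Lemma \ref{dieudonne} relating the conormal map $\omega_{G^D,i} \to \omega_{(G/H)^D,i}$ to $\nu_{H^D,i}$. Once this identification is in place the argument on the $G^D$ side is literally the argument of the first case, with $r_i(G)$ replaced by the equal quantity $r_i(G^D)$.
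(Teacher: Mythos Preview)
Your argument is correct and follows the paper's proof essentially verbatim: reduce to the small-degree case via the duality formula $\deg_i^{[j]} H^D = h_0/e - \deg_i^{[j]} H$ and $r_i(G^D)=r_i(G)$, then observe that $\omega_{H,i}$ is killed by $\mathfrak{m}_{\deg_i H}$ so the image filtration $\omega_{H,i}^{[j]}$ only sees $\omega_{G,i}^{[j]}$ modulo $\mathfrak{m}_{1/e-r_i(G)}$, where Proposition~\ref{independance} pins it down. One small point: your appeal to Corollary~\ref{indep_m} is unnecessary and its hypothesis is not guaranteed here---just apply Proposition~\ref{independance} directly with the $r_i(G)$ of the reference ordering, exactly as the paper does.
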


\begin{proof}
Suppose that $\deg_i H + r_i(G) < 1/e$. The elements $(\deg_i^{[j]} H)_{1 \leq j \leq e}$ can be computed using the modules $\omega_{H,i}^{[1]}, \dots, \omega_{H,i}^{[e]}$. We recall that for all $1 \leq j \leq e$, the module $\omega_{H,i}^{[j]}$ is the image of $\omega_{G,i}^{[j]}$ in $\omega_H$. Since $\deg_i H < 1/e - r_i(G)$, this module is also the image of $\omega_{G,i,\{1/e - r_i(G)\}}^{[j]}$ in $\omega_H$. But Proposition $\ref{independance}$ tells that the modules $(\omega_{G,i,\{1/e - r_i(G)\}}^{[j]})_{1 \leq j \leq e}$ do not depend on any choice. We conclude that so do the modules $\omega_{H,i}^{[j]}$, and the elements $\deg_i^{[j]} H$, for $1 \leq j \leq e$. \\
Suppose now that $\deg_i H^D + r_i(G) < 1/e$. Since $r_i(G) = r_i(G^D)$, the previous argument shows that the elements $\deg_i^{[j]} H^D$ are well defined for all $j$ between $1$ and $e$. The formula $\deg_i^{[j]} H =~h_0/e -~\deg_i^{[j]} H^D$ implies that the elements $\deg_i^{[j]} H$ are well defined too for all $1 \leq j \leq e$.
\end{proof}

\subsection{Partial degrees in family}

Let $\mathfrak{X}$ be an admissible formal $O_K$-scheme (\cite{Bo} section $2.4$). In this section only, $G$ will denote a $p$-divisible group over $\mathfrak{X}$ of height $efh$, with an action of $O_F$. Suppose also that there is a finite flat subgroup $H \subset G[p^N]$ over $\mathfrak{X}$ for some integer $N$. We will denote the $p$-divisible group $G/H$ by $G'$, and $\phi : G \to G'$ the isogeny. Let $\omega_{G/\mathfrak{X}}$ and $\omega_{G'/\mathfrak{X}}$ denote the conormal sheaves of $G$ and $G'$ along their unit sections. They are locally free sheaves over $\mathfrak{X}$, and thus decompose according to the elements of $\mathcal{T}$ 
$$ \omega_{G/\mathfrak{X}} = \bigoplus_{i=1}^f \omega_{G/\mathfrak{X},i} \text{,} $$
and similarly for $\omega_{G'/\mathfrak{X}}$. We thus have a map $\phi^* : \omega_{G'/\mathfrak{X},i} \to \omega_{G/\mathfrak{X},i}$.
We will also make the following hypothesis.

\begin{hypo}
For each $p$-divisible $G_0$ equal to $G$ or $G'$, and for each integer $1 \leq i \leq f$, there exists a filtration 
$$0 = \omega_{G_0/\mathfrak{X},i}^{[0]} \subset \omega_{G_0/\mathfrak{X},i}^{[1]} \subset \dots \subset \omega_{G_0/\mathfrak{X},i}^{[e]} = \omega_{G_0/\mathfrak{X},i} \text{,} $$
such that $\omega_{G_0/\mathfrak{X},i}^{[j]} / \omega_{G/\mathfrak{X},i}^{[j-1]}$ is a locally free sheaf of rank $d$, and $O_F$ acts by $\sigma_{i,j}$ on it for all $1 \leq j \leq e$. Moreover, the map $\phi^*$ respects these filtrations.
\end{hypo}

We will define
$$\mathcal{L}_{G,i}^{[j]} := \det (\omega_{G/\mathfrak{X},i}^{[j]} / \omega_{G/\mathfrak{X},i}^{[j-1]}) \text{,} $$
and similarly for $\mathcal{L}_{G',i}^{[j]}$ for all $1 \leq i \leq f$ and $1 \leq j \leq e$. They are invertible sheaves over $\mathfrak{X}$. The map $\phi^*$ give sections
$$\delta_{H,i}^{[j]} \in H^0 \left (\mathfrak{X},\mathcal{L}_{G,i}^{[j]} \cdot {\mathcal{L}_{G',i}^{[j]}}^{-1} \right)$$
for all $1 \leq i \leq f$ and $1 \leq j \leq e$. Let $X^{rig}$ be the generic fiber of $\mathfrak{X}$ in the sense of Raynaud (\cite{Bo} section $2.7$). We will still denote by $\mathcal{L}_{G,i}^{[j]}$ the invertible sheaves on $X^{rig}$, and by $\delta_{H,i}^{[j]}$ the sections induced on $X^{rig}$. Moreover, we have a norm map (\cite{Ka} section $2$)
\begin{equation*}
\begin{aligned}
X^{rig} & \to \mathbb{R} \\
x & \to  | \delta_{H,i}^{[j]}(x) |
\end{aligned} \text{.} 
\end{equation*}

\begin{defi}
Let $1 \leq i \leq f$ and $1 \leq j \leq e$. The partial degree $\deg_i^{[j]} H$ is a function $X^{rig} \to \mathbb{R}$ defined by
$$ | \delta_{H,i}^{[j]} (x) | = p^{- \deg_i^{[j]} H(x)} \text{,} $$
for all $x \in X^{rig}$.
\end{defi}

\section{The canonical subgroup}

\subsection{An alternative approach}

We recall the main theorem of Fargues (\cite{Fa}) regarding the construction of the canonical subgroup.

\begin{theo}[Fargues] \label{main_theo}
Suppose $p \neq 2$, and let $G_0$ be a $p$-divisible group of height $h_0$ and dimension $d_0$ over $O_K$. We suppose that $ha(G_0) < 1/2$, and that $ha(G_0) < 1/3$ if $p=3$. Then there exists a canonical subgroup $C_{0} \subset G_0[p]$, such that : 
\begin{itemize}
\item $C_{0}$ has height $d_0$.
\item $\deg C_0^D = ha(G_0)$.
\item $C_0$ is the kernel of the Frobenius in $G \times_{O_K} O_{K,\{1 - ha(G_0)\}}$.
\item if $ha(G_0) < 1/(p+1)$ then we have $ha(G_0/C_0) = p \cdot ha(G_0)$.
\end{itemize}
Moreover the construction of the canonical subgroup is compatible with duality.
\end{theo}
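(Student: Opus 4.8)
The plan is to separate the existence of $C_0$ from the numerical relations, since the latter can be read off from the degree formalism already in place while the former is the genuinely deep input. For existence I would follow the purely local, Dieudonn\'e-theoretic route. Working with the crystal $\mathcal{E}_{\{n\}}$ and the Verschiebung $V : \mathcal{E}_{\{n\}} \to \omega_{\{n\}}^{(p)}$, the starting observation is that $ha(G_0)$ is the truncated valuation of $\det V$ on $\omega_{\{1\}}$. The first step is to show that when $ha(G_0) < 1/2$ the Verschiebung acquires a well-defined kernel after reduction modulo $p^{1-ha(G_0)}$: the elementary divisors of $V$ split into a block of valuations bounded by $ha(G_0)$ and a block of valuations $\geq 1 - ha(G_0)$, and the strict gap $1 - 2\,ha(G_0) > 0$ keeps these two blocks from mixing. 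The "large" block cuts out a rank-$d_0$ direct summand of $\omega_{\{1-ha(G_0)\}}$, which is exactly the Hodge filtration of the kernel of Frobenius modulo $p^{1-ha(G_0)}$; its height is $d_0$ since the kernel of Frobenius on the special fiber has order $p^{\dim G_0}$.

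The second, and I expect hardest, step is to promote this mod-$p^{1-ha(G_0)}$ datum to an honest finite flat subgroup $C_0 \subset G_0[p]$ over $O_K$. Here I would argue that a finite flat group scheme killed by $p$ is controlled by its Dieudonn\'e module together with the Hodge filtration, and that the summand constructed above lifts by successive approximation, the convergence being guaranteed precisely by the gap $1 - 2\,ha(G_0)$. This is also the point where the Tate--Oort classification of group schemes of order $p$ enters: the order-$p$ graded pieces are parametrized by a ring whose ramification is governed by $p$, and it is in controlling these parameters that the hypotheses $p \neq 2$ and $ha(G_0) < 1/3$ for $p=3$ are forced. I expect this lifting-and-uniqueness estimate, rather than anything in the numerical part, to be the main obstacle.

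Granting existence, the relation $\deg C_0^D = ha(G_0)$ follows from the degree formalism. The quotient map $\omega_{G_0/C_0} \to \omega_{G_0}$ has determinant of valuation $\deg C_0$, and by the duality formula $\deg C_0^D$ equals (height of $C_0$) $- \deg C_0 = d_0 - \deg C_0$; since $C_0$ is the lift of the kernel of Frobenius, $\deg C_0 = d_0 - ha(G_0)$ is exactly the contribution of the "large" block of $V$, whence $\deg C_0^D = ha(G_0)$. Applied with $\deg C_0^D = ha(G_0) < 1/2$, the first theorem of the introduction now furnishes uniqueness of $C_0$ and simultaneously identifies it with the kernel of Frobenius modulo $p^{1-ha(G_0)}$, matching the third assertion.

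Finally, for the behaviour under the isogeny $G_0 \to G_0/C_0$, I would compute the Verschiebung of $G_0/C_0$ from that of $G_0$. Because $C_0$ is the kernel of Frobenius modulo $p^{1-ha(G_0)}$, the dual isogeny realizes $G_0/C_0$ with a Verschiebung that is, up to a controlled error, the $p$-power twist of the "large" block of $V$; taking the determinant multiplies the relevant valuation by $p$, giving $ha(G_0/C_0) = p \cdot ha(G_0)$. This computation is honest exactly when $p \cdot ha(G_0) < 1 - ha(G_0)$, i.e. $(p+1)\,ha(G_0) < 1$, which is the stated bound $ha(G_0) < 1/(p+1)$ ensuring the new invariant still lies below the working modulus. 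Compatibility with duality is then formal: Cartier duality exchanges Frobenius and Verschiebung and is compatible with $\deg$ through $\deg H^D = (\text{height of }H) - \deg H$, so the canonical subgroup of $G_0^D$ is the annihilator of $C_0$ and the whole construction is self-dual.
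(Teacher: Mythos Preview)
The paper does not prove this statement. Theorem~\ref{main_theo} is explicitly introduced with the words ``We recall the main theorem of Fargues (\cite{Fa})'' and is stated without proof, serving only as input for the paper's own results (in particular Theorem~\ref{alternative} and the later computations). So there is no proof in the paper to compare your proposal against; the intended ``proof'' here is simply the citation to \cite{Fa}.

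As for your sketch on its own merits: the outline is in the right spirit---Fargues' argument does proceed through the Hodge--Newton style gap in the elementary divisors of $V$ and then lifts a mod-$p^{1-ha(G_0)}$ kernel of Frobenius to a finite flat subgroup over $O_K$---but your account of the lifting step is where the real content lies and you have essentially asserted it. In \cite{Fa} this step is carried out via the Harder--Narasimhan filtration of finite flat group schemes developed in \cite{Fa_deg}, not by a bare successive-approximation argument on Dieudonn\'e data; the Tate--Oort classification is not what drives the restrictions on $p$ there. Your derivation of $\deg C_0^D = ha(G_0)$ and of $ha(G_0/C_0) = p\cdot ha(G_0)$ from the degree formalism, once $C_0$ is known to exist with the stated height and to coincide with $\ker F$ modulo $p^{1-ha(G_0)}$, is essentially the content of the paper's Theorem~\ref{alternative} and is correct.
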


This theorem says that if a $p$-divisible group is close to being ordinary, then one can construct a subgroup of large degree in its $p$-torsion, and that this construction is canonical. Another possible approach is to assume the existence of a subgroup of large degree, and then to prove that it is canonical and verifies some other properties. More precisely, one can prove the following theorem by simple arguments.

\begin{theo} \label{alternative}
Let $G_0$ be a $p$-divisible group of height $h_0$ and dimension $d_0$ over $O_K$. Suppose that there exists a finite flat subgroup $C_0 \subset G_0[p]$ of height $d_0$ such that $\deg C_0^D < 1/2$. Then $C_0$ is the unique finite flat subgroup of $G_0[p]$ satisfying these properties. Moreover, we have the relation $\deg C_0^D = ha(G_0)$, and $C_0$ is the kernel of the Frobenius in $G \times_{O_K} O_{K,\{1 - ha(G_0)\}}$.
\end{theo}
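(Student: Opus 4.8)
The plan is to split the statement into three parts of increasing difficulty — the elementary degree bookkeeping that places both $C_0$ and the quotient in the stable range, the uniqueness, and the identity $\deg C_0^D = ha(G_0)$ together with the Frobenius description — and to treat them in that order. First I would record the consequences of the hypothesis. Since $C_0 \subset G_0[p]$ is killed by $p$ and has height $d_0$, the degree function of \cite{Fa_deg} gives $\deg C_0 + \deg C_0^D = d_0$, while additivity along $0 \to C_0 \to G_0[p] \to G_0[p]/C_0 \to 0$ together with $\deg G_0[p] = \dim G_0 = d_0$ yields $\deg(G_0[p]/C_0) = d_0 - \deg C_0 = \deg C_0^D < 1/2$. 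Thus both $C_0^D$ and the quotient $Q := G_0[p]/C_0$ have degree $< 1/2$; this is the only place the strict bound $1/2$ enters.

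For uniqueness, suppose $C_0'$ is another finite flat subgroup of $G_0[p]$ of height $d_0$ with $\deg {C_0'}^D < 1/2$, and set $Q' := G_0[p]/C_0'$. Let $I$ be the scheme-theoretic closure in $G_0[p]$ of the generic intersection $(C_0)_K \cap (C_0')_K$; it is finite flat of some height $t \le d_0$. The natural map $G_0[p]/I \to Q \times Q'$ is injective on generic fibres, hence factors as a generic isomorphism onto its scheme-theoretic image, which is a subgroup of $Q \times Q'$. Monotonicity of $\deg$ under generic isomorphisms and under passage to subgroups, combined with additivity for products, gives $\deg(G_0[p]/I) \le \deg Q + \deg Q' < 1$. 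On the other hand additivity gives $\deg(G_0[p]/I) = d_0 - \deg I \ge d_0 - t$, whence $d_0 - t < 1$ and $t = d_0$ since $t$ is an integer. Therefore $(C_0)_K = (C_0')_K$, and as a finite flat subgroup is the scheme-theoretic closure of its generic fibre, $C_0 = C_0'$. This argument needs only the formal properties of the degree, not Tate--Oort.

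The heart of the matter is the identity $\deg C_0^D = ha(G_0)$ and the identification of $C_0$ with the kernel of Frobenius modulo $p^{1-ha(G_0)}$. Here I would reduce modulo $p$ and work with the Dieudonn\'e crystal of Lemma \ref{dieudonne}, writing $ha(G_0)$ as the truncated valuation of $\det(V)$ for the Verschiebung $V : \omega_{\{1\}} \to \omega_{\{1\}}^{(p)}$, and expressing $\deg C_0^D$ as the length of $\mathrm{coker}(\omega_{(G_0/C_0)^D}^\vee \to \omega_{G_0^D}^\vee)$ attached to the isogeny $\phi : G_0 \to G_0/C_0$. To match the two I would filter $C_0$ by finite flat subquotients of order $p$ and apply the explicit Tate--Oort classification (\cite{T-O}): each layer is of the form $\mathrm{Spec}\,O_K[x]/(x^p - ax)$ with $\deg = v(a)$ and $\deg^D = v(p/a)$, and on such a layer the reduction of the Verschiebung is governed by the parameter whose valuation is $\deg^D$. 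Summing over the layers should identify $\deg C_0^D$ with $v(\det V)$, i.e. with $ha(G_0)$, the truncation at $1$ being harmless as $\deg C_0^D < 1/2$; simultaneously, each layer with $\deg^D < 1$ reduces modulo $p^{1-\deg^D}$ to its Frobenius kernel, and assembling these shows that $C_0 \times_{O_K} O_{K,\{1-ha(G_0)\}}$ is the kernel of the relative Frobenius of $G_0 \times_{O_K} O_{K,\{1-ha(G_0)\}}$.

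The main obstacle is precisely this last step: controlling the reduction of an order-$p$ layer and its induced Verschiebung uniformly in $p$. Because the theorem makes no assumption on $p$, in contrast to Theorem \ref{main_theo} which excludes $p=2$ and restricts $p=3$, I cannot appeal to Fargues' deformation-theoretic construction and must instead extract the degree--Hasse relation directly from the Tate--Oort parameters. The delicate points are to see that the contributions of the successive layers \emph{add up} to $\det V$ rather than merely bounding it, and that the flatness of the Frobenius kernel persists after reduction to $O_{K,\{1-ha(G_0)\}}$; this is where the genuine work lies.
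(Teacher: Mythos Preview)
Your outline matches the paper's proof through the uniqueness step (your variant via $G_0[p]/I \hookrightarrow Q\times Q'$ is correct; the paper uses the equivalent inequality $\deg C_0 + \deg C_0' \le \deg(C_0+C_0') + \deg(C_0\cap C_0')$) and the Tate--Oort filtration. For the identity $\deg C_0^D = ha(G_0)$, the step you flag as delicate is handled in the paper as follows: with $w := \deg C_0^D$, one first uses $\deg(G_0[p]/C_0) = w$ to get $\omega_{G_0,\{1-w\}} \simeq \omega_{C_0,\{1-w\}}$, and then proves that the images $\omega_{C_0/H_k,\{1-w\}}$ are \emph{free} $O_{K,\{1-w\}}$-modules of rank $d_0-k$ (the surjection $O_{K,\{1\}}^k \twoheadrightarrow \omega_{H_k}$ has kernel annihilated by $\mathfrak{m}_{\deg H_k^D} \subset \mathfrak{m}_w$, hence becomes an isomorphism over $O_{K,\{1-w\}}$). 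This freeness is exactly what lets $\det V$ be computed as the product over the graded pieces, so that the Tate--Oort contributions genuinely add up to $ha(G_0)$ rather than merely bound it.

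There is, however, a genuine gap in your route to the Frobenius-kernel statement. Knowing that each subquotient $H_k/H_{k-1}$ reduces to $\alpha_p$ modulo $p^{1-w}$ does \emph{not} let you assemble the conclusion that $C_0$ itself is killed by Frobenius there: being annihilated by the relative Frobenius is not closed under extensions, even within $p$-torsion. The standard obstruction is $E[p]$ for a supersingular elliptic curve over $\overline{\mathbb F}_p$, a nonsplit extension of $\alpha_p$ by $\alpha_p$ on which Frobenius is nonzero. The paper avoids this entirely: since $\omega_{C_0^D}$ is annihilated by $\mathfrak m_w$ and the $\mathfrak m_w$-torsion of the free $O_{K,\{1\}}$-module $\omega_{G_0^D,\{1\}}$ is $\mathfrak m_{1-w}\,\omega_{G_0^D,\{1\}}$, the map $\omega_{C_0^D}\to\omega_{G_0^D,\{1-w\}}$ vanishes (using $w<1-w$), and one then invokes \cite[Proposition~1]{Fa} directly. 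You should replace the layer-by-layer assembly with this differential criterion.
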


\begin{proof}
Let $H$ be a finite flat subgroup of $G[p]$ of height $d_0$ and let $h$ be the height of $H \cap C_0$. Suppose that $h \leq d_0 - 1$, and that $\deg H > d_0 - 1/2$. Then we have by the properties of the degree function (\cite{Fa_deg})
$$\deg H + \deg C_0 \leq \deg (H+C_0) + \deg (H \cap C_0) \leq \deg G[p] + h \leq 2d_0 - 1 \text{,} $$
and we get a contradiction since both $H$ and $C_0$ have a degree strictly larger than $d_0 - 1/2$. \\
\indent Let $w = \deg C_0^D = \deg G_0[p] / C_0$. We have an exact sequence
$$ 0 \to \omega_{G_0[p]/C_0} \to \omega_{G_0,\{1\}}  \to \omega_{C_0} \to 0 \text{.} $$
We thus have an isomorphism $\omega_{G_0,\{1-w\}} \simeq \omega_{C_0,\{1-\omega\}}$. The Verschiebung map $V : \omega_{G_0,\{1-w\}} \to~\omega_{G_0,\{1-w\}}^{(p)}$ has a determinant of valuation $ha(G)$ by definition. On the other side, one can filter the subgroup $C_0$ by finite flat subgroups 
$$0=H_0 \subset H_1 \subset \dots \subset H_{d_0}=~C_0 \text{,} $$
such that $H_{i}/ H_{i-1}$ has height $1$ for all $1 \leq i \leq d_0$. We thus get a filtration
$$0 \subset \omega_{C_0/H_{d_0-1}} \subset \dots \subset \omega_{C_0 / H_1} \subset \omega_{C_0} \text{.} $$
Let $\omega_{C_0/H_i,\{1-w\}}$ denote the image of $\omega_{C_0/H_i}$ in $\omega_{C_0,\{1-w\}}$. We have exact sequences
$$0 \to \omega_{C_0/H_i,\{1-w\}} \to \omega_{C_0,\{1-w\}} \to \omega_{H_i} \otimes_{O_K} O_{K,\{1-w\}} \to 0$$
for all $1 \leq i \leq d_0-1$. We claim that $\omega_{H_i} \otimes_{O_K} O_{K,\{1-w\}}$ is a free module over $O_{K,\{1-w\}}$ of rank $i$. Indeed, $\omega_{H_i}$ is generated by $i$ elements, so there is a surjective map
$$(O_{K,\{1\}})^i \to \omega_{H_i} \text{.} $$
The kernel of this map is killed by any element of valuation greater than $i - \deg H_i = \deg H_i^D \leq w$. The previous map is thus an isomorphism after tensoring with $O_{K,\{1-w\}}$, and this proves the claim. \\
Since $\omega_{C_0,\{1-w\}}$ and $\omega_{H_i} \otimes_{O_K} O_{K,\{1-w\}}$ are free modules over $O_{K,\{1-w\}}$ of rank respectively $d_0$ and $i$, we deduce that $\omega_{C_0/H_i,\{1-w\}}$ is a free $O_{K,\{1-w\}}$-module of rank $d_0-i$. We have thus filtered the module $\omega_{C_0,\{1-w\}}$ by free $O_{K,\{1-w\}}$ subspaces, and the graded pieces are isomorphic to $\omega_{H_i/H_{i-1}} \otimes_{O_K} O_{K,\{1-w\}}$. The groups $H_i/ H_{i-1}$ satisfy the conditions of those studied by Oort-Tate (\cite{T-O}). In particular, the Verschiebung map for this group is the multiplication by an element whose valuation is the degree of the dual of the group. Putting everything together, one gets
$$ha(G) = \sum_{i=1}^{d_0} \deg (H_i/H_{i-1})^D = \deg C_0^D = w \text{,} $$
this equality being in $[0,1-w]$. Since $w < 1 - w$, we have the relation $ha(G) = \deg C_0^D$. \\
\indent For the last relation, one observes that the morphism 
$$\omega_{C_0^D} \otimes_{O_K} O_{K,\{1-w\}}  \to \omega_{G^D,\{1-w\}}$$
is $0$ since the degree of $C_0^D$ is strictly less than $w$. We can then apply Proposition $1$ in \cite{Fa}.
\end{proof}

Note that there is no assumption on $p$ in the previous theorem. The canonical subgroup constructed by Fargues is thus uniquely determined by the fact that it has height the dimension of the $p$-divisible group, and that its dual has degree strictly less than $1/2$ (or equivalently that its degree is strictly larger than its height minus $1/2$). This leads to the following definition.

\begin{defi} \label{def_can}
Let $G_0$ be a $p$-divisible group of height $h_0$ and dimension $d_0$ over $O_K$. Let $C_0$ be a finite flat subgroup of $G_0[p]$. We say that $C_0$ is the canonical subgroup of $G_0$ if the height of $C_0$ is $d_0$ and if $\deg C_0^D < 1/2$.
\end{defi}

Theorem $\ref{main_theo}$ then gives a criterion for the existence of a canonical subgroup. Note that Theorem $\ref{alternative}$ says that the existence of a canonical subgroup implies the relation $ha(G) < 1/2$.

\subsection{The partial degrees of the canonical subgroup}

We have seen that one can relate the degree of the canonical subgroup and the Hasse invariant. When one considers a $p$-divisible group $G$ with an action of $O_F$, much more can be said. We keep the notations from the previous sections. We have the following result.

\begin{theo} \label{theo_1}
Let $G$ be a $p$-divisible group over $O_K$ with an action of $O_F$ satisfying Hypothesis $\ref{hypothese}$. Suppose that there exists a canonical subgroup $C \subset G[p]$ (in the sense of Definition $\ref{def_can}$).
\begin{enumerate}
\item  Suppose that $ha(G) < \min(1/e,1/2)$; then we have
$$\deg_i C^D = ha_i(G) \text{ and } \deg_i C[\pi]^D = ha_i^{[e]}(G)$$
for all $1 \leq i \leq f$. This implies $\deg C[\pi]^D = ha^{[e]}(G)$. 
\item Under the same hypotheses, we have
$$\deg_i^{[1]} C[\pi]^D = hasse_i(G) \text{  and  } \deg_i^{[j]} C[\pi]^D = m_i^{[j]}(G)$$
for all $1 \leq i \leq f$ and $2 \leq j \leq e$. 
\item If $e=1$ we suppose that $ha(G) < 1/(p+1)$; if $e \geq 2$, we suppose that $ha(G) <1/(pe) $ as well as the existence of a canonical subgroup for $G/C$. Then $ha_i(G/C) = p \cdot ha_{i-1}(G)$ for $1 \leq i \leq f$. We also have
$$ha_i^{[1]}( G/ C[\pi]) = p \cdot ha_{i-1}^{[e]}(G)  \text{ and } ha_i^{[j]} (G/C[\pi]) = ha_i^{[j-1]} (G)$$
for all $1 \leq i \leq f$ and $2 \leq j \leq e$. The Hasse invariant of $G/C[\pi]$ is then equal to $ha(G)+~(p-~1) ha^{[e]}(G)$. Moreover, if $e \geq 2$
$$hasse_i(G/C[\pi]) = p \cdot m_{i-1}^{[e]} (G) \text{  ,  }  m_i^{[2]} (G/C[\pi]) = hasse_i(G)  \text{ and  } m_i^{[j]} (G/C[\pi]) = m_i^{[j-1]} (G)$$
for all $1 \leq i \leq f$ and $3 \leq j \leq e$. 
\end{enumerate}
\end{theo}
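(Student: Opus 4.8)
The plan is to leverage the alternative characterization of the canonical subgroup (Theorem \ref{alternative}) together with the explicit relations between the partial Hasse invariants and the primitive Hasse invariants established in Proposition \ref{compute_hasse}, working throughout in the unramified and ramified filtrations $(\omega_{i,\{1/e\}}^{[j]})$. The central idea, just as in the proof of Theorem \ref{alternative}, is that when the degree of the dual is small the conormal map of the canonical subgroup becomes an isomorphism after a suitable truncation, so that the Verschiebung computation factors cleanly through the graded pieces. I would first establish assertion (1): since $ha(G) < 1/2$ the canonical subgroup $C$ exists and $C[\pi]^D = (G/C)[\pi]$-type considerations give exact sequences compatible with the $\mathcal{T}$-decomposition; taking determinants componentwise recovers $\deg_i C^D = ha_i(G)$ exactly as in the total case but applied to each $\omega_{i,\{1\}}$. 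The statement $\deg_i C[\pi]^D = ha_i^{[e]}(G)$ then follows because $C[\pi]$ is the kernel of Frobenius modulo $p^{1-ha}$ and the top graded piece $\omega_{i,\{1/e\}}^{[e]}/\omega_{i,\{1/e\}}^{[e-1]}$ is precisely where the Verschiebung contribution $V_i^{[e]}$ lives.

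For assertion (2) I would compute the partial degrees $\deg_i^{[j]} C[\pi]^D$ by identifying the conormal filtration of $C[\pi]$ with the filtration on $\omega_{i,\{1/e\}}$ after truncating away the (small) defect controlled by $r_i(G)$. The hypothesis $ha(G) < 1/e$ and Proposition \ref{final_indep} guarantee that all the invariants $m_i^{[j]}(G)$ and $hasse_i(G)$ are computable on any adequate filtration, so one may work with the canonical filtration coming from the action of $O_F$. The key point is that the map $\phi^* : \omega_{G/C[\pi],i}^{[j]}/\omega_{G/C[\pi],i}^{[j-1]} \to \omega_{G,i}^{[j]}/\omega_{G,i}^{[j-1]}$ whose determinant is $\deg_i^{[j]} C[\pi]$, when dualized, realizes precisely the maps $M_i^{[j]}$ (for $2 \le j \le e$) and $Hasse_i$ (for $j=1$) from the definition of the primitive invariants. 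This is exactly the content one expects: the dual of the canonical subgroup is the kernel of Frobenius, so its conormal maps reproduce the Verschiebung-type data that defines $hasse_i$ and the $[\pi]$-maps that define $m_i^{[j]}$. One then reads off $\deg_i^{[1]} C[\pi]^D = hasse_i(G)$ and $\deg_i^{[j]} C[\pi]^D = m_i^{[j]}(G)$ directly from Proposition \ref{compute_hasse}.

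For assertion (3), which computes the invariants of the quotient $G/C[\pi]$, the strategy is to relate the filtration on $\omega_{G/C,i}$ (respectively $\omega_{G/C[\pi],i}$) to that on $\omega_{G,i}$ via the isogeny $\phi$, and to track how Frobenius shifts the indices. The factor of $p$ appearing in $ha_i(G/C) = p \cdot ha_{i-1}(G)$ and in $hasse_i(G/C[\pi]) = p \cdot m_{i-1}^{[e]}(G)$ comes from the Frobenius twist $(p)$ in the relevant Verschiebung maps, precisely the same mechanism as in the last bullet of Theorem \ref{main_theo}; the index shift $i \mapsto i-1$ reflects that Frobenius moves between the $O_{F^{ur}}$-isotypic pieces via $\sigma$. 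The hypothesis $ha(G) < 1/(pe)$ ensures that after passing to the quotient the invariants of $G/C[\pi]$ remain below the threshold $1/e$, so that Proposition \ref{final_indep} still applies and the computation is independent of the orderings. I would derive these relations by combining the degree formulas of assertions (1)--(2) with the additivity of partial degrees and the duality $\deg_i^{[j]} H^D = h_0/e - \deg_i^{[j]} H$, then translate back into Hasse invariants through Proposition \ref{compute_hasse}.

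The main obstacle I anticipate is assertion (3), specifically controlling the filtration on $\omega_{G/C[\pi]}$ and verifying that the index-shifting and the factor of $p$ interact correctly across all graded pieces simultaneously. Whereas assertions (1)--(2) are essentially a componentwise refinement of the already-proven Theorem \ref{alternative} combined with the structural Proposition \ref{compute_hasse}, assertion (3) requires genuinely understanding how the Verschiebung and $[\pi]$-maps of the quotient $p$-divisible group are built from those of $G$, and one must be careful that the truncation level $1/(pe)$ is exactly what is needed to keep every invariant in the range where the independence results of Proposition \ref{final_indep} and Proposition \ref{indep_deg} hold. The most delicate bookkeeping will be checking that $m_i^{[2]}(G/C[\pi]) = hasse_i(G)$, since this is where a primitive invariant of one type ($hasse$) becomes a primitive invariant of the other type ($m$) under the quotient, reflecting the shift of the Verschiebung data into a pure $[\pi]$-structure after the isogeny.
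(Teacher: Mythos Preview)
Your plan for assertion (1) is broadly right but omits the essential tool: Raynaud's classification of $p$-torsion group schemes of height $f$ with $O_{F^{ur}}$-action. One filters $C$ by $O_F$-stable subgroups $0=H_0\subset\dots\subset H_{de}=C$ refining the $\pi$-adic filtration $C[\pi^k]$, so that each $H_k/H_{k-1}$ is a Raynaud group scheme; Raynaud's description makes the Verschiebung on $\omega_{H_k/H_{k-1},i}$ the multiplication by an element of valuation $\deg_i(H_k/H_{k-1})^D$, and summing gives $ha_i(G)=\deg_i C^D$. For the refined equality $\deg_i C[\pi]^D = ha_i^{[e]}(G)$ the paper observes that the images of the $\omega_{C/C[\pi^k],i}$ in $\omega_{C,i,\{1/e\}}\simeq\omega_{G,i,\{1/e\}}$ form an \emph{adequate} filtration, so that by Proposition~\ref{final_indep} the $ha_i^{[j]}(G)$ may be computed from it; the Raynaud formula on each graded piece then yields $ha_i^{[e+1-k]}(G)=\deg_i(C[\pi^k]/C[\pi^{k-1}])^D$. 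Your appeal to ``$C[\pi]$ is the kernel of Frobenius'' does not produce this.

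You have also misjudged where the difficulty lies. Assertion (2) is the most delicate computation, not a formality: the claim that the dualized conormal map ``realizes precisely the maps $M_i^{[j]}$ and $Hasse_i$'' is exactly what must be proved. The paper works with the cokernel $\nu_{C[\pi]^D,i}$ of $\omega_{(G/C[\pi])^D,i}^\vee\to\omega_{G^D,i}^\vee$, writes $[\pi]$ on $\omega_{G^D,i,\{1/e\}}^\vee$ in block-upper-triangular form with off-diagonal blocks $M_i^{[j]}$, and by an inductive valuation-chase shows that the intersection of the image of $\omega_{(G/C[\pi])^D,i,\{1/e\}}^\vee$ with the bottom step of the filtration is generated by the image of $M_i^{[e]}$; this forces $\deg_i^{[e]} C[\pi]^D=m_i^{[e]}(G)$ and one continues by induction. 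The hypothesis $ha_i^{[e]}(G)+\sum_{j=2}^e m_i^{[j]}(G)<1/e$, implied by $ha(G)<1/e$, is what makes this chase terminate.

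For assertion (3) your proposed route through additivity and duality of partial degrees does not work: those relations compare subgroups of a fixed $G$, not the Hasse invariants of a quotient $p$-divisible group to those of $G$. The paper's argument is different. First, since $C$ is the kernel of Frobenius modulo $p^{1-w}$ one has $(G/C)\times O_{K,\{1-w\}}\simeq (G\times O_{K,\{1-w\}})^{(p)}$, whence $ha_i^{[j]}(G/C)=p\cdot ha_{i-1}^{[j]}(G)$ directly (this is the source of both the factor $p$ and the shift $i\mapsto i-1$). Second, one bootstraps: if $C_2/C$ is the canonical subgroup of $G/C$, then $C_2[\pi^{e+k}]/C[\pi^k]$ is the canonical subgroup of $G/C[\pi^k]$, and applying assertion (1) to each $G/C[\pi^k]$ expresses $ha_i^{[j]}(G/C[\pi^k])$ as $\deg_i(C_2[\pi^{k+e+1-j}]/C_2[\pi^{k+e-j}])^D$, which is either an $ha_i^{[\cdot]}(G)$ or a $p\cdot ha_{i-1}^{[\cdot]}(G)$ according to whether the index lands in $[1,e]$ or $(e,2e]$. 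The primitive-invariant statements then follow via Proposition~\ref{compute_hasse}. The bound $ha(G)<1/(pe)$ is needed precisely so that $ha(G/C)<1/e$ and part (1) applies to $G/C$.
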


From the result of Fargues (\cite{Fa}), the existence of the canonical subgroup for $G$ is guaranteed by the conditions $p >3$ and $ha(G) < 1/2$, or $p=3$ and $ha(G) < 1/3$. The existence of the canonical subgroup for $G/C$ is guaranteed by the conditions $p>3$ and $ha(G) < 1/(2p)$, or $p=3$ and $ha(G) < 1/(3p)$. \\ 
\indent One can then not only relate the degree of $C$ to the Hasse invariant, but also the degree and partial degrees of $C[\pi]$ to the partial Hasse invariants. One can also compute the partial Hasse invariants of the $p$-divisible group $G/C[\pi]$. Actually, one can get information on $C[\pi^k]$. Indeed, we have the following propositions.

\begin{prop} \label{prop_1}
Let $G$ be a $p$-divisible group over $O_K$ with an action of $O_F$ satisfying Hypothesis $\ref{hypothese}$, and suppose that there exists a canonical subgroup $C \subset G[p]$. Suppose that $e \geq 2$ and let $1 \leq k \leq e$ be an integer. Suppose that $ha(G) < 1/e$; then we have
$$\deg_i (C[\pi^k] / C[\pi^{k-1}])^D =  ha_i^{[e+1-k]}(G)$$
for all $1 \leq i \leq f$. Thus $\deg (C[\pi^k]/C[\pi^{k-1}])^D =  ha^{[e+1-k]}(G)$. Suppose that $ha(G) < 1/(pe)$, and that there exists a canonical subgroup for $G/C$. Then for all $1 \leq i \leq f$ we have
\begin{itemize}
\item $\deg_i^{[j]} (C[\pi^k]/C[\pi^{k-1}])^D = p \cdot m_{i-1}^{[e+1-k+j]}(G)$ for $1 \leq j \leq k-1$.
\item $\deg_i^{[k]} (C[\pi^k]/C[\pi^{k-1}])^D = hasse_i(G)$.
\item $\deg_i^{[j]} (C[\pi^k]/C[\pi^{k-1}])^D = m_i^{[j-k+1]}(G)$ for $k+1 \leq j \leq e$.
\end{itemize}
\end{prop}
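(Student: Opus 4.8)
The plan is to reduce everything to \textbf{Theorem \ref{theo_1}}, applied not to $G$ but to the successive quotients along the $\pi$-adic filtration of the canonical subgroup. Write $G_m := G/C[\pi^m]$ for $0 \le m \le e-1$, so that $G_0 = G$ and each $G_m$ again carries an action of $O_F$ and satisfies Hypothesis \ref{hypothese}; set $Q_k := C[\pi^k]/C[\pi^{k-1}]$. The starting point is the observation that $Q_k$, viewed as a subgroup of $G_{k-1}$, is exactly the $\pi$-torsion of the canonical subgroup of $G_{k-1}$. Granting this, Proposition \ref{prop_1} becomes the case $k=1$ of Theorem \ref{theo_1} (parts 1 and 2) for the group $G_{k-1}$, rewritten in terms of the invariants of $G$ itself.

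The first step is the structural identity. Using that a canonical subgroup exists for $G/C$, I would lift $C$ to a level-two canonical subgroup $C_2 \subset G[p^2]$ with $C_2[\pi^e] = C$, and check that $C_{k-1} := C_2[\pi^{e+k-1}]/C[\pi^{k-1}]$ is a finite flat $O_F$-stable subgroup of $G_{k-1}[p]$ of height $d_0 = \dim G$ whose dual has degree $< 1/2$; by Definition \ref{def_can} it is the canonical subgroup of $G_{k-1}$. Since $C_2[\pi^k] = C[\pi^k]$ for $k \le e$, a direct computation on $\pi$-torsion gives $C_{k-1}[\pi] = Q_k$ as subgroups of $G_{k-1}$. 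Note that the ambient group $G_{k-1}$ is the same whether one reaches it through $G$ or through $G/C[\pi]$, so the partial degrees $\deg_i^{[j]}$ in the statement are unambiguous.

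Next I would feed this into Theorem \ref{theo_1}. Applying parts 1 and 2 to $G_{k-1}$ and its canonical subgroup $C_{k-1}$ yields
$$\deg_i Q_k^D = ha_i^{[e]}(G_{k-1}), \quad \deg_i^{[1]} Q_k^D = hasse_i(G_{k-1}), \quad \deg_i^{[j]} Q_k^D = m_i^{[j]}(G_{k-1}) \ (2\le j\le e).$$
It then remains to rewrite the invariants of $G_{k-1}$ in terms of those of $G$ by iterating part 3 of Theorem \ref{theo_1}, which exhibits the primitive and ramified partial Hasse invariants of $G/C[\pi]$ as a rotation of those of $G$: the shifts $ha_i^{[j]}(G/C[\pi]) = ha_i^{[j-1]}(G)$ and $m_i^{[j]}(G/C[\pi]) = m_i^{[j-1]}(G)$ together with $m_i^{[2]}(G/C[\pi]) = hasse_i(G)$, and the single amplified wrap-around $hasse_i(G/C[\pi]) = p\cdot m_{i-1}^{[e]}(G)$ (resp.\ $ha_i^{[1]}(G/C[\pi]) = p\cdot ha_{i-1}^{[e]}(G)$). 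Tracing an invariant of $G_{k-1}$ back through $k-1$ quotients, each position shifts down and crosses the wrap-around at most once: the no-wrap contributions reproduce the case $j=k$ (value $hasse_i(G)$) and the case $k+1\le j\le e$ (value $m_i^{[j-k+1]}(G)$), while the single wrap-around produces precisely the factor $p$ and the index change $i\to i-1$ of the case $1\le j\le k-1$, namely $p\cdot m_{i-1}^{[e+1-k+j]}(G)$. The unramified formula $\deg_i Q_k^D = ha_i^{[e+1-k]}(G)$, asserted under the weaker hypothesis $ha(G)<1/e$, follows the same pattern using only part 1 and the shift $ha_i^{[e]}(G_{k-1}) = ha_i^{[e+1-k]}(G)$; when a level-two canonical subgroup is unavailable I would argue directly, identifying $C$ with the kernel of Frobenius via Theorem \ref{alternative} and matching the $\pi$-adic filtration of $C$ against the graded Verschiebung maps $V_i^{[j]}$ of Proposition \ref{compute_hasse}.

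The main obstacle is the propagation of the smallness hypotheses along the tower $G_0,\dots,G_{e-1}$. The Hasse invariant is not preserved: one has $ha(G_{m+1}) = ha(G_m) + (p-1)\,ha^{[e]}(G_m)$, so it may grow, controlled only by the crude bound $ha(G_m) \le p\cdot ha(G)$. Under $ha(G)<1/(pe)$ this still gives $ha(G_m) < 1/e$, so parts 1 and 2 apply at every level; but part 3, and in particular the amplified wrap-around, nominally requires $ha < 1/(pe)$, which fails for $m\ge 1$. The delicate point is therefore to check that the wrap-around is invoked only on an invariant that is genuinely small: the amplified term traces back to a single $m_{i-1}^{[\cdot]}(G) \le ha(G)/e < 1/(pe^2)$, so $p$ times it stays below $1/e^2 < 1/e$ and the corresponding equality in $[0,1/e]$ remains valid. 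Making this bookkeeping precise, and verifying that a canonical subgroup persists at each stage $G_m$, is where essentially all the work lies; once it is in place, the case analysis over the three ranges of $j$ is purely formal.
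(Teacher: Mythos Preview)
Your overall strategy---apply Theorem \ref{theo_1} to $G_{k-1}=G/C[\pi^{k-1}]$ and then translate back to invariants of $G$---is exactly the paper's. The paper even remarks, just after Proposition \ref{prop_2}, that the two propositions could have been presented as corollaries of Theorem \ref{theo_1}; it proves all three together precisely to avoid the bound-propagation issue you flag as the ``main obstacle.''

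For the first assertion under the weak hypothesis $ha(G)<1/e$, your iteration route is not available (no level-two canonical subgroup is assumed), and your fallback ``direct argument'' is precisely what the paper does: the filtration $0\subset\omega_{C/C[\pi^{e-1}],i,\{1/e\}}\subset\cdots\subset\omega_{C,i,\{1/e\}}$ is an \emph{adequate} filtration of $\omega_{G,i,\{1/e\}}\simeq\omega_{C,i,\{1/e\}}$, the Verschiebung on its graded pieces has determinant of valuation $\deg_i(C[\pi^k]/C[\pi^{k-1}])^D$ by the Raynaud description, and Propositions \ref{indep_h} and \ref{compute_hasse} identify that valuation with $ha_i^{[e+1-k]}(G)$. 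So here your fallback and the paper coincide.

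For the second assertion the paper does \emph{not} iterate part 3 of Theorem \ref{theo_1}. Instead it first obtains Proposition \ref{prop_2} in one shot: applying the first part of Proposition \ref{prop_1} (just established) to the canonical subgroup $C_2[\pi^{e+k}]/C[\pi^k]$ of $G/C[\pi^k]$ gives $ha_i^{[j]}(G/C[\pi^k])=\deg_i\bigl(C_2[\pi^{e+k+1-j}]/C_2[\pi^{e+k-j}]\bigr)^D$, and the right-hand side is already known from $G$ and $G/C$ separately. With Proposition \ref{prop_2} in hand, the paper then applies the $k=1$ case (part 2 of Theorem \ref{theo_1}) to $G/C[\pi^{k-1}]$ and checks the relevant inequality $ha_i^{[e]}(G/C[\pi^{k-1}])+\sum_{j=2}^e m_i^{[j]}(G/C[\pi^{k-1}])<1/e$ directly from those formulas. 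This sidesteps your concern entirely: no step ever requires $ha(G_m)<1/(pe)$ for $m\geq 1$, so the delicate ``only one wrap-around, and it stays small'' bookkeeping you describe is unnecessary. Your route would work too, but it is strictly harder; the paper's ordering (first part of Proposition \ref{prop_1} $\Rightarrow$ Proposition \ref{prop_2} $\Rightarrow$ second part of Proposition \ref{prop_1}) is the clean way to close the loop.
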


\begin{prop} \label{prop_2}
Let $G$ be a $p$-divisible group over $O_K$ with an action of $O_F$ satisfying Hypothesis $\ref{hypothese}$, and suppose that there exists a canonical subgroup $C \subset G[p]$. Suppose $e \geq 2$, $ha(G) < 1/(pe)$ and that there exists a canonical subgroup for $G/C$. Then we have
$$(ha_i^{[j]}( G/ C[\pi^k]))_{1 \leq j \leq e}  =( p \cdot ha_{i-1}^{[e-k+1]}(G), \dots, p \cdot ha_{i-1}^{[e]}(G), ha_i^{[1]} (G), \dots, ha_i^{[e-k]}(G))$$
for $1 \leq i \leq f$ and $1 \leq k \leq e$. These relations are equivalent to
\begin{equation*}
\begin{aligned}
& (hasse_i(G/C[\pi^k]), m_i^{[2]}(G/C[\pi^k]), \dots, m_i^{[e]}(G/C[\pi^k])) =\\
& (p \cdot m_{i-1}^{[e-k+1]}(G), \dots, p \cdot m_{i-1}^{[e]}(G), hasse_i(G), m_i^{[2]}(G), \dots, m_i^{[e-k]}(G))
\end{aligned}
\end{equation*}
for all $1 \leq i \leq f$ and $1 \leq k \leq e-1$. For $k=e$ we get $hasse_i(G/C) = p \cdot hasse_{i-1}(G)$ and $m_i^{[j]}(G/C) = p \cdot m_{i-1}^{[j]}(G)$ for all $1 \leq i \leq f$ and $2 \leq j \leq e$.
\end{prop}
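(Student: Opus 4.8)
The plan is to deduce Proposition \ref{prop_2} from the degree computations of Proposition \ref{prop_1} by observing that the successive quotients $G_k := G/C[\pi^k]$ again carry canonical subgroups whose $\pi$-torsion is an explicit graded piece of $C$. Since a canonical subgroup of $G/C$ is assumed to exist, the level-$2$ canonical subgroup $C_2 \subset G[p^2]$ exists: it is the preimage in $G$ of the canonical subgroup of $G/C$, so that $C_2[\pi^e] = C$ and $C_2/C$ is the canonical subgroup of $G/C$. I claim that for $0 \leq k \leq e-1$ the subgroup $C_k := C_2[\pi^{k+e}]/C[\pi^k]$ is the canonical subgroup of $G_k$, with $\pi$-torsion
$$C_k[\pi] = C[\pi^{k+1}]/C[\pi^k] \text{.}$$
By Theorem \ref{alternative} it suffices to check that $C_k$ has height $\dim G_k = efd$ and that $\deg C_k^D < 1/2$ (the latter coming from the degree estimates below); the identification of $C_k[\pi]$ is then forced, since it is the unique subgroup of $C_k$ of height $fd$.

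Granting the bound $ha(G_k) < 1/e$ discussed below, I would apply parts (1) and (2) of Theorem \ref{theo_1} to $G_k$ itself: these require only $ha(G_k) < \min(1/e,1/2) = 1/e$ (as $e \geq 2$) together with the existence of $C_k$, and they give $hasse_i(G_k) = \deg_i^{[1]}(C_k[\pi])^D$ and $m_i^{[j]}(G_k) = \deg_i^{[j]}(C_k[\pi])^D$ for $2 \leq j \leq e$. Substituting $C_k[\pi] = C[\pi^{k+1}]/C[\pi^k]$ and invoking Proposition \ref{prop_1} with index $k+1$ to evaluate these partial degrees, one reads off exactly the three regimes of the asserted primitive-invariant vector: the entry $j=1$ gives $hasse_i(G_k) = p\cdot m_{i-1}^{[e-k+1]}(G)$, the entries $2 \leq j \leq k$ give $p\cdot m_{i-1}^{[e-k+j]}(G)$, the entry $j=k+1$ gives $hasse_i(G)$, and the entries $k+2 \leq j \leq e$ give $m_i^{[j-k]}(G)$. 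The equivalent statement for the partial Hasse invariants $ha_i^{[j]}(G/C[\pi^k])$ then follows purely formally from Proposition \ref{compute_hasse}, which expresses each $ha_i^{[j]}$ as a fixed linear combination of the primitive invariants. The case $k=e$ (the quotient $G/C$) lies outside the range of Proposition \ref{prop_1}; here I would instead apply parts (1)--(2) of Theorem \ref{theo_1} to $G/C$, whose canonical subgroup is $C_2/C$ with $\pi$-torsion $C_2[\pi^{e+1}]/C$, and compute $\deg_i^{[j]}(C_2[\pi^{e+1}]/C)^D$ by the same Dieudonn\'e and duality method as in Proposition \ref{prop_1}. Because every graded piece now wraps around from index $e$ back to index $1$, each such computation produces a factor $p$ together with a Frobenius shift $i \mapsto i-1$, yielding $hasse_i(G/C) = p\cdot hasse_{i-1}(G)$ and $m_i^{[j]}(G/C) = p\cdot m_{i-1}^{[j]}(G)$.

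The main obstacle is the a priori estimate $ha(G_k) < 1/e$, which is needed both to apply Theorem \ref{theo_1}(1)--(2) to $G_k$ and to guarantee, via Proposition \ref{final_indep}, that the primitive invariants of $G_k$ are independent of the ordering of $\Sigma_i$. I would obtain it from the degree side rather than by iterating the Hasse-invariant transformation: part (3) of Theorem \ref{theo_1} requires the hypothesis $ha < 1/(pe)$, and this is \emph{not} preserved under quotient (only the weaker bound $<1/e$ is), so it cannot simply be fed back into itself. Instead, I would first establish the graded degree formulas of Proposition \ref{prop_1}, and their level-$2$ analogues for the pieces of $C_2$, by the Dieudonn\'e-module argument of Section \ref{dual}, independently of any Hasse bound on $G_k$. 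By Theorem \ref{alternative} one has $ha(G_k) = \deg C_k^D$, and additivity of the partial degrees over the filtration $C_k[\pi] \subset \dots \subset C_k[\pi^e] = C_k$ expresses this as the sum of the degrees of the $e$ consecutive graded pieces $C_2[\pi^m]/C_2[\pi^{m-1}]$ with $k+1 \leq m \leq k+e$. Combining these formulas with Fargues' total relation $ha(G/C) = p\cdot ha(G)$ (applicable since $ha(G) < 1/(pe) \leq 1/(p+1)$, see Theorem \ref{main_theo}), the sum collapses to $\sum_{l=1}^{e-k} ha^{[l]}(G) + p\sum_{l=e-k+1}^{e} ha^{[l]}(G) \leq p\cdot ha(G) < 1/e$, which closes the argument. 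In particular this shows that Proposition \ref{prop_2} is a formal consequence of the degree computations of Proposition \ref{prop_1} together with parts (1)--(2) of Theorem \ref{theo_1}.
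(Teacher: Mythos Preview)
Your overall strategy---identify the canonical subgroup $C_k=C_2[\pi^{k+e}]/C[\pi^k]$ of $G_k=G/C[\pi^k]$ and read off the Hasse invariants of $G_k$ from the degrees of graded pieces of $C_2$---is exactly the paper's. The bound $ha(G_k)=\deg C_k^D\le p\cdot ha(G)<1/e$ that you extract from additivity and the level-two pieces is also how the paper implicitly controls $G_k$.

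The substantive difference is the \emph{direction} in which you run the argument, and it creates a circularity with the paper's proof structure. You compute the \emph{primitive} invariants $hasse_i(G_k),m_i^{[j]}(G_k)$ via Theorem~\ref{theo_1}(2) applied to $G_k$, and then feed in the \emph{full} partial degrees $\deg_i^{[j]}(C[\pi^{k+1}]/C[\pi^k])^D$ from the second half of Proposition~\ref{prop_1}. But in the paper's joint proof of Theorem~\ref{theo_1}, Proposition~\ref{prop_1} and Proposition~\ref{prop_2}, that second half of Proposition~\ref{prop_1} is established \emph{last}, precisely by applying Theorem~\ref{theo_1}(2) to $G/C[\pi^k]$ and plugging in the primitive invariants of $G/C[\pi^k]$ already computed by Proposition~\ref{prop_2}. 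So your route uses Proposition~\ref{prop_1}(2) to prove Proposition~\ref{prop_2}, while the paper uses Proposition~\ref{prop_2} to prove Proposition~\ref{prop_1}(2). Your promise to ``establish the graded degree formulas of Proposition~\ref{prop_1} \ldots\ independently of any Hasse bound on $G_k$'' is exactly the missing step, and you do not indicate how to carry it out.

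The paper sidesteps this entirely by staying on the side of the \emph{partial} Hasse invariants $ha_i^{[j]}$ and using only the \emph{unramified} degrees $\deg_i$ from the first half of Proposition~\ref{prop_1}, which are available early. The seed is the Frobenius-twist isomorphism $(G/C)\times_{O_K}O_{K,\{1-w\}}\simeq (G\times_{O_K}O_{K,\{1-w\}})^{(p)}$, giving $ha_i^{[j]}(G/C)=p\cdot ha_{i-1}^{[j]}(G)$ directly; this replaces your vaguer appeal to ``the same Dieudonn\'e and duality method'' for the case $k=e$. Then the first half of Proposition~\ref{prop_1}, applied to $G_k$, yields $ha_i^{[j]}(G_k)=\deg_i(C_2[\pi^{k+e+1-j}]/C_2[\pi^{k+e-j}])^D$, and the right-hand side is already known from the same formula applied to $G$ and to $G/C$. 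The primitive-invariant identities then follow formally from Proposition~\ref{compute_hasse}, rather than being the starting point.
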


We could have presented these two results as corollaries of the theorem, but to get sharper bounds on the Hasse invariant, we will prove these three results together. This will be done in the next section. We sum up all the information in the following two tables. \\

\newpage

\begin{table}[!h] 
\begin{displaymath}
\begin{array}{|c | c | c | c | c | c |}
\hline
              & C[\pi]^D & (C[\pi^2]/C[\pi])^D      & \dots & (C/C[\pi^{e-1}])^D & C^D \\ 
\hline
\deg_i^{[1]}  &  hasse_i(G) & p \cdot m_{i-1}^{[e]}(G)  &  \dots & p \cdot m_{i-1}^{[2]}(G) & ha_i^{[1]}(G) \\
\hline
\deg_i^{[2]}   &  m_i^{[2]}(G) & hasse_i(G)          & \dots    &  p \cdot m_{i-1}^{[3]} (G) &  ha_i^{[2]}(G) \\
\hline
\vdots & \vdots & \vdots & \ddots & \vdots & \vdots  \\
\hline
\deg_i^{[e]}  &   m_i^{[e]}(G) & m_i^{[e-1]}(G) & \dots & hasse_i(G) & ha_i^{[e]}(G) \\
\hline
\deg_i  &   ha_i^{[e]}(G) & ha_i^{[e-1]}(G) & \dots & ha_i^{[1]}(G) & ha_i(G) \\
\hline
\deg  &   ha^{[e]}(G) & ha^{[e-1]}(G) & \dots & ha^{[1]}(G) & ha(G) \\
\hline
\end{array}
\end{displaymath}
\caption{The partial degrees of the graded parts of the canonical subgroup} \label{tab}
\end{table}

\begin{table}[!h]
\begin{displaymath} 
\begin{array}{| c | c | c | c | c |}
\hline
         & G/C[\pi] & G/C[\pi^2] & \dots & G/C \\
\hline
hasse_i  &  p \cdot m_{i-1}^{[e]}(G) & p \cdot m_{i-1}^{[e-1]}(G) & \dots & p \cdot hasse_{i-1}(G) \\
\hline
m_i^{[2]} & hasse_i(G) & p \cdot m_{i-1}^{[e]}(G)  & \dots & p \cdot m_{i-1}^{[2]}(G) \\
\hline
m_i^{[3]} & m_i^{[2]}(G) & hasse_i(G) & \dots & p \cdot m_{i-1}^{[3]}(G) \\
\hline
\vdots  & \vdots & \vdots & \ddots & \vdots  \\
\hline
m_i^{[e]}  & m_i^{[e-1]}(G) & m_i^{[e-2]}(G) & \dots & p \cdot m_{i-1}^{[e]}(G) \\
\hline
\end{array}
\end{displaymath}
\caption{The primitive Hasse invariants of the $p$-divisible groups $G/C[\pi^k]$} \label{tabl}
\end{table}

These two tables are valid if $G$ is a $p$-divisible group as in Theorem $\ref{theo_1}$, $C$ is the canonical subgroup, $ha(G) < \min(1/(pe),1/(p+1))$ and if there exists a canonical subgroup for $G/C$.

\begin{rema}
It is not difficult to see that if $G$ does not satisfy Hypothesis $\ref{hypothese}$, then $ha(G)=1$ and there cannot be a canonical subgroup for $G[p]$. We could thus have removed this assumption in the theorem and in the propositions.
\end{rema}

\begin{rema}
The condition $ha(G) < 1/e$ imply that all the partial and primitive Hasse invariants of $G$ are well defined. If $ha(G) < 1/e$ and $C$ is the canonical subgroup of $G[p]$, then we have
$$\deg_i C[\pi]^D + r_i(G) \leq ha_i^{[e]}(G) + \sum_{j=1}^{e-1} ha_{i+1}^{[j]} (G) \leq ha(G) < 1/e \text{.} $$
Thus the partial degrees of $C[\pi]^D$ are well defined too thanks to Proposition $\ref{indep_deg}$.
\end{rema}

\begin{rema}
If $G$ satisfies the Rapoport condition, then we have $m_i^{[j]}(G)=0$ for all $1 \leq i \leq f$ and $2 \leq j \leq e$. Then we have 
$$\deg_i (C[\pi^k] / C[\pi^{k-1}])^D = hasse_i(G)$$
for all $1 \leq i \leq f$ and $1 \leq k \leq e$. Moreover, the element $\deg_i^{[j]} (C[\pi^k] / C[\pi^{k-1}])^D$ is equal to $hasse_i(G)$ if $j=k$ and $0$ otherwise. Let $k$ be an integer between $1$ and $e-1$. The elements $(hasse_i(G / C[\pi^{k}]), m_i^{[j]}(G/C[\pi^k]))$ are all $0$ except the relation $m_i^{[k+1]}(G/C[\pi^k]) = hasse_i(G)$ for all $1 \leq i \leq f$. Note that if $G$ is not ordinary, then $G/C[\pi^k]$ does not satisfy the Rapoport condition for $1 \leq k \leq e-1$. This is consistent with the fact that the $U_{\pi}$ operator on the Hilbert modular variety does not stabilize the Rapoport locus (see \cite{A-G}).
\end{rema}

\begin{rema}
Suppose that $e \geq 2$, that the Hasse invariant of $G$ is small enough, and that there exists a canonical subgroup $C \subset G[p]$. Consider $G'=G/C[\pi]$, it has a subgroup $H_0'=G[\pi]/C[\pi]$. This subgroup has partial degrees 
$$(\deg_i^{[j]} H_0')_{1 \leq j \leq e} = (hasse_i(G), m_i^{[2]}(G), \dots, m_i^{[e]}(G))$$
for all $1 \leq i \leq f$. In $G'' = G / C[\pi^2]$, the image of $H_0'$ is $H_0'' = G'[\pi] / (C[\pi^{2}] / C[\pi])$. It has partial degrees 
$$(\deg_i^{[j]} H_0'')_{1 \leq j \leq e} = (p \cdot m_{i-1}^{[e]}(G), hasse_i(G), m_i^{[2]}(G), \dots, m_i^{[e-1]}(G))$$
for all $1 \leq i \leq f$. This shows that the $U_{\pi}$ operator on the Hilbert modular variety does not increase in general any of the partial degrees.
\end{rema}

\subsection{Proof of the theorem} 

Before proving the theorem, we recall the structure theorem of Raynaud (\cite{Ra}) concerning finite flat group schemes of height $f$ over $O_K$, of $p$-torsion and with an action of $O_{F^{ur}}$.

\begin{prop}
Let $H$ be a finite flat group scheme of height $f$ over $O_K$, of $p$-torsion and with an action of $O_{F^{ur}}$. Then there exists elements $(a_i,b_i)_{1 \leq i \leq f}$ of $O_K$ such that $a_i b_i = p u$ (where $u$ is a fixed $p$-adic unit), with $H$ isomorphic to the spectrum of 
$$O_K[X_1, \dots, X_f] / (X_{i}^p - a_{i+1} X_{i+1}) \text{,} $$
where we identify $X_{f+1}$ and $X_1$. The dual of the group with parameters $(a_i,b_i)$ is the one with parameter $(b_i,a_i)$. Moreover, we have $\omega_{H,i} = O_K / a_i$, and therefore $\deg_i H = v(a_i)$, $\deg_i H^D = v(b_i)$ for all $1 \leq i \leq f$. The Verschiebung map $\omega_{H,i} \to \omega_{H^{(p)},i-1}$ is given by
\begin{equation*}
\begin{aligned}
O_K/ a_i & \to   O_K / (p,a_{i-1}^p) \\
1 & \to  b_i
\end{aligned}
\end{equation*}
for all $1 \leq i \leq f$.
\end{prop}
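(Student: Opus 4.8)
The plan is to identify $H$ as a one-dimensional $\mathbb{F}_q$-vector space scheme, with $q = p^f$, and to invoke Raynaud's classification of such objects, reading off all the stated data from the resulting presentation. First I would observe that, since $H$ is killed by $p$ and carries an action of $O_{F^{ur}}$, this action factors through the residue field $O_{F^{ur}}/p = \mathbb{F}_q$; combined with the assumption that $H$ has height $f$, so that $|H| = p^f = q$, this exhibits $H$ as a finite flat $\mathbb{F}_q$-module scheme whose $\mathbb{F}_q$-rank is $1$. This is exactly the class of group schemes classified by Raynaud in \cite{Ra}, generalizing Oort--Tate.

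The structural input I would import from \cite{Ra} is the following. Write $A = \mathcal{O}(H)$ for the Hopf algebra and $I \subset A$ for its augmentation ideal. The Teichm\"uller lifts $\mu_{q-1} \subset O_{F^{ur}}^\times \subset O_K^\times$ act on $I$, and hence on $\omega_H = I/I^2$, giving an eigenspace decomposition indexed by the characters of $\mathbb{F}_q^\times$. The $f$ fundamental characters are the reductions $\bar\tau_1, \dots, \bar\tau_f$ of the embeddings, and Raynaud's theorem shows that the corresponding graded pieces of $\omega_H$ are invertible; over the local ring $O_K$ they are therefore free of rank one, with generators $X_i$ lying in the $\tau_i$-eigenspace. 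This recovers the decomposition $\omega_H = \oplus_i \omega_{H,i}$ and identifies the $X_i$ with the coordinate functions in the presentation.

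Next I would pin down the algebra relations using the Frobenius. Raising $X_i$ to the $p$-th power twists its character by the $p$-power map, and since $\bar\tau_{i+1}(x) = \bar\tau_i(x)^p$ (because $\tau_{i+1} = \sigma \tau_i$), the element $X_i^p$ lies in the $\tau_{i+1}$-eigenline; after Raynaud's normalization this gives the relation $X_i^p = a_{i+1} X_{i+1}$, cyclically in $i$. Reading this relation in $\omega_H = I/I^2$, where $X_i^p \equiv 0$, yields $a_i X_i = 0$, hence $\omega_{H,i} = O_K/a_i$ and $\deg_i H = v(a_i)$. The classification further provides the second family $b_i$ governing the comultiplication, with $a_i b_i$ equal to the universal constant of valuation one, namely $pu$; this is compatible with $H$ being $p$-torsion, since on $\omega_{H,i} = O_K/a_i$ multiplication by $p = a_i b_i / u$ is indeed zero. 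The explicit form of the comultiplication then identifies the Verschiebung map $\omega_{H,i} \to \omega_{H^{(p)},i-1}$ with $1 \mapsto b_i$. Finally, Cartier duality interchanges Frobenius and Verschiebung, hence interchanges the two families of parameters, so that the dual of the group with parameters $(a_i, b_i)$ has parameters $(b_i, a_i)$; in particular $\omega_{H^D,i} = O_K/b_i$ and $\deg_i H^D = v(b_i)$.

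The genuine difficulty is concentrated in the second step: establishing that the $\tau_i$-graded pieces of $\omega_H$ are cyclic and that the entire Hopf algebra, comultiplication included, is determined by the two cyclically linked families $(a_i)$ and $(b_i)$ with $a_i b_i = pu$. This is precisely the content of Raynaud's classification of $\mathbb{F}_q$-vector space schemes, resting on his theory of fundamental characters and the associated duality, and I would cite \cite{Ra} for it rather than reproduce it. Everything else --- the identification of $\omega_{H,i}$, the partial degrees, the formula for Verschiebung, and the behaviour under duality --- is then a direct computation inside the explicit presentation.
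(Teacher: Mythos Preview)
Your proposal is correct and matches the paper's approach: the paper does not actually prove this proposition but simply recalls it as Raynaud's structure theorem, citing \cite{Ra}, with the sole additional remark that the hypothesis $(\star\star)$ of \cite{Ra} is automatic because the base is $O_K$. Your write-up is in fact more detailed than the paper's, spelling out why $H$ is an $\mathbb{F}_q$-vector space scheme of rank one and how each claimed datum (the presentation, $\omega_{H,i}$, the partial degrees, Verschiebung, duality) is read off from Raynaud's classification; you might only add a word acknowledging that Raynaud's condition $(\star\star)$ holds over the discrete valuation ring $O_K$.
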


We will refer to such group schemes as Raynaud group schemes.

\begin{rema}
Since the group is defined over $O_K$, the condition $(\star \star)$ of \cite{Ra} is automatically satisfied.
\end{rema}

We now turn to the proof of Theorem $\ref{theo_1}$ and of Propositions $\ref{prop_1}$ and $\ref{prop_2}$. Let $G$ and $C$ be as in the theorem. Let us write $w=ha(G)$. We have an exact sequence
$$0 \to \omega_{G[p]/C} \to \omega_{G,\{1\}} \to \omega_C \to 0 \text{.} $$
But the degree of $G[p] / C$ is $w$, thus we have an isomorphism
$$\omega_{G,\{1 -w\}} \simeq \omega_{C,\{1-w\}} \text{.} $$
It gives isomorphisms $\omega_{G,i,\{1 -w\}} \simeq \omega_{C,i,\{1-w\}}$ for all $1 \leq i \leq f$. The space $C(O_K)$ is a $\mathbb{F}_q$-vector space, with a map $[\pi]$ such that $[\pi]^e = 0$. Let us give a filtration on this vector space; it gives a filtration $H_1 \subset \dots \subset  H_{de} = C$, such that each $H_{k+1} / H_k$ is a Raynaud group scheme. Moreover, one can do it in a way that $H_{kd} = C[\pi^k]$ for all $1 \leq k \leq e$. This gives a filtration
$$0 \subset \omega_{C/H_{de-1}} \subset \dots \subset \omega_{C/H_1} \subset \omega_C \text{.} $$
Let $\omega_{C/H_k,i,\{1 - w\}}$ be the image of $\omega_{C/H_k}$ in $\omega_{C,i,\{1 - w\}}$. We have thus filtered the free $O_{K,\{1-w\}}$-module of rank $de$ $\omega_{C,i,\{1-w\}}$ by $de$ submodules, such that each of the graded pieces is monogenous (since $H_{k+1}/H_k$ is a Raynaud group scheme, the quotient $\omega_{C/H_k,i} / \omega_{C/H_{k+1},i} \simeq \omega_{H_{k+1}/H_k,i}$ is monogenous). This forces each module $\omega_{C/H_k,i,\{1 - w\}}$ to be free of rank $de-k$ over $O_{K,\{1-w\}}$. We have filtered $\omega_{C,i,\{1-w\}}$ by free $O_{K,\{1-w\}}$ submodule, and the Verschiebung acts on the graded pieces by the multiplication by an element of valuation $\deg_i (H_k / H_{k-1})^D$.  \\
\indent We will first compute the unramified partial degrees of $C$. When one takes the valuation of the determinant of the Verschiebung acting on $\omega_{C,i,\{1-w\}}$, one gets the following equality in $[0,1-w]$
$$ha_i(G) = \sum_{k=1}^{de} \deg_i (H_k / H_{k-1})^D= \deg_i C^D$$
for $1 \leq i \leq f$. Since $ha_i(G) \leq w < 1-w$, this relation is simply an equality. This settles the first assertion of Theorem $\ref{theo_1}$ in the case $e=1$. \\
\indent We now assume $e \geq 2$. The filtration
$$0 \subset \omega_{C/C[\pi^{e-1}],i,\{1/e\}} \subset \dots \subset \omega_{C/C[\pi],i,\{1/e\}} \subset \omega_{C,i,\{1/e\}}$$
is thus an adequate filtration of $\omega_{G,i,\{1/e\}} \simeq \omega_{C,i,\{1/e\}}$ for all $1 \leq i \leq f$. Moreover, from the result on Raynaud group schemes, the determinant of the map
$$V : \omega_{C/C[\pi^{k-1}],i,\{1/e\}} / \omega_{C/C[\pi^{k}],i,\{1/e\}} \to (\omega_{C/C[\pi^{k-1}],i-1,\{1/e\}} / \omega_{C/C[\pi^{k}],i-1,\{1/e\}})^{(p)}$$
has a determinant with valuation equal to $\deg_i (C[\pi^k] / C[\pi^{k-1}])^D$ for $1 \leq i \leq f$ and $1 \leq k \leq e$. From Propositions $\ref{indep_h}$ and $\ref{compute_hasse}$, one gets
$$ha_i^{[e+1-k]}(G) = \deg_i (C[\pi^k] / C[\pi^{k-1}])^D$$
for all $1 \leq i \leq f$ and $1 \leq k \leq e$. This concludes the proof of the first assertion of Theorem $\ref{theo_1}$, and the first part of Proposition $\ref{prop_1}$. \\
$ $\\
\indent Now let us turn to the computation of the partial Hasse invariants of the $p$-divisible groups $G/C[\pi^k]$, for $1 \leq k \leq e$. The $p$-divisible group $(G/C) \times_{O_K} O_{K,\{1-w\}}$ is isomorphic to $(G \times_{O_K} O_{K,\{1-w\}})^{(p)}$, where the subscript means a twist by the Frobenius. First we suppose that and $w < 1/(p+1)$. We have the following equality in $[0,1-w]$
$$ha_i(G/C) = p \cdot ha_{i-1} ( G)$$
for $1 \leq i \leq f$. Since $p \cdot ha_{i-1} (G) \leq pw < 1-w$, this is just an equality. This proves the first assertion of Theorem $\ref{theo_1}$ in the case $e=1$. \\
\indent Now we suppose that $e \geq 2$ and $w < 1/(pe)$. Since the element $p \cdot w$ is strictly less than $1/e$, we have
$$ha_i^{[j]} (G/C) = p \cdot ha_{i-1}^{[j]} (G)$$
for all $1 \leq i \leq f$ and $1 \leq j \leq e$. This gives the result for $G/C$. Now assume the existence of a canonical subgroup for $G/C$. We will write $C_2 \subset G[p^2]$ the subgroup such that $C_2/C$ is the canonical subgroup of $G/C$. Since $ha(G/C) < 1/e$, we can apply our previous result to this $p$-divisible group. We note that $(C_2/C)[\pi^k] = C_2[\pi^{k+e}]/C$, and get
$$\deg_i (C_2 [\pi^{e+k}]/ C_2[\pi^{e+k-1}])^D = p \cdot ha_{i-1}^{[e+1-k]}(G)$$
for all $1 \leq i \leq f$ and $1 \leq k \leq e$. Next, we consider the $p$-divisible group $G/C[\pi^k]$. One easily checks that it as a canonical subgroup equal to $C_2[\pi^{k+e}] / C[\pi^k]$. Applying our previous result to this $p$-divisible group, one gets
$$ha_i^{[j]} ( G/C[\pi^k]) = \deg_i (C_2[\pi^{k+e+1-j}] / C_2[\pi^{k+e-j}])^D$$
for all $1 \leq i \leq f$, $1 \leq j \leq e$ and $1 \leq k \leq e$. Putting all these relations together, we conclude that
$$(ha_i^{[j]} ( G/C[\pi^k]))_{1 \leq j \leq e} = ( p \cdot ha_{i-1}^{[e+1-k]}(G), \dots, p \cdot ha_{i-1}^{[e]}(G), ha_i^{[1]}(G), \dots, ha_i^{[e-k]}(G))$$  
for $1 \leq i \leq f$ and $1 \leq k \leq e$. This proves the third assertion of Theorem $\ref{theo_1}$ and Proposition $\ref{prop_2}$. \\
$ $\\
\indent We will now compute the partial degrees of $C[\pi]^D$. We will prove that $\deg_i^{[j]} C[\pi]^D = m_i^{[j]}(G)$ for all $1 \leq i \leq f$ and $2 \leq j \leq e$. Since we have already computed the unramified partial degree $\deg_i C[\pi]^D = ha_i^{[e]}(G)$, this will imply that $\deg_i^{[1]} C[\pi]^D = hasse_i(G)$. \\
Let us fix an integer $i$ between $1$ and $f$, and let us denote by $\nu_{C[\pi]^D,i}$ the cokernel of the map $\omega_{(G/C[\pi])^D,i}^\vee \to \omega_{G^D,i}^\vee$. We thus have an exact sequence
$$0 \to \omega_{(G/C[\pi])^D,i}^\vee \to \omega_{G^D,i}^\vee \to \nu_{C[\pi]^D,i} \to 0 \text{.} $$
Since $\deg_i C[\pi]^D = ha_i^{[e]}(G) < 1/e$, we have an exact sequence
$$\omega_{(G/C[\pi])^D,i,\{1/e\}}^\vee \to \omega_{G^D,i,\{1/e\}}^\vee \to \nu_{C[\pi]^D,i} \to 0 \text{.} $$
On $\omega_{G^D,i,\{1/e\}}^\vee$, the action of $[\pi]$ is given by a matrix of the form
\begin{displaymath}
\left(
\begin{array}{cccc}
0 & M_i^{[e]} & \dots & * \\
& 0 & \ddots & * \\
& & \ddots  & M_i^{[2]} \\
& & & 0 
\end{array}
\right) \text{,} 
\end{displaymath}
where this matrix is written in a basis respecting the filtration $(\omega_{G^D,i,\{1/e\}} / \omega_{G^D,i,\{1/e\}}^{[j]})^\vee$, and the determinant of the matrix $M_i^{[j]}$ has valuation $m_i^{[j]}(G)$ for all $2 \leq j \leq e$. Here, all the blocks are of size $h-d$. By a slight abuse of notation, we will still denote by $\omega_{(G/C[\pi])^D,i,\{1/e\}}^\vee$ the image of this module in $\omega_{G^D,i,\{1/e\}}^\vee$, and will work with this module from now on. This module contains $[\pi] \cdot \omega_{G^D,i,\{1/e\}}^\vee$, and the image of this module in the quotient $\omega_{G^D,i,\{1/e\}}^\vee /  [\pi] \cdot \omega_{G^D,i,\{1/e\}}^\vee$ is generated by $h-d$ elements (since the height of $G[\pi]/C[\pi]$ is $f(h-d)$). We will write the matrix of these $h-d$ elements by
\begin{displaymath}
Y = \left(
\begin{array}{c}
Y_1 \\
\vdots \\
Y_e
\end{array}
\right)  \text{.} 
\end{displaymath}
Since the module $\nu_{C[\pi]^D,i}$ contains as a quotient a module isomorphic to $O_{K,\{1/e\}}^{h-d} / Y_e O_{K,\{1/e\}}^{h-d}$, we see that $v(\det Y_e) \leq \deg_i C[\pi]^D =ha_i^{[e]}(G)$. We now claim that the intersection of $\omega_{(G/C[\pi])^D,i,\{1/e\}}^\vee$ with the first step of the filtration $(\omega_{G^D,i,\{1/e\}} / \omega_{G^D,i,\{1/e\}}^{[e-1]})^\vee$ is generated by the image of the matrix $M_i^{[e]}$. Indeed, let us write by $V_2, \dots, V_e$ the (non zero) columns of the matrix of $[\pi]$. An element $X$ in $\omega_{(G/C[\pi])^D,i,\{1/e\}}^\vee$ can then be written as a linear combination
$$X = V_2 \alpha_1 + \dots + V_e \alpha_{e-1} + Y \alpha_e \text{,} $$
for some $(h-d) \times 1$ columns $\alpha_i$. If $X$ is in the first step of the filtration, then one sees that $Y_e \alpha_e = 0$ in $O_{K,\{1/e\}}^d$. This implies that the elements of $\alpha_e$ have a valuation greater than $1/e - ha_i^{[e]}(G)$. We then get the relation $M_i^{[2]} \alpha_{e-1} = 0 $ in $O_{K,\{1/e - ha_i^{[e]}(G)\}}^d$. The valuations of the elements of $\alpha_{e-1}$ are thus greater than $1/e - ha_i^{[e]}(G) - m_i^{[2]}(G)$. By induction, one sees that the coefficients of $\alpha_2, \dots, \alpha_e$ have all a valuation greater than $1/e - ha_i^{[e]}(G) - m_i^{[2]}(G) - \dots - m_i^{[e-1]}(G)$. This concludes the claim, with the hypothesis
$$ha_i^{[e]}(G) + \sum_{j=2}^{e} m_i^{[j]}(G) < 1/e \text{.} $$
This hypothesis guarantees that $\deg_i^{[e]} C[\pi]^D = m_i^{[e]}(G)$. Reasoning by induction, considering the module $(\omega_{G^D,i,\{1/e\}}^{[e-1]})^\vee$, one gets that under the same hypothesis 
$$\deg_i^{[j]} C[\pi]^D = m_i^{[j]}(G)$$
for all $2 \leq j \leq e$. This concludes the second assertion of Theorem $\ref{theo_1}$ since the hypothesis is implied by the relation $ha(G) < 1/e$. Indeed, if $e \geq 2$, one has
$$ha_i^{[e]}(G) + \sum_{j=2}^{e} m_i^{[j]}(G) \leq 2 ha_i^{[e]}(G) \leq 2 ha^{[e]}(G) \leq ha^{[e]}(G) + ha^{[e-1]}(G) \leq ha(G)$$
for all $1 \leq i \leq f$. \\
$ $\\
\indent To conclude, it remains to prove the second part of Proposition $\ref{prop_1}$, i.e. to compute the partial degrees of $(C[\pi^{k+1}]/C[\pi^{k}])^D$ for $1 \leq k \leq e-1$ with the assumption that $ha(G) < 1/(pe)$ and the existence of a canonical subgroup for $G/C$. We want to apply our previous result to the $p$-divisible group $G/C[\pi^k]$. For this we need the hypothesis
$$ha_i^{[e]}(G/C[\pi^k]) + \sum_{j=2}^{e} m_i^{[j]}(G/C[\pi^k]) < 1/e$$
for all $1 \leq i \leq f$ and $1 \leq k \leq e-1$. But from the computation on the primitive Hasse invariants of $G/C[\pi^k]$, we have
\begin{equation*}
\begin{aligned}
ha_i^{[e]}(G/C[\pi^k]) + \sum_{j=2}^{e} m_i^{[j]}(G/C[\pi^k]) & = ha_i^{[e-k]}(G) + p \sum_{j=e-k+2}^e m_{i-1}^{[j]} (G) + hasse_i(G) + \sum_{j=2}^{e-k} m_i^{[j]}(G)  \\
& \leq 2 ha_i^{[e-k]}(G) <2/(pe) \leq 1/e 
\end{aligned}
\end{equation*}
for all $1 \leq i \leq f$ and $1 \leq k \leq e-1$. This concludes the proof of Theorem $\ref{theo_1}$ and Propositions $\ref{prop_1}$ and $\ref{prop_2}$.

\begin{rema}
Incidentally, we have proved that if $H$ is a finite flat subgroup of $G[\pi]$ of height $f(h-d)$ such that 
$$\deg_i H + \sum_{j=2}^e m_i^{[j]}(G) < 1/e$$
for some integer $i$, then $\deg_i^{[j]} H = m_i^{[j+1]}(G)$ for all $1 \leq j \leq e-1$.
\end{rema}

\bibliographystyle{amsalpha}

\end{document}